\newtheorem{theorem}{Theorem}[section]
\newtheorem{lemma}[theorem]{Lemma}
\numberwithin{equation}{section}
\theoremstyle{definition}
\theoremstyle{remark}
\newcommand{\brac}[1]{\left(#1\right)}
\newcommand{\brab}[1]{\left\{#1\right\}}
\newcommand{\bk}{{\boldsymbol{k}}}
\newcommand{\bm}{{\boldsymbol{m}}}
\newcommand{\bu}{{\boldsymbol{u}}}
\newcommand{\bv}{{\boldsymbol{v}}}
\newcommand{\bp}{{\boldsymbol{p}}}
\newcommand{\bq}{{\boldsymbol{q}}}
\newcommand{\br}{{\boldsymbol{r}}}
\newcommand{\bs}{{\boldsymbol{s}}}
\newcommand{\bx}{{\boldsymbol{x}}}
\newcommand{\by}{{\boldsymbol{y}}}
\newcommand{\bW}{{\boldsymbol{W}}}
\newcommand{\bone}{{\boldsymbol{1}}}
\newcommand{\bvarphi}{{\boldsymbol{\varphi}}}
\newcommand{\bxi}{{\boldsymbol{m}}}
\newcommand{\rd}{{\rm d}}
\def\ZZd{{\mathbb Z}^d}
\def\ZZ{{\mathbb Z}}
\def\RR{{\mathbb R}}
\def\RRd{{\mathbb R}^d}
\def\NN{{\mathbb N}}
\def\NNd{{\NN}^d}
\def\NN{{\mathbb N}}
\def\RR{{\mathbb R}}
\def\Bb{{\mathcal B}}
\def\NNd{{\mathbb N}^d}
\def\RRd{{\mathbb R}^d}
\def\TTd{{\mathbb T}^d}
\def\ZZd{{\mathbb Z}^d}
\def\Bb{{\mathcal B}}
\def\Ii{{\mathcal I}}
\def\Pp{{\mathcal P}}
\def\Ss{{\mathcal S}}
\def\ZZ{{\mathbb Z}}
\def\NN{{\mathbb N}}
\def\RR{{\mathbb R}}
\def\NNd{{\mathbb N}^d}
\def\RRd{{\mathbb R}^d}
\def\TTd{{\mathbb T}^d}
\def\supp{\operatorname{supp}}
\def\sign{\operatorname{sign}}
\def\Wap{W^r_p}
\newcommand{\norm}[2]{\left\|{#1}\right\|_{#2}}
\title{\sffamily Weighted sampling recovery  of functions \\  with   mixed smoothness} %on unbounded domain} 
\author[a]{Dinh D\~ung}
\affil[a]{Information Technology Institute, Vietnam National University
	\protect\\
	144 Xuan Thuy, Cau Giay, Hanoi, Vietnam
	\protect\\
	Email: dinhzung@gmail.com}
\date{\today}
\begin{document}
\maketitle

\begin{abstract}
 We studied  linear weighted sampling  algorithms  and their optimality for approximate  recovery of   functions with mixed smoothness  on $\mathbb{R}^d$ from a  set of $n$ their sampled values. Functions to be recovered are in weighted Sobolev spaces $W^r_{p,w}(\mathbb{R}^d)$ of mixed smoothness, and the approximation error is measured by the norm of the weighted Lebesgue space $L_{q,w}(\mathbb{R}^d)$. Here, the weight $w$ is a tensor-product  Freud-type weight. The optimality of linear sampling algorithms is investigated in terms of  sampling $n$-widths. We constructed  linear sampling algorithms on  sparse grids of  sampled points  which form a step hyperbolic cross in the function domain, and which give upper bounds for the corresponding sampling $n$-widths. We proved that in the one-dimensional case, these algorithms realize the exact convergence rate of the $n$-sampling widths.

	\medskip
	\noindent
	{\bf Keywords and Phrases}:  Linear sampling  recovery; Sampling widths; Weighted  Sobolev space of mixed smoothness;  Sparse hyperbolic cross grids in function domain;  Convergence rate. 
	
	\medskip
	\noindent
	{\bf MSC (2020)}:    41A46; 41A25; 41A63; 41A81; 65D05.
	
\end{abstract}

\section{Introduction}
\label{Introduction}

% The purpose of the present paper is to investigate sampling algorithms for   approximate recovery of   functions in weighted  spaces $W^r_{p,w}(\mathbb{R}^d)$ and spaces with measure $W^r_p(\mathbb{R}^d; \mu_w)$ of  mixed Sobolev smoothness $r \in \mathbb{N}$. The approximate recovery of functions is based on a finite number of their sampled values. The approximation error is measured by the norm of the weighted Lebesgue space $L_{q,w}(\mathbb{R}^d)$ and the Lebesgue space with measure $L_q(\mathbb{R}^d; \mu_w)$, respectively.  Here,  $w$ is a tensor-product Freud-type weight, and the parameters  $p,q \in [1,\infty]$ may take different values. The optimality of sampling algorithms is studied in terms of sampling $n$-widths of the unit ball $\boldsymbol{W}^r_{p,w}(\mathbb{R}^d)$  and $\boldsymbol{W}^r_p(\mathbb{R}^d; \mu_w)$ in these spaces.  
%
%

We begin with definitions of weighted  function spaces.  
Let 
\begin{equation} \label{w(bx)d}
	w(\bx):= w_{\lambda,\tau,\eta,a,b}(\bx) := \bigotimes_{i=1}^d w(x_i), \ \ \bx \in \RRd,
\end{equation}
be the tensor product of $d$ copies of the generating univariate Freud-type  weight
\begin{equation} \label{w(x)}
	w(x):= 	|x|^\tau (1 + |x|)^\eta \exp \brac{- a|x|^\lambda + b},
	 \ \ \lambda > 1,  \ a >0, \ \tau, \eta \ge 0, \ b \in \RR.
\end{equation} 
The most important parameters in the weight $w$ are $\lambda$ and $\tau$. The parameter $b$ which produces only a positive constant in the weight $w$  is introduced for a certain normalization for instance, for the standard Gaussian weight which is one of the most important weights. The weight $w$ has a singularity at $0$ if $\tau > 0$.
In what follows,  we fix the weight $w$ and hence the  parameters $\lambda,\tau, \eta, a, b$.

Let  $1\leq q<\infty$ and $\Omega$ be a Lebesgue measurable set on $\RRd$. 
We denote by  $L_{q,w}(\Omega)$ the weighted Lebesgue space  of all measurable functions $f$ on $\Omega$ such that the norm
\begin{align} \label{L-Omega}
\|f\|_{L_{q,w}(\Omega)} : = 
\bigg( \int_\Omega |f(\bx)w(\bx)|^q  \rd \bx\bigg)^{1/q}
\end{align}
is finite.

Put $\RRd_\tau = \RRd$ if $\tau = 0$, and $\RRd_\tau = (\RR \setminus \brab{0})^d$ if $\tau > 0$, where $\tau$ is the parameter in the definition \eqref{w(x)} of the generating univariate weight $w$. 
For $q=\infty$ and $\Omega = \RRd$, we define  the space $L_{\infty,w}(\RRd):=C_w(\RRd)$ of all measurable functions on $\RRd$ such that
\begin{equation*}
 f \in C(\RRd_\tau), \ \lim_{|x_j| \to \infty} f(\bx)w(\bx) = 0, \, j=1,...,d, 
\end{equation*}
for $\tau = 0$, and 
\begin{equation*}
 f \in C(\RRd_\tau), \
		\lim_{|x_j| \to \infty} f(\bx)w(\bx) = \lim_{x_j\to 0} f(\bx)w(\bx) = 0, \, j =1,...,d,
\end{equation*}
for $\tau > 0$
The norm in $L_{\infty,w}(\RRd)$ is defined by
\begin{equation*}
	\norm{f}{L_{\infty,w}(\RRd)}:=  \sup_{\bx\in \RRd_\tau} 
	| f(\bx)w(\bx)|. 
\end{equation*}

 For $r \in \NN$ and $1 \le p \le \infty$, the weighted  Sobolev space $W^r_{p,w}(\Omega)$ of mixed smoothness $r$  is defined as the normed space of all functions $f\in L_{p,w}(\Omega)$ such that the weak  partial derivative $D^{\bk} f$ belongs to $L_{p,w}(\Omega)$ for  every $\bk \in \NNd_0$ satisfying the inequality $|\bk|_\infty \le r$. The norm of a  function $f$ in this space is defined by
\begin{align} \label{W-Omega}
	\|f\|_{W^r_{p,w}(\Omega)}: = \Bigg(\sum_{|\bk|_\infty \le r} \|D^{\bk} f\|_{L_{p,w}(\Omega)}^p\Bigg)^{1/p}.
\end{align}
%For convenience, we adopt the convention $W^0_{p,w}(\Omega):= L_{p,w}(\Omega)$.  

Let $\gamma$ be the standard $d$-dimensional Gaussian measure  with the density function 
$$
w_{\operatorname{g}}(\bx): = (2\pi)^{-d/2}\exp (- |\bx|^2/2).
$$
The well-known  spaces $L_p(\Omega;\gamma)$ and $\Wap(\Omega; \gamma)$ 
which are used in many applications, are defined  in the same way by replacing the norm \eqref{L-Omega} with the norm
$$
\|f\|_{L_p(\Omega; \gamma)} : = 
\bigg( \int_\Omega |f(\bx)|^p \gamma(\rd \bx)\bigg)^{1/p}
=
\bigg( \int_\Omega |f(\bx)\brac{w_{\operatorname{g}}}^{1/p}(\bx)|^p  \rd \bx\bigg)^{1/p}.
$$
Thus, the spaces $L_p(\Omega;\gamma)$ and $\Wap(\Omega; \gamma)$ coincide with  $L_{p,w}(\Omega)$ and $W^r_{p,w}(\Omega)$, where $w:= \brac{w_{\operatorname{g}}}^{1/p}$
for  a fixed $1 \le p < \infty$.  Throughout this paper, we use $\bW^r_{p,w}(\Omega)$ and $\bW^r_p(\Omega;\gamma)$ to denote the unit ball in the spaces $W^r_{p,w}(\Omega)$ and $W^r_p(\Omega;\gamma)$, respectively.
%For a normed space $X$ of functions on $\Omega$, the boldface $\bX$ denotes the unit ball in $X$. 

Let us formulate a setting of  linear sampling recovery problem. Let  $X$ be  a normed space of functions on $\Omega$. Given sample points $\bx_1,\ldots,\bx_k \in \Omega$, we consider the approximate recovery  of a continuous function $f$ on $\Omega$  from their values $f(\bx_1),\ldots, f(\bx_k)$ by a linear sampling algorithm $S_k$
on $\Omega$ of the form
\begin{equation} \label{S_k}
	S_k(f): = \sum_{i=1}^k  f(\bx_i) h_i, 
\end{equation}
where  $h_1,\ldots,h_k$ are given continuous functions on $\Omega$.  For convenience, we assume that some of the sample points $\bx_i$ may coincide. The approximation error is measured by the norm 
$\|f - S_k (f)\|_X$.  Denote by $\Ss_n$  the family of all linear sampling algorithms $S_k$ of the form \eqref{S_k} with $k \le n$.
Let $F \subset X$ be a set of continuous functions on $\Omega$.   To study the optimality  of linear  sampling algorithms  from $\Ss_n$ for  $F$ and their convergence rates we use  the  (linear) sampling $n$-width
\begin{equation} \label{rho_n}
\varrho_n(F, X) :=\inf_{S_n \in \Ss_n} \ \sup_{f\in F} 
\|f - S_n (f)\|_X.%{L_{q,w}(\RRd)}.
\end{equation}

Notice that any function $f \in W^r_{p,w}(\RRd)$ is equivalent  in the sense of the Lesbegue measure to a continuous (not necessarily bounded) function on $\RRd_\tau$ (see \cite[Lemma 3.1]{DD2023} for $\tau = 0$; in the case $\tau >0$, this equivalence can be proven similarly). Hence throughout the present paper,  we always assume that $W^r_{p,w}(\RRd) \subset C_w(\RRd)$, and that  $W^r_{p,w}(\RRd)$ coincides with $W^r_{p,w}(\RRd_\tau)$ in the sense of the Lebesgue measure.  Therefore, linear  sampling algorithms of the form \eqref{S_k} with $\Omega = \RRd_\tau$ are well-defined for functions $f \in W^r_{p,w}(\RRd)$. 

There is a large number of works devoted to the problem of (unweighted) linear sampling recovery of functions having a mixed smoothness on a compact domain. 
Smolyak sparse-grid sampling algorithms for compact domains  (cube or torus) have been widely and efficiently used in both approximation theory and numerical analysis, especially, in sampling recovery for functions having a mixed smoothness.   The reader can consult \cite{BG2004,DD2023-survey,DTU18B,NoWo10,Tem18B} for a survey and bibliography. 
The optimal convergence rate of sampling $n$-widths  is one of central problems in sampling recovery of functions having a mixed smoothness.  We refer the reader to \cite{DD2023-survey,DTU18B} for  a survey and bibliography on the optimal  convergence rate of  the sampling $n$-widths $\varrho_n\big(\bW^r_p(\TTd), L_q(\TTd)\big)$, where $\bW^r_p(\TTd)$ is the unit ball in the Sobolev space $W^r_p(\TTd)$ of functions on the $d$-dimensional torus $\TTd$ having mixed smoothness $r$. 
It is noteworthy that the recent results of \cite{DKU22} on  inequality between the sampling and Kolmogorov $n$-widths of the unit ball in a reproducing kernel Hilbert subspace immediately yield in a nonconstructive way the exact convergence rate of
 $\varrho_n(\bW^r_2(\TTd),L_2(\TTd))$. This solved the outstanding Open Problem 4.1 in \cite{DTU18B} for the particular case $p=2$.

 The problem of optimal sampling recovery of functions on $\RRd$ equipped with Gaussian measure has been investigated in \cite{DK2022}. We proved in a non-constructive way the exact convergence rate  of the  sampling $n$-widths 
 \begin{equation} 	\label{AsympSampling-p=2}
 	\varrho_n\big(\bW^r_p(\RRd; \gamma), L_2(\RRd; \gamma)\big) 
 	\asymp
 	n^{-r} (\log n)^{r(d-1)}
 \end{equation}
 for $2 < p \le \infty$,  and
 \begin{equation} 	\label{AsympSampling-p=2}
 	\varrho_n\big(\bW^r_2(\RRd; \gamma), L_2(\RRd; \gamma)\big) 
 	\asymp
 	n^{-r/2} (\log n)^{r(d-1)/2}
 \end{equation}
 which is obtained  by using  the inequalities between  sampling widths and Kolmogorov widths~\cite{DKU22} mentioned above, and the exact convergence rate of $d_n\big(\bW^r_p(\RRd; \gamma), L_q(\RRd; \gamma)\big)$ proven in the same paper \cite{DK2022}.  
A related problem on hyperbolic cross weighted polynomial approximation based on  based on de la Vall\'ee Poussin sums of the relevant orthonormal polynomial expansion, has been treated in \cite{DD2024}.
 
% which is obtained in a non-constructive way by using  an inequality between  sampling widths and Kolmogorov widths of the unit ball of  a separable reproducing Hilbert space (RKHS) \cite{DKU22}, the RKHS structure of the space $W^r_2(\RRd; \gamma)$ and the right convergence rate of $d_n\big(\bW^r_2(\RRd; \gamma), L_2(\RRd; \gamma)\big)$ proven in the same paper \cite{DK2022}.
In the present paper, we are interested in constructing  sparse-grid linear sampling  algorithms  for approximate  recovery of   functions with mixed smoothness  on $\mathbb{R}^d$ in the following setting: 
  functions to be recovered are in weighted Sobolev spaces $W^r_{p,w}(\mathbb{R}^d)$ of mixed smoothness $r$, and the approximation error is measured by the norm of the weighted Lebesgue space $L_{q,w}(\mathbb{R}^d)$.
The parameters $p$ and $q$ may be different and range in $[1, \infty]$. The asymptotic  optimality of linear sampling algorithms is investigated in terms of  the relevant sampling widths. We construct sparse-grid sampling algorithms which give upper bounds of the  sampling $n$-widths 
$\varrho_n(\boldsymbol{W}^r_{p,w}(\mathbb{R}^d), L_{q,w}(\RRd))$. In, the case $d=1$, these algorithms will realize the exact convergence rate of the sampling widths. 
In particular, we develop a counterpart  of  Smolyak sparse-grid sampling algorithms  which realize the upper bounds.
 
We briefly describe the results of the present paper.  Before doing this we emphasize that  our setting of problem of optimal weighted sampling recovery  naturally comes from the classical theory of weighted approximation (for knowledge and bibliography see, e.g., \cite{Mha1996B}, \cite{Lu07B},  \cite{JMN2021}).

Throughout the present paper, for given $p, q \in [1, \infty]$ and the parameter $\lambda > 1$ in the definition \eqref{w(x)} of the generating univariate weight $w$,  we make use of  the notations
$$
r_\lambda:= (1 - 1/\lambda)r, \quad r_{\lambda,p,q}:= r_\lambda - 	\delta_{\lambda,p,q},
$$ 
and (with the convention $1/\infty := 0$) 
\begin{equation}\label{delta_{lambda,p,q}}
	\delta_{\lambda,p,q} 
	:=
	\begin{cases}
		(1-1/\lambda)(1/p - 1/q)&  \ \ \text{if} \ \ p \le q, \\
		(1/\lambda)(1/q - 1/p)&  \ \ \text{if} \ \ p > q.
	\end{cases}	 
\end{equation}

Let  $1<p <\infty$, $1\le q \le \infty$ and $r_{\lambda,p,q} >0$. 	Then  with some restrictions on the weight $w$ and $p,q$, we prove that 
\begin{equation}\label{UpperB-introduction}
	\varrho_n(\bW^r_{p,w}(\RRd), L_{q,w}(\RRd)) 
	\ll 
	\begin{cases}
		n^{-r_{\lambda}} (\log n)^{(r_{\lambda} + 1)(d-1)} &  \ \ \text{if} \ \ p = q, \\
		n^{-r_{\lambda,p,q}} (\log n)^{(r_{\lambda,p,q} + 1/q)(d-1)} &  \ \ \text{if} \ \ p \not= q<\infty,
	\\
	n^{-r_{\lambda,p,q}} (\log n)^{(r_{\lambda,p,q} + 1)(d-1)} &  \ \ \text{if} \ \ p \not= q=\infty;
	\end{cases}	 
\end{equation}
and
\begin{equation}\label{LowerB-Introduction}
	\varrho_n(\bW^r_{p,w}(\RRd), L_{q,w}(\RRd)) 
	\gg
	\begin{cases}
		n^{-r_{\lambda}} (\log n)^{r_{\lambda}(d-1)} &  \ \ \text{if} \ \ p = q, \\
		n^{-r_{\lambda,p,q}} (\log n)^{r_{\lambda,p,q}(d-1)} &  \ \ \text{if} \ \ p < q, \\
		n^{-r_{\lambda,p,q}} &  \ \ \text{if} \ \ p > q.
	\end{cases}	 
\end{equation}
In  the one-dimensional  case, we prove the exact convergence rate
\begin{equation}\label{rho_n(W)-introduction-d=1}
	\varrho_n(\bW^r_{p,w}(\RR),  L_{q,w}(\RR)) 
	\asymp 
	n^{-r_{\lambda,p,q}}.
\end{equation}
We also obtain upper and lower bound of  $\varrho_n(\bW^r_{\infty,w}(\RRd), L_{q,w}(\RRd)) $	in the case when $1\le q \le \infty$ and $r_{\lambda,\infty,q} >0$: 
	\begin{equation}\label{rho_n-d=1,p=infty-Introduction}
		n^{- r_{\lambda,\infty,q}}
		\ll
		\varrho_n\big(\bW^r_{\infty,w}(\RR),  L_{q,w}(\RR)\big) 
		\ll
		n^{- r_{\lambda,\infty,q}} \log n.
	\end{equation}
	
 Some results on univariate weighted interpolation  from \cite{MN2010} are employed in proving the upper bounds \eqref{UpperB-introduction}. The Smolyak sampling algorithms performing these upper bounds  are constructed  on  sparse grids of  sampled points  which form \emph{a step hyperbolic cross in the function domain} $\RRd$. This development of the construction of sparse grids and the associated algorithms  was proposed first  by the author \cite{DD2023} 
 for numerical weighted integration of functions having weighted mixed smoothness.  It is remarkable to notice that these sparse grids are completely different from  classical hyperbolic crosses in  frequency domains and Smolyak sparse grids   in compact function domains (see Figures \ref{Fig-1} and \ref{Fig-2}). To prove the lower bounds in  \eqref{LowerB-Introduction} and \eqref{rho_n(W)-introduction-d=1} in the case $p < q$ we adopt a traditional technique to construct for arbitrary $n$ sampled points a fooling function vanishing at these points. The lower bounds  in  \eqref{LowerB-Introduction}  and \eqref{rho_n(W)-introduction-d=1}  the case $p \ge q$ are derived from the lower bound of  Kolmogorov widths, a Bernstein-type inequality by using  a discretization technique. 

  The gap between the upper bounds (convergence rates) \eqref{UpperB-introduction} and the lower bounds in \eqref{LowerB-Introduction} for $d\ge 2$ (and $d=1$, $p=\infty$) is a logarithmic factor 
 which may be different depending on various values of $p$ and $q$.	 For all $p,q$ the main parameter $r_{\lambda,p,q}$ in the convergence rates is strictly smaller than  
 $r - (1/p -1/q)_+$ which is the main parameter  in the convergence rate of the respective  unweighted sampling $n$-widths. 
 
The paper is organized as follows. In Section \ref{Univariate sampling recovery}, we prove the exact convergence rate 
 of $\varrho_n(\bW^r_{p,w}(\RR),  L_{q,w}(\RR))$ and construct asymptotically optimal linear sampling algorithms. In Section \ref{Multivariate sampling recovery},  we prove upper and lower bounds of $\varrho_n(\bW^r_{p,w}(\RRd),  L_{q,w}(\RRd)) $ for $d \ge 2$, and construct linear sampling algorithms which give the upper bound. 

\medskip
\noindent
{\bf Notation.} 
 Denote   $\bx=:\brac{x_1,...,x_d}$ for $\bx \in \RRd$; $\bone:= (1,...,1) \in \RRd$; 
$|\bx|_p:= \brac{\sum_{j=1}^d |x_j|^p}^{1/p}$ $(1 \le p < \infty)$ and $|\bx|_\infty:= \max_{1\le j \le d} |x_j|$ with the abbreviation: $|\bx|:= |\bx|_2$.  For $\bx, \by \in \RRd$, the inequality $\bx \le \by$ ($\bx < \by$) means $x_i \le y_i$ ($x_i < y_i$) for every $i=1,...,d$.  For $x \in \RR$, denote $\sign (x):= 1$ if $x \ge 0$, and $\sign (x):= -1$ if  $x < 0$. We use letters $C$  and $K$ to denote general 
positive constants which may take different values. For the quantities $A_n(f,\bk)$ and $B_n(f,\bk)$ depending on 
$n \in \NN$, $f \in W$, $\bk \in \ZZd$,  
we write  $A_n(f,\bk) \ll B_n(f,\bk)$, $f \in W$, $\bk \in \ZZd$ ($n \in \NN$ is specially dropped),  
if there exists some constant $C >0$ independent of $n,f,\bk$  such that 
$A_n(f,\bk) \le CB_n(f,\bk)$ for all $n \in \NN$,  $f \in W$, $\bk \in \ZZd$ (the notation $A_n(f,\bk) \gg B_n(f,\bk)$ has the obvious opposite meaning), and  
$A_n(f,\bk) \asymp B_n(f,\bk)$ if $S_n(f,\bk) \ll B_n(f,\bk)$
and $B_n(f,\bk) \ll S_n(f,\bk)$.  Denote by $|G|$ the cardinality of the set $G$. 
%, and 
%$C_{\tau,\eta,...}$ and $K_{\tau,\eta,...}$  when we 
%want to emphasize the dependence of these  constants on 
%$\tau,\eta,...$, or when this dependence is 
%important in a particular situation. 
%For a Banach space $X$, denote by the boldface $\bX$ the unit ball in $X$.
%%%%%%%%%%%%%%%%%%%%%%%%%%%%%%%%%

	\section{Univariate sampling recovery }
\label{Univariate sampling recovery}

In this section,   for the one-dimensional case, we prove the exact convergence rate of  sampling $n$-widths $\varrho_n\big(\bW^r_{p,w}(\RR), L_{q,w}(\RR)\big)$ and construct asymptotically optimal linear sampling linear algorithms for $1 < p < \infty$ and $1 \le q \le \infty$. We also give some upper and lower bounds of $\varrho_n\big(\bW^r_{\infty,w}(\RR), L_{q,w}(\RR)\big)$ for $1 \le q \le \infty$.
 
Let the univariate generalized Freud weight $v$ be defined by
\begin{equation} \label{v(x)}
	v(x):= 	|x|^\mu \exp \brac{- 2a|x|^\lambda + 2b}, \ \ \mu \ge -1,
\end{equation}
where $\lambda , a, b$ are the same as in the definition \eqref{w(x)} of the weight $w$. 
 Let $(p_m)_{m \in \NN_0}$ be the sequence of orthonormal polynomials with respect to the  weight $v$. 
For a given even number $m \in \NN$, denote by $x_{m,k}$, $1 \le k \le  m/2, $ the positive zeros of 	$p_m$, and by $x_{m,-k} = - x_{m,k}$ the negative ones (since $m$ is even, $0$ is not a zero of $p_m$). These zeros are located as 
\begin{equation}\label{zeros-location}
-a_m + \frac{Ca_m}{m^{2/3}} < x_{m, -m/2 } < \cdots 
< x_{m,-1} < x_{m,1} < \cdots <  x_{m, m/2 } < a_m - \frac{Ca_m}{m^{2/3}}, 
\end{equation}
with a positive constant $C$ independent of $m$ (see, e. g., \cite[(4.1.32)]{JMN2021}). Here $a_m$ is the Mhaskar-Rakhmanov-Saff number which is
\begin{equation}\label{a_m}
	a_m :=(C_\lambda m)^{1/\lambda} \asymp m^{1/\lambda}, \ \ C_\lambda:= \frac{2^{\lambda - 1} \Gamma(\lambda/2)^2}{\Gamma(\lambda)},
\end{equation}
and $\Gamma$ is the gamma function (see, e.g., \cite[(4.1.4)]{JMN2021}).

 Throughout this paper, we fix a number $\rho$ with $0 < \rho< 1$, and denote by $j(m)$ the smallest integer satisfying $x_{m,j(m)} \ge \rho a_m$. 
It is useful to remark that
\begin{equation} \label{Delta_k}
 x_{m,j(m)} \, \asymp \, m^{1/\lambda}, \  m \in \NN_0; \quad
\Delta_{m,k} \, \asymp  \, \frac{a_m}{m} \asymp m^{1/\lambda -1}, \ \ |k| \le j(m), 
\end{equation} 
where $\Delta_{m,k}:= x_{m,k} - x_{m,k-1}$ is the distance between consecutive zeros of  the polynomial $p_m$. 
The first relation follows from the definition, \eqref{a_m} and \eqref{zeros-location}. For the second relations see \cite[(4.1.47)]{JMN2021}. 
From their proofs there, one can easily see that they are still hold true for the general case of the weight $w$. By  \eqref{zeros-location} and \eqref{Delta_k}, for $m$ sufficiently large we have that 
\begin{equation} \label{<j(m)<}
Cm \le j(m) < m/2
\end{equation} 
with a positive constant $C$ depending on $\lambda, a, b$ and $\rho$ only. 

For an even $m \in \NN$, let $\Pp^*_{m+1}$ be the subspace of $\Pp_{m+1}$ defined by
\begin{equation} \label{}
\Pp^*_{m+1}: = \brab{\varphi \in \Pp_{m+1}: \varphi(\pm a_m) = \varphi(x_{m,k}) = 0, \ |k| > j(m)}. 
\end{equation} 
Here $\Pp_m$ denotes the space of polynomials of degree at most $m$.
For $|k| \le j(m)$, we define
\begin{equation} \label{}
	\ell_{m,k}(x): = \frac{p_m(x)}{p'(x_{m,k})(x - x_{m,k})} \frac{a_m - x^2}{a_m - x_{m,k}^2}.
\end{equation}
%and
% \begin{equation} \label{}
% 	\varphi_{m,k}(x): = \frac{\ell_{m,k}(x)}{w(x_{m,k})}. 
% \end{equation} 
The polynomials $\ell_{m,k}$ belong to $\Pp_{m+1}$.  The  dimension of the space $\Pp^*_{m+1}$ is $2j(m)$ and the polynomials $\brab{\ell_{m,k}}_{|k|\le j(m)}$ constitute a basis of the space $\Pp^*_{m+1}$, i.e., every polynomial $\varphi \in \Pp^*_{m+1}$ can be represented as 
\begin{equation} \label{varphi=}
\varphi = \sum_{|k| \le j(m)}  \varphi(x_{m,k}) \ell_{m,k}. 
\end{equation}
We extend the formula \eqref{varphi=} to the functions $f$ in $C_w(\RR)$ as the interpolation operator 
$$I_m: C_w(\RR) \to \Pp^*_{m+1}$$ 
by defining
\begin{equation} \label{I_m f}
	I_m (f) := \sum_{|k| \le j(m)}  f(x_{m,k}) \ell_{m,k}. 
\end{equation}
 
The  operator $I_m$ is a projector from $ C_w(\RR)$ onto $\Pp^*_{m+1}$, and $I_m f$ belongs to $\Pp_{m+1}$. The polynomial $I_m f$ interpolates $f$ at the zeros $x_{m,k}$ for $|k| \le j(m)$, and vanishes at the points $\pm a_m$ and the zeros $x_{m,k}$ for $|k| >  j(m)$.
Also,  the number $2j(m)$ of interpolation points  in $I_m$ is strictly smaller than $m$.   However, due to \eqref{<j(m)<} it has the convergence rate as $2 j(m)\asymp m$ when $m$ going to infinity. The space of polynomials $\Pp^*_{m+1}$ and the interpolation operator $I_m$ have been introduced in \cite{MN2010}.
%%%%%%%%%%%%
\\
\\
\noindent
{\bf Condition A}. The number $p$ with $1 < p < \infty$ and the numbers $\tau\ge 0$, $\eta \ge 0$, $\mu \ge -1$ associated with the weights $w$ and $v$, satisfy 
\begin{itemize}
	\item[(\rm i)] $\tau + 1/p$ is not an integer;
\item[(\rm ii)] $- 1/p < \tau - \mu/2 < 1 - 1/p - \eta$.
\end{itemize}

Throughout this and the next sections, for convenience of the presentation, we assume without mention that  Condition~A holds for a given 
$p$ with $1 < p < \infty$ in the assumptions of lemmata and theorems.

The following two lemmata were proven in \cite[Theorem 3.7, Lemma 3.3]{MN2010}.
\begin{lemma} \label{lemma:I_m}
Let $1 < p <\infty$.	Then  for every $m \in \NN$,
	\begin{equation}\label{|f - I_m f|<}
		\|f - I_m (f)\|_{L_{p,w}(\RR)} 
		\le
	C m^{- r_\lambda} \|f \|_{W^r_{p,w}(\RR)} \ \ \forall f \in W^r_{p,w}(\RR).  
	\end{equation}
\end{lemma}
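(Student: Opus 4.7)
The plan is to decompose the interpolation error into a best-polynomial-approximation error plus the operator $I_m$ acting on that error, and to reduce the lemma to two essentially independent ingredients: the uniform $L_{p,w}(\RR)$-boundedness of $I_m$ and a weighted Jackson-type inequality on $W^r_{p,w}(\RR)$ of order $m^{-r_\lambda}$. First, since the $\ell_{m,k}$ for $|k|\le j(m)$ form a basis of $\Pp^*_{m+1}$ and $I_m$ interpolates at the $x_{m,k}$ while vanishing at $\pm a_m$ and at the outer zeros, the operator $I_m$ is a projection: $I_m P = P$ for every $P \in \Pp^*_{m+1}$. Hence for any such $P$,
$$\|f - I_m f\|_{L_{p,w}(\RR)} \,\le\, \|f - P\|_{L_{p,w}(\RR)} \,+\, \|I_m\|_{L_{p,w}\to L_{p,w}} \, \|f - P\|_{L_{p,w}(\RR)},$$
so taking the infimum over $P$ gives
$$\|f - I_m f\|_{L_{p,w}(\RR)} \,\le\, \brac{1 + \|I_m\|_{L_{p,w}\to L_{p,w}}}\, E^*_{m}(f)_{p,w}, \quad E^*_{m}(f)_{p,w}:= \inf_{P\in \Pp^*_{m+1}} \|f-P\|_{L_{p,w}(\RR)}.$$

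The main obstacle is the uniform bound $\|I_m\|_{L_{p,w}\to L_{p,w}} \le C$, without the logarithmic loss that plagues generic Lagrange interpolation. The correcting factor $(a_m-x^2)/(a_m-x_{m,k}^2)$ in the definition of $\ell_{m,k}$ is introduced precisely to tame the boundary behaviour near $\pm a_m$ so that no such loss occurs. The route I would take is to establish a Marcinkiewicz--Zygmund equivalence
$$\sum_{|k|\le j(m)} \Delta_{m,k} \, w(x_{m,k})^p \, |P(x_{m,k})|^p \,\asymp\, \|P\|_{L_{p,w}(\RR)}^p, \quad P \in \Pp^*_{m+1},$$
using Christoffel-function estimates for the auxiliary Freud-type weight $v$, together with the node-spacing relations \eqref{Delta_k} and pointwise control of $p_m(x)/p_m'(x_{m,k})$ on $[-a_m,a_m]$. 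Condition C, in particular the two-sided inequality $-1/p < \tau - \mu/2 < 1-1/p-\eta$, is exactly the range of parameters that makes the ratio of $w^p$ to the discrete quadrature weights attached to $v$ lie in the admissible Muckenhoupt-type class. Combining the MZ equivalence with a pointwise estimate of $\sum_k |\ell_{m,k}|$ in the $L_{p,w}$-norm then yields the uniform boundedness; this is the substance of \cite[Lemma 3.3]{MN2010}.

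Finally, for the Jackson-type estimate $E^*_m(f)_{p,w} \le C m^{-r_\lambda}\|f\|_{W^r_{p,w}(\RR)}$, observe that the weight $w$ forces the tails $|x|>a_m$ to contribute only exponentially small mass, so the problem reduces to polynomial approximation of smoothness $r$ on the Mhaskar--Rakhmanov--Saff interval $[-a_m,a_m]$. A rescaling to $[-1,1]$ followed by a classical Jackson theorem gives the rate $(a_m/m)^r \asymp m^{r/\lambda - r} = m^{-r_\lambda}$; a small modification (multiplying the near-optimal polynomial from $\Pp_{m+1}$ by a cut-off vanishing at $\pm a_m$ and at the outer zeros, or adding a correction of negligibly small norm) lifts it to $\Pp^*_{m+1}$ without changing the rate. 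Substituting into the decomposition of the first step completes the proof; the boundedness of $I_m$ is the only step that is technically delicate.
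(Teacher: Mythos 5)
The paper gives no proof of this lemma at all: it is quoted verbatim from \cite[Theorem~3.7]{MN2010}, just as the Marcinkiewicz--Zygmund equivalence (Lemma~\ref{lemma:MarcinkiewiczInequality}) is quoted from \cite[Lemma~3.3]{MN2010}. So your reconstruction must stand on its own, and it has a structural gap in its very first step. You reduce everything to a uniform bound on $\|I_m\|_{L_{p,w}\to L_{p,w}}$, but $I_m$ is a sampling operator built from point values $f(x_{m,k})$; it is neither defined nor bounded on $L_{p,w}(\RR)$, since point evaluations are not controlled by an $L_p$-norm. What the MZ equivalence actually gives you, applied to the polynomial $I_m(f-P)\in\Pp^*_{m+1}$, is that $\|I_m(f-P)\|_{L_{p,w}(\RR)}$ is comparable to the discrete norm $\bigl(m^{1/\lambda-1}\sum_{|k|\le j(m)}|(f-P)(x_{m,k})w(x_{m,k})|^p\bigr)^{1/p}$, and this discrete norm cannot be estimated by $\|f-P\|_{L_{p,w}(\RR)}$ alone. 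The real proof needs additional control of the error at the nodes, typically a discretization inequality of the type $\bigl(\sum_{|k|\le j(m)}\Delta_{m,k}|g(x_{m,k})w(x_{m,k})|^p\bigr)^{1/p}\ll \|g\|_{L_{p,w}(\RR)}+\tfrac{a_m}{m}\|g'\|_{L_{p,w}(\RR)}$, combined with a \emph{simultaneous} near-best approximation $P$ of $f$ and $f'$; this is precisely where the $W^r_{p,w}$-smoothness enters and is the actual substance of \cite{MN2010}, not a ``uniform boundedness of $I_m$'' statement (and \cite[Lemma~3.3]{MN2010}, which you invoke for that purpose, is the MZ inequality, not an operator-norm bound). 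Interpreting $\|I_m\|$ instead as a norm from $C_w(\RR)$ to $L_{p,w}(\RR)$ does not rescue the argument: that norm grows with $m$ and would force you through a weighted uniform Jackson bound, degrading the rate.

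The second ingredient is also not established as claimed. The weighted Jackson estimate $E^*_m(f)_{p,w}\ll m^{-r_\lambda}\|f\|_{W^r_{p,w}(\RR)}$ does not follow from rescaling $[-a_m,a_m]$ to $[-1,1]$ and applying a classical unweighted Jackson theorem, because $w$ varies over many orders of magnitude across the Mhaskar--Rakhmanov--Saff interval; the restricted-range (infinite-finite range) inequality only removes the tails, it does not make the weight harmless inside. The correct ingredient is the Jackson--Favard inequality for Freud-type weights, i.e.\ $E_m(f)_{w,p}\ll m^{-r_\lambda}\|f\|_{W^r_{p,w}(\RR)}$ as in \cite{DM2003} (see also \cite{JMN2021}), which the paper itself invokes later in the proof of Lemma~\ref{lemma:Delta_k^I-p=infty }; one must then still pass from $\Pp_m$ to $\Pp^*_{m+1}$, which again is handled in \cite{MN2010}. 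In short, your outline identifies the right objects (projection property, MZ equivalence, Jackson rate $\asymp(a_m/m)^r$), but the reduction to an $L_{p,w}$-operator-norm bound is not a valid route, and both key estimates are in fact the nontrivial content of the cited theorem rather than consequences of standard facts.
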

%%%%%%%%%%%%%%
\begin{lemma} \label{lemma:MarcinkiewiczInequality}
	Let $1 < p <\infty$.
	Then there holds the equivalence  
	\begin{equation}\label{MarcinkiewiczInequality}
		\|\varphi\|_{L_{p,w}(\RR)} 
		\asymp
		 \brac{m^{1/\lambda -1} \sum_{|k|\le j(m)} |\varphi(x_{m,k}) w(x_{m,k})|^p}^{1/p} \ \ 
		 \forall  \varphi \in \Pp^*_{m+1} \ \ \forall m \in \NN.  
	\end{equation}
\end{lemma}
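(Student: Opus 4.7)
The statement is a Marcinkiewicz--Zygmund-type norm equivalence for weighted polynomials in the subspace $\Pp^*_{m+1}$, sampled at the interior zeros $x_{m,k}$, $|k|\le j(m)$, of $p_m$. My plan is to localise the weighted $L_p$ norm $\|\varphi\|_{L_{p,w}(\RR)}$ to the disjoint subintervals $I_k := [x_{m,k-1},x_{m,k}]$ lying inside the ``bulk'' interval $[-x_{m,j(m)}, x_{m,j(m)}]$, and to control the contribution from the complementary ``tail'' $|x|>x_{m,j(m)}$ separately. The three tools driving the argument are: (a) the mesh estimate $\Delta_{m,k}=|I_k|\asymp m^{1/\lambda-1}$ from \eqref{Delta_k}; (b) near-constancy of the weight on each $I_k$, since $|w'(x)/w(x)|\lesssim |x|^{\lambda-1}$ combined with $|x|\lesssim m^{1/\lambda}$ and $|I_k|\lesssim m^{1/\lambda-1}$ gives $w(x)\asymp w(x_{m,k})$ throughout $I_k$; and (c) a weighted Markov--Bernstein inequality giving $\|(\varphi w)'\|_{L_\infty(I_k)}\lesssim m^{1-1/\lambda}\|\varphi w\|_{L_\infty(I_k)}$, which controls the oscillation of $\varphi w$ on the scale of one subinterval.

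For the lower bound $\|\varphi\|_{L_{p,w}(\RR)}^p \gtrsim m^{1/\lambda-1}\sum_{|k|\le j(m)}|\varphi(x_{m,k})w(x_{m,k})|^p$, I would use ingredient (c) to extract a ``flat region'' around each node: for every $|k|\le j(m)$ there exists a subinterval $I'_k$ of length $\gtrsim m^{1/\lambda-1}$, containing $x_{m,k}$ and contained in $I_k\cup I_{k+1}$, on which $|\varphi(x) w(x)|\ge \tfrac12|\varphi(x_{m,k})w(x_{m,k})|$. After a thinning step to guarantee that the $I'_k$ are pairwise disjoint, integrating $|\varphi w|^p$ over them and summing yields the desired lower bound.

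For the upper bound, I would split $\|\varphi w\|_{L_p(\RR)}^p=B+T$ into a bulk integral over $[-x_{m,j(m)},x_{m,j(m)}]$ and a tail integral over its complement. The bulk term $B$ is handled symmetrically to the lower bound: by ingredient (c), $\sup_{x\in I_k}|\varphi(x)w(x)|\lesssim\max\bigl\{|\varphi(x_{m,k-1})w(x_{m,k-1})|,\,|\varphi(x_{m,k})w(x_{m,k})|\bigr\}$, so $\int_{I_k}|\varphi w|^p\,dx \lesssim m^{1/\lambda-1}$ times one of the two endpoint values, and the sum telescopes to the right-hand side of \eqref{MarcinkiewiczInequality}. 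For the tail $T$, I would exploit the extra vanishing built into $\Pp^*_{m+1}$, namely that $\varphi$ vanishes at $\pm a_m$ and at every exterior zero $x_{m,k}$ with $|k|>j(m)$, and show via an Mhaskar--Rakhmanov--Saff infinite--finite range inequality that $T\lesssim B$, so the tail is absorbed.

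The main obstacle is precisely this tail estimate. The classical Mhaskar inequality only guarantees $\|\varphi w\|_{L_p(|x|>a_m)}\le e^{-cm}\|\varphi w\|_{L_p(\RR)}$, which is useless here because the tail region begins at $x_{m,j(m)}\asymp \rho a_m$, strictly inside $[-a_m,a_m]$. The required extra decay must come from the factorisation $\varphi(x)=(a_m^2-x^2)\prod_{|k|>j(m)}(x-x_{m,k})\cdot Q(x)$ with $Q$ a polynomial of substantially lower degree: the outer product forces $|\varphi(x)w(x)|$ to be exponentially smaller in the transition region $\rho a_m<|x|<a_m$ than at the Mhaskar endpoint. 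Quantifying this reduction rigorously requires sharp pointwise asymptotics for $p_m$ near the soft edge, drawn from the theory of Freud-type orthonormal polynomials, and this is where the delicate bookkeeping of the argument concentrates.
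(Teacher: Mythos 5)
First, note that the paper does not prove this lemma at all: it is quoted from \cite[Lemma 3.3]{MN2010}, so your argument has to stand entirely on its own, and as written it does not.

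The decisive flaw is your ingredient (c) and the way the bulk estimate and the ``flat region'' lower bound rely on it. A weighted Bernstein inequality of the form $\|(\varphi w)'\|_{L_\infty(I_k)}\lesssim m^{1-1/\lambda}\|\varphi w\|_{L_\infty(I_k)}$, with the sup on the right taken only over the same mesh interval $I_k$, is not a valid inequality: the standard Bernstein estimate controls $(\varphi w)'$ by the \emph{global} sup of $\varphi w$ (or the sup over the Mhaskar interval), and a polynomial of degree $m+1$ can be much larger inside $I_k$ than at the two endpoints. Concretely, take $\varphi=\ell_{m,j}$ for a node $x_{m,j}$ far from $I_k$: both endpoint values $\varphi(x_{m,k-1})=\varphi(x_{m,k})=0$, yet $\sup_{I_k}|\varphi w|>0$, so your claimed bound $\sup_{I_k}|\varphi w|\lesssim\max\{|\varphi(x_{m,k-1})w(x_{m,k-1})|,|\varphi(x_{m,k})w(x_{m,k})|\}$ fails with any constant. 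A related structural red flag: your argument is purely local and never uses $1<p<\infty$, so it would ``prove'' the equivalence also for $p=\infty$; but for $p=\infty$ the equivalence is false by a $\log m$ factor (the weighted Lebesgue constant of the truncated interpolation grows like $\log m$, which is exactly why the paper's $p=\infty$ results in \eqref{|f - I_m f|<-p=infty2} and Lemma \ref{lemma:Delta_k^I-p=infty } carry a $\log$). The genuine proof of the direction $\|\varphi\|_{L_{p,w}}\lesssim(\cdot)^{1/p}$ must exploit that $\varphi=\sum_{|k|\le j(m)}\varphi(x_{m,k})\ell_{m,k}$ and bound the truncated Lagrange operator from the discrete $\ell^p$ norm into $L_{p,w}(\RR)$, using pointwise estimates for $p_mw$ and $M$-Riesz/Hilbert-transform type $L_p$-boundedness, which is precisely where $1<p<\infty$ enters; this is the route of \cite{MN2010}. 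Finally, you yourself flag the tail estimate over $\rho a_m<|x|<a_m$ as the ``main obstacle'' and leave it as a description of needed soft-edge asymptotics rather than an argument, so even apart from the bulk issue the proposal is incomplete there.
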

%%%%%%%%%%%%%%
\begin{lemma} \label{lemma:B-NInequality}
	Let $1 \le  p,q \le \infty$. Then we have the following.
\begin{itemize}
	\item[\rm{(i)}] 
	There holds the Bernstein-type inequality  
	\begin{equation}\label{BernsteinInequality}
		\|\varphi'\|_{L_{p,w}(\RR)} 
		\ll
		m^{1- 1/\lambda}\|\varphi\|_{L_{p,w}(\RR)}	 	 
		\ \ \ \
		\forall  \varphi \in \Pp_m, \ \ \forall m \in \NN.  
	\end{equation}
	\item[\rm{(ii)}] For any fixed $\delta  > 0$, there exists a constant $C_\delta > 0$ such that
	\begin{equation}\label{NikolskiiInequality}
		\|\varphi\|_{L_{p,w}(\RR)} 
		\le
	C_\delta \|\varphi\|_{L_{p,w}(I^\delta_m)}	 	 
		\ \ \ \
		\forall  \varphi \in \Pp_m, \ \ \forall m \in \NN, 
	\end{equation}		
where $I^\delta_m:= [-a_m, - \delta a_m/m] \cup [\delta a_m/m, a_m]$	
		\item[\rm{(iii)}] For $1 \le p < q \le \infty$, there holds the Nikol'skii-type inequality
			\begin{equation}\label{NikolskiiInequality_p<q}
			\|\varphi\|_{L_{q,w}(\RR)} 
			\ll
			m^{(1-1/\lambda)(1/p - 1/q)}\|\varphi\|_{L_{p,w}(\RR)}	 	 
			\ \ \ \
			\forall  \varphi \in \Pp_m, \ \ \forall m \in \NN.  
		\end{equation}		
	\item[\rm{(iv)}] For $1 \le q < p \le \infty$, there holds the Nikol'skii-type inequality
\begin{equation}\label{NikolskiiInequality_p>q}
	\|\varphi\|_{L_{q,w}(\RR)} 
	\ll
	m^{(1/\lambda)(1/q - 1/p)}\|\varphi\|_{L_{p,w}(\RR)},	 	 
	 \ \
	\forall  \varphi \in \Pp_m, \  \forall m \in \NN.  
\end{equation}		
\end{itemize}	
\end{lemma}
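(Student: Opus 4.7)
The four inequalities are all classical Bernstein/Nikol'skii-type bounds for polynomials weighted by generalized Freud weights, and my plan is to reduce each part to known facts in the theory of orthogonal polynomials with exponential weights, with the scaling throughout fixed by the Mhaskar--Rakhmanov--Saff number $a_m \asymp m^{1/\lambda}$.

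For (i), the weighted Markov--Bernstein inequality $\|\varphi'\|_{L_{p,w}(\RR)} \ll (m/a_m) \|\varphi\|_{L_{p,w}(\RR)}$ is classical for pure Freud weights and I would invoke it from \cite{JMN2021} or \cite{Mha1996B}. The natural scale $m/a_m \asymp m^{1-1/\lambda}$ is exactly the claimed factor; the extra algebraic factor $|x|^\tau(1+|x|)^\eta$ in $w$ is sub-exponential, so under Condition~C it affects only constants and not the polynomial exponent. For (ii), I would use the Mhaskar--Saff infinite--finite range inequality $\|\varphi w\|_{L_p(\RR)} \ll \|\varphi w\|_{L_p([-a_m,a_m])}$ for $\varphi \in \Pp_m$, and then estimate the contribution of the small interval $(-\delta a_m/m, \delta a_m/m)$ by the pointwise bound $\|\varphi w\|_{L_\infty(\RR)} \ll (m/a_m)^{1/p}\|\varphi w\|_{L_p(\RR)}$ combined with the length $\asymp a_m/m$ of that interval. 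When $\tau>0$ the factor $|x|^\tau$ supplies an extra multiplier $(a_m/m)^\tau$; when $\tau=0$, the exponent gain is absent, but taking $\delta$ sufficiently small the piece near $0$ is absorbed into the constant $C_\delta$.

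For (iii), I would write, for $p<q<\infty$,
\[
\|\varphi w\|_{L_q(\RR)}^q \le \|\varphi w\|_{L_\infty(\RR)}^{q-p}\,\|\varphi w\|_{L_p(\RR)}^p,
\]
and invoke the $L_p\to L_\infty$ Nikol'skii-type bound $\|\varphi w\|_{L_\infty(\RR)} \ll (m/a_m)^{1/p}\|\varphi w\|_{L_p(\RR)}$, which is obtainable from the Christoffel-function/Marcinkiewicz--Zygmund ingredients used in Lemma \ref{lemma:MarcinkiewiczInequality} (cf.\ \cite{MN2010,JMN2021}); taking $q$-th roots yields the factor $m^{(1-1/\lambda)(1/p-1/q)}$. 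The case $q=\infty$ is the bare $L_p\to L_\infty$ bound. For (iv), I would first apply part~(ii) to concentrate the $L_q$ norm on $[-a_m,a_m]$, and then use Hölder's inequality on this interval of Lebesgue length $\asymp a_m \asymp m^{1/\lambda}$:
\[
\|\varphi w\|_{L_q(\RR)} \ll \|\varphi w\|_{L_q([-a_m,a_m])} \le (2a_m)^{1/q-1/p}\|\varphi w\|_{L_p([-a_m,a_m])} \ll m^{(1/q-1/p)/\lambda}\|\varphi w\|_{L_p(\RR)},
\]
which is exactly \eqref{NikolskiiInequality_p>q}; the case $p=\infty$ is immediate from (ii).

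The main obstacle I foresee is part (ii) in the presence of the singular factor $|x|^\tau$: the interaction between the Mhaskar--Saff restriction and the behavior of $w$ near $0$ has to be carried out so that the constant $C_\delta$ is independent of $m$, which forces a case distinction on $\tau$. A secondary obstacle is that the weighted $L_p\to L_\infty$ inequality used in (iii) is not explicitly isolated in the paper so far and must be either derived from Lemma \ref{lemma:MarcinkiewiczInequality} (by taking the sup of a representation of $\varphi$ in terms of its values $\{\varphi(x_{m,k})\}$) or quoted from \cite{MN2010}; once this pointwise bound is in hand, (iii) and (iv) are short, and (i) is a direct citation.
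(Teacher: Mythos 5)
Your treatment of part (iv) is exactly the paper's argument: reduce to the finite interval via (ii) and apply H\"older's inequality there, with the length of the interval $\asymp a_m \asymp m^{1/\lambda}$ producing the factor $m^{(1/\lambda)(1/q-1/p)}$; the paper does precisely this (its displayed proof restricts to $I^\delta_m$ and uses H\"older with $\nu=p/q$). For (i)--(iii) the paper does not prove anything: it simply cites \cite[pp.~106--107]{MS2007} and \cite[Lemma 4.1.3]{JMN2021}, so your plan to quote (i), and your fallback of quoting (iii) and the restricted-range inequality from the literature, is in line with the source.

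Where you go beyond citation, two of your sketches have concrete problems. First, your argument for (ii) -- absorb the near-zero interval using $\|\varphi w\|_{L_\infty}\ll (m/a_m)^{1/p}\|\varphi w\|_{L_p}$ and the length $\asymp \delta a_m/m$ -- only yields the inequality for $\delta$ \emph{sufficiently small} (the absorbed term is $\asymp\delta\|\varphi w\|_{L_p}^p$), whereas the lemma asserts it for \emph{any} fixed $\delta>0$; since enlarging $\delta$ shrinks $I^\delta_m$ and strengthens the claim, the small-$\delta$ case does not imply the general one, and the full statement (as in \cite{MS2007}, \cite{JMN2021}) needs finer Christoffel-function-type information near the origin, especially when $\tau>0$. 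Second, your proposed derivation of the $L_p\to L_\infty$ Nikol'skii bound from Lemma \ref{lemma:MarcinkiewiczInequality} does not go through as stated: that equivalence holds only for $\varphi\in\Pp^*_{m+1}$ (polynomials vanishing at $\pm a_m$ and at the outer zeros), not for all of $\Pp_m$, and converting a discrete $\ell_p$ control of the values $\varphi(x_{m,k})$ into a sup bound requires estimating the weighted Lebesgue function of the interpolation array -- which is nontrivial and, at the $L_\infty$ endpoint, is exactly where logarithmic losses appear (compare \eqref{|f - I_m f|<-p=infty2}). So if you intend (iii) to be self-contained, the $L_p\to L_\infty$ inequality must be quoted (as the paper effectively does) or proved by a genuinely different argument; with that quotation in place, your interpolation step for (iii) and your H\"older step for (iv) are correct and give the stated exponents.
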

%%%%%%%%%%%%%%

\begin{proof}
The claims (i)--(iii) were proven in \cite[pp. 106, 107]{MS2007}
(see also \cite[Lemma 4.1.3]{JMN2021}). The claim (iv) can be obtained from the claim (iii) and \eqref{a_m} by applying the H\"older inequality for $\nu = p/q >1$ and $1/\nu + 1/\nu' =1$. Indeed, for a fixed $\delta > 0$, we have
\begin{equation}
	\begin{split}\label{}
		\int_{\RR} |\varphi(x) w(x)|^q \rd x
		&\ll 
		(2a_m(1 - \delta/m ))^{1/\nu'}\brac{\int_{I^\delta_m} |\varphi(x) w(x)|^{q \nu} \rd x}^{1/\nu}\\
		& \ll
		m^{q(1/\lambda)(1/q - 1/p)}\brac{\int_{\RR} |\varphi(x) w(x)|^p \rd x}^{q/p}.
	\end{split}
\end{equation}
\hfill	
\end{proof}	
	%%%%%%%%%%%%%%%
	
	Notice that Condition A combines necessary and sufficient conditions  for  Lemmas~\ref{lemma:I_m}--\ref{lemma:B-NInequality}, which are used in the proofs of all the results. For example, Condition A(ii) is a necessary and sufficient condition for one of the inequalities in Lemma \ref{lemma:MarcinkiewiczInequality} (see \cite[Lemma 3.3]{MN2010}), and Condition A is a sufficient condition for Lemma~\ref{lemma:I_m} (see \cite[Theorem 3.7]{MN2010}). For  detail, see \cite{MN2010}. To streamline the presentation,  all these conditions are encapsulated into Condition A, which serves as an assumption for  all the statements in what follows. 

For every $k \in \NN_0$, let $m_k$ be the largest number such that $m_k + 1 \le 2^k$.  Then by \eqref{<j(m)<} we have 
$2^k \asymp 2j(m_k) \le 2^k$. 
For the sequence of sampling operators $\brac{I_{m_k}}_{k \in \NN_0}$ with 
$
I_{m_k} \in \Ss_{2^k},
$
from Lemma \ref{lemma:I_m} with $s=0$ it follows that
\begin{equation}\label{IntErrorR2}
	\big\|f - I_{m_k}(f)\big\|_{L_{p,w}(\RRd)}  
	\leq C 2^{- r_\lambda k} \|f\|_{W^r_{p,w}(\RR)}, 
	\ \  \ k \in \NN_0,  \ \ f \in W^r_{p,w}(\RR) \ \ (1 < p < \infty).
\end{equation}
We define  the one-dimensional operators for $k \in \NN_0$
\begin{equation}\label{Delta}
	\Delta_k:=  I_{m_k} - I_{m_{k-1}}, \  k >0, \ \ \Delta_0:= I_1. 
\end{equation}
%%%%%%%%%%%%%%%%%%%%%%%%%
\begin{lemma} \label{lemma:Delta_k}
	Let $1< p < \infty$, $1\le q \le \infty$ and $r_{\lambda,p,q} >0$.   Then we have that
	for every $ f \in W^r_{p,w}(\RR)$,
	\begin{equation}\label{Series2}
		f
		= 	\sum_{k \in \NN_0}\Delta_k(f) 
	\end{equation}
	with absolute convergence in the space $L_{q,w}(\RR)$  of the series. 
	Moreover,
	\begin{equation}\label{Delta_kf}
		\big\|\Delta_k (f)\big\|_{L_{q,w}(\RR)}  
		\leq C 2^{-  r_{\lambda,p,q} k} \|f\|_{W^r_{p,w}(\RR)}, 
		\ \  k \in \NN_0.
	\end{equation}
\end{lemma}

\begin{proof}
Let 	$ f \in W^r_{p,w}(\RR)$.
Since $\Delta_k f \in \Pp_{m_k +1}$ by the claims (iii) and (iv) of  Lemma \ref{lemma:B-NInequality} we have that
		\begin{equation}\label{Delta_kf<<}
		\big\|\Delta_k(f)\big\|_{L_{q,w}(\RR)}  
		\ll 2^{\delta_{\lambda,p,q} k} 	\big\|\Delta_k (f)\big\|_{L_{p,w}(\RR)},
		\ \   k \in \NN_0.
	\end{equation}	
	By  \eqref{IntErrorR2} and \eqref{Delta} we have that for every $f \in W^r_{p,w}(\RR)$ and $k \in \NN_0$,
	\begin{equation}\nonumber
		\begin{aligned}
			\big\|\Delta_k (f)\big\|_{L_{p,w}(\RR)} 
			& \le  \big\|f  - I_{2 j(m_k)}(f)\big\|_{L_{p,w}(\RR)}  
			+ \big\|f  - I_{2 j(m_{k-1})}(f)\big\|_{L_{p,w}(\RR)} 
			\\
			&\ll 
			  2^{- r_\lambda k} \|f\|_{W^r_{p,w}(\RR)},
		\end{aligned}
	\end{equation}
	which together with \eqref{Delta_kf<<} proves \eqref{Delta_kf} and hence  the absolute convergence of the series in \eqref{Series2} follows. 
	The equality in \eqref{Series2} is implied from \eqref{|f - I_m f|<}  and the equality 
	\begin{equation}\label{I_{m_k}}
		I_{m_k}
		= 	\sum_{s \le k} \Delta_s.
	\end{equation}	
	\hfill	
\end{proof}

%%%%%%%%%%%%%%%%%%%%%%%%%%%

We  recall some well-known inequalities  which are quite useful  for lower estimation of  sampling $n$-widths. 
From the definition \eqref{rho_n} we have the following result.
If $F$ is a set of continuous functions on $\RRd$ and $X$ is a normed space of functions on $\RRd$,	then we have
\begin{equation} \label{rho_n>}
	\varrho_n(F, X) \ge \inf_{\brab{\bx_1,...,\bx_n} \subset \RRd} \ \sup_{f\in F: \ f(\bx_i)= 0,\ i =1,...,n} 
	\|f \|_X.
\end{equation}

Let $n \in \NN$ and 
let $X$ be a Banach space and $F$ a central symmetric compact set in $X$.
Then the Kolmogorov $n$-width  of $F$ is defined by
\begin{equation*}
	d_n(F,X):= \inf_{L_{n}}\sup_{f\in F}\inf_{g\in L_n}\|f-g\|_X, 
\end{equation*}
where the left-most  infimum is  taken over all  subspaces $L_{n}$ of dimension  $\le n$ in $X$.  
If $X$ is a normed space of functions on $\Omega$ and $F \subset X$ is a set of continuous functions on $\Omega$,  then from the definitions we have
\begin{equation} \label{rho_n > d_n}
	\varrho_n(F, X) \ge  d_n(F, X).
\end{equation}
%%%%%%%%%%%%%%%%%%%%%%%%%%%%

\begin{theorem} \label{thm:I_m}
	Let $1< p < \infty$, $1\le q \le \infty$ and $r_{\lambda,p,q} >0$.   For any $n \in \NN$, let $k(n)$ be the largest integer such that $2^{k(n)}  \le n$. Then the sampling operators	$I_{m_{k(n)}} \in \Ss_n$, $n \in \NN$, are  asymptotically optimal for the sampling $n$-widths $\varrho_n\big(\bW^r_{p,w}(\RR),  L_{q,w}(\RR)\big)$ and
		\begin{equation}\label{rho_n-d=1}
			\varrho_n\big(\bW^r_{p,w}(\RR),  L_{q,w}(\RR)\big) 
			\asymp 
				\sup_{f\in \bW^r_{p,w}(\RR)} \big\|f - I_{m_{k(n)}}(f)\big\|_{ L_{q,w}(\RR)} 
			\asymp
			n^{- r_{\lambda,p,q}}.
		\end{equation}
\end{theorem}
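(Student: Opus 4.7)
First I would verify $I_{m_{k(n)}} \in \Ss_n$: by \eqref{<j(m)<} the operator uses $2j(m_{k(n)}) \le m_{k(n)} \le 2^{k(n)} - 1 \le n$ sample points. Combining the partial-sum identity \eqref{I_{m_k}} with the $L_{q,w}(\RR)$-convergent decomposition $f = \sum_{k \ge 0}\Delta_k^I(f)$ from Lemma \ref{lemma:Delta_k^I} yields
\begin{equation*}
f - I_{m_{k(n)}}(f) = \sum_{k > k(n)}\Delta_k^I(f) \quad \text{in } L_{q,w}(\RR).
\end{equation*}
Substituting the bound \eqref{Delta_k^If} and summing the geometric series (which converges since $r_{\lambda,p,q} > 0$) gives
\begin{equation*}
\sup_{f \in \bW^r_{p,w}(\RR)}\|f - I_{m_{k(n)}}(f)\|_{L_{q,w}(\RR)} \ll 2^{-r_{\lambda,p,q} k(n)} \asymp n^{-r_{\lambda,p,q}},
\end{equation*}
which is the upper bound in \eqref{rho_n-d=1}.

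\textbf{Lower bound via a fooling function.} By \eqref{rho_n>} it suffices, for arbitrary sampling nodes $y_1,\dots,y_n \in \RR$, to exhibit $f \in \bW^r_{p,w}(\RR)$ vanishing at every $y_i$ with $\|f\|_{L_{q,w}(\RR)} \gg n^{-r_{\lambda,p,q}}$. Fix $m \asymp n$ and restrict to the bulk region $\{x : \rho' a_m \le |x| \le \rho a_m\}$ for fixed $0<\rho'<\rho<1$; by \eqref{Delta_k} this bulk contains $N \asymp n$ disjoint subintervals of common length $h \asymp m^{1/\lambda - 1} \asymp n^{-(1-1/\lambda)}$, and pigeonholing the $n$ sample nodes leaves at least $M \ge N/2 \asymp n$ of them, say $J_1,\dots,J_M$, free of sample points. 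On each $J_j$ with left endpoint $a_j$ place the rescaled bump $\phi_j(x) := (h^{r - 1/p}/w(a_j))\,\phi_0((x-a_j)/h)$, where $\phi_0$ is a fixed nonzero $C^\infty$ profile supported in $[0,1]$; because $w$ varies by only a bounded factor on each such subinterval (an easy consequence of the exponential form \eqref{w(x)} together with $h \asymp m^{1/\lambda - 1}$), the position-dependent factor $1/w(a_j)$ cancels the weight to leading order and gives $\|\phi_j\|_{W^r_{p,w}(\RR)} \asymp 1$ and $\|\phi_j\|_{L_{q,w}(\RR)} \asymp h^{r-1/p+1/q}$ uniformly in $j$. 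For $p \le q$, taking $f := \phi_1$ yields $\|f\|_{L_{q,w}(\RR)} \asymp h^{r-1/p+1/q} \asymp n^{-r_{\lambda,p,q}}$ by the first branch of \eqref{delta_{lambda,p,q}}. For $p > q$, taking instead $f := M^{-1/p}\sum_{j=1}^M \phi_j$ and using the disjointness of the supports gives $\|f\|_{W^r_{p,w}(\RR)} \asymp 1$ and $\|f\|_{L_{q,w}(\RR)} \asymp M^{1/q - 1/p}\, h^{r-1/p+1/q} \asymp n^{-r_{\lambda,p,q}}$ by the second branch of \eqref{delta_{lambda,p,q}}. In both cases $f$ vanishes on $\{y_1,\dots,y_n\}$ by construction, which yields the lower bound; combined with the upper bound this proves \eqref{rho_n-d=1} and the asymptotic optimality of $I_{m_{k(n)}}$.

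\textbf{Main obstacle.} The most delicate step is the case $p > q$: a single bump falls short by a factor $n^{1/q - 1/p}$, and one must combine $M \asymp n$ bumps with the common coefficient $M^{-1/p}$, chosen small enough to keep the $W^r_{p,w}$ norm bounded while harvesting an $M^{1/q - 1/p}$ gain in the $L_{q,w}$ norm. The position-dependent factor $1/w(a_j)$ in the bumps is also indispensable because $w$ varies by many orders of magnitude across the bulk; verifying that its logarithmic variation is $O(1)$ on an interval of length $h$ reduces by \eqref{w(x)} to the identity $\lambda a |a_j|^{\lambda-1} h = O(1)$, which follows from $|a_j| \le a_m \asymp m^{1/\lambda}$ and $h \asymp m^{1/\lambda - 1}$.
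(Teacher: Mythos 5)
Your proposal is correct. The upper bound and the $p\le q$ lower bound follow essentially the paper's own route: the same telescoping of $I_{m_{k(n)}}$ via Lemma \ref{lemma:Delta_k^I}, and a single scaled bump supported on an interval of length $h\asymp n^{1/\lambda-1}$ located at distance $\asymp n^{1/\lambda}$ from the origin and avoiding the sample points (the paper realizes the bump as $g\,w^{-1}$ and differentiates $w^{-1}$ explicitly, whereas you freeze the weight at one point of the interval and justify this by $a\lambda|a_j|^{\lambda-1}h=O(1)$ — a cleaner normalization, and your exponent arithmetic checks out). Where you genuinely diverge is the case $p\ge q$ (in particular $p>q$): the paper abandons fooling functions there and instead bounds $\varrho_n$ from below by the Kolmogorov width via \eqref{rho_n > d_n}, embeds a rescaled polynomial ball into $\bW^r_{p,w}(\RR)$ through the Bernstein inequality of Lemma \ref{lemma:B-NInequality}, discretizes with the Marcinkiewicz-type equivalence of Lemma \ref{lemma:MarcinkiewiczInequality}, and invokes the exact formula $d_n(B^N_p,\ell^N_q)=(N-n)^{1/q-1/p}$; you instead assemble $M\asymp n$ disjoint bumps in the bulk region with common coefficient $M^{-1/p}$, which by disjointness of supports gives $\|f\|_{W^r_{p,w}(\RR)}\asymp 1$ and $\|f\|_{L_{q,w}(\RR)}\asymp M^{1/q-1/p}h^{\,r-1/p+1/q}\asymp n^{-r_{\lambda,p,q}}$, exactly the needed order. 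Your route is more elementary and self-contained (it uses only \eqref{rho_n>}, and needs neither Condition C, nor the weighted Marcinkiewicz inequality, nor any finite-dimensional width formula), while the paper's route yields in addition the Kolmogorov-width lower bound \eqref{d_n>}, which is of independent interest and is reused elsewhere. Two small points to tidy: fix the constants so that the number $N$ of subintervals of the bulk satisfies $N\ge 2n$ before pigeonholing (otherwise the claim $M\ge N/2$ is not guaranteed), and read $\|\phi_j\|_{L_{q,w}(\RR)}\asymp h^{\,r-1/p+1/q}$ with the convention $1/\infty=0$ when $q=\infty$, noting that the bumps lie away from the possible singularity of $w$ at the origin so membership in $C_w(\RR)$ is automatic.
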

%%%%%%%%%%%%%%%
\begin{proof}
%	Notice that the upper bound 
%		\begin{equation}\label{}
%		\varrho_n\big(\bW^r_{p,w}(\RR),  L_{p,w}(\RR)\big) 
%		\le 
%		\sup_{f\in \bW^r_{p,w}(\RR)} \big\|f - I_{m_{k(n)}}f\big\|_{ L_{p,w}(\RR)} 
%		\ll
%		n^{- r_\lambda}
%	\end{equation}
%in \eqref{rho_n-d=1} for the case $p=q$ immediately follows from \eqref{|f - I_m f|<} for $s=0$, \eqref{Delta_k} and \eqref{<j(m)<}. Let us prove the upper bound in \eqref{rho_n-d=1} for the general case.
 By using Lemma \ref{lemma:Delta_k} we derive  for $f\in \bW^r_{p,w}(\RR)$ and $n \in \NN_0$,
\begin{equation}\label{}
	\begin{split}\label{}
\big\|f - I_{m_{k(n)}}(f)\big\|_{ L_{q,w}(\RR)} 
		& = 
		\Bigg\|\sum_{k > k(n)} \Delta_k (f)\Bigg\|_{ L_{q,w}(\RR)}
		\le 
		\sum_{k > k(n)} \big\|\Delta_k (f)\big\|_{ L_{q,w}(\RR)}
		 \\
		&\ll
		\sum_{k > k(n)}  2^{-  r_{\lambda,p,q} k} \|f\|_{W^r_{p,w}(\RRd)} \\
		&\le
		2^{-  r_{\lambda,p,q} k(n)} \sum_{k > k(n)}  2^{-  r_{\lambda,p,q} (k - k(n))} 
		\ll
		n^{- r_{\lambda,p,q}}, 
	\end{split}
\end{equation}
	which proves the upper bound in \eqref{rho_n-d=1}.
		 
In order to prove the lower bound in the case $p < q$ in \eqref{rho_n-d=1} we employ the inequality \eqref{rho_n>}. Let $\brab{m_1,...,m_n} \subset \RR$ be arbitrary $n$ points. For a given $n \in \NN$, we put $\delta = n^{1/\lambda - 1}$ and $t_j = \delta j$, $j \in \NN_0$. Then there is $i \in \NN$ with 
$n + 1 \le  i \le 2n + 2$ such that the interval $(t_{i-1}, t_i)$ does not contain any point from the set $\brab{m_1,...,m_n}$.
Take a nonnegative function $\varphi \in C^\infty_0([0,1])$, $\varphi \not= 0$, and put
	\begin{equation*}\label{b_s}
 b_s :=  \|\varphi^{(s)}(y)\|_{L_p([0,1])}, \ s = 0,1,...,r.
\end{equation*}		
Define the functions $g$ and $h$ on $\RR$ by
\begin{equation*}\label{g}
g(x):= 
\begin{cases}
\varphi(\delta^{-1}(x - t_{i-1})), & \ \ x \in (t_{i-1}, t_i), \\
0, & \ \ \text{otherwise},	
\end{cases}	
\end{equation*}
and
	\begin{equation*}\label{h}
		h(x):= (gw^{-1})(x).		
	\end{equation*}
Let us estimate the norm $\norm{h}{W^r_{p,w}(\RR)}$. For a given $k \in \NN_0$ with $0 \le k \le r$,  we have
	\begin{equation}\label{h^(s)}
	h^{(k)} = (gw^{-1})^{(k)} = \sum_{s=0}^k \binom{k}{s} g^{(k-s)}(w^{-1})^{(s)}. 		
\end{equation}
By a direct computation we find that   for $x \in \RR$,
	\begin{equation}\label{w^(s)}
(w^{-1})^{(s)}(x) = (w^{-1})(x) (\sign (x))^s\sum_{j=1}^s c_{s,j}(\lambda,a) |x|^{\lambda_{s,j}},
\end{equation}
where  $\sign (x):= 1$ if $x \ge 0$, and $\sign (x):= -1$ if  $x < 0$, 
\begin{equation}\label{lambda_{s,s}}
\lambda_{s,s} = s(\lambda - 1) > \lambda_{s,s-1} > \cdots > \lambda_{s,1} = \lambda - s,
\end{equation}
 and  $c_{s,j}(\lambda,a)$ are polynomials in the variables $\lambda$ and $a$ of degree at most $s$ with respect to each variable.
Hence, we obtain
	\begin{equation}\label{h^(s)w(x)}
	h^{(k)}(x) w(x)= \sum_{s=0}^k \binom{k}{s} g^{(k-s)}(x) (\sign (x))^s\sum_{j=1}^s c_{s,j}(\lambda, a) |x|^{\lambda_{s,j}} 		
\end{equation}
which implies that 
	\begin{equation}\nonumber
	\int_{\RR}|h^{(k)} w|^p(x) \rd x 
	\le  C \max_{0\le s \le k} \ \max_{1 \le j \le s}  \int_{t_{i-1}}^{t_i} |x|^{p\lambda_{s,j}}|g^{(k-s)}(x)|^p \rd x.		
\end{equation}
From \eqref{lambda_{s,s}},  the inequality $n^{1/\lambda} \le x \le (2n + 2)n^{1/\lambda - 1}$ for $x \in [t_{i-1}, t_i]$, and 
\begin{equation*}\label{int-g^(k-s)}
  \int_{t_{i-1}}^{t_i} |g^{(k-s)}(x)|^p \rd x = b_{k-s}^p \delta^{-p(k-s-1/p)} 
  = 	b_{k-s}^p n^{p(k-s-1/p)(1 - 1/\lambda)},
\end{equation*}
we derive
	\begin{equation*}\label{int-h^(s)w}
		\begin{aligned}
	\int_{\RR}|h^{(k)} w|^p(x) \rd x 
	&\le  C \max_{0\le s \le k}   \int_{t_{i-1}}^{t_i} |x|^{p\lambda_{s,s}}|g^{(k-s)}(x)|^p \rd x
	\\
	& \le  C \max_{0\le s \le k}   \brac{n^{1/\lambda}}^{ps(\lambda - 1)}
	\int_{t_{i-1}}^{t_i}|g^{(k-s)}(x)|^p \rd x
	\\& 
	\le  C \max_{0\le s \le k} n^{ps(\lambda - 1)/\lambda}		 n^{p(k-s-1/p)(1 - 1/\lambda)}
	\\
	&= C n^{p(1-1/\lambda)(k-1/p)} \le  C n^{p(1-1/\lambda)(r-1/p)}.		
	\end{aligned}
\end{equation*}
%Obviously, we have also  $\int_{\RR}|hw|(x) \rd x  \le C n^{(1-1/\lambda)(k-1)}$. 
If we define 
	\begin{equation*}\label{h-bar}
	\bar{h}:=  C^{-1} n^{-(1-1/\lambda)(r-1/p)}h,
\end{equation*}
then 
$\bar{h} \in \bW^r_{p,w}(\RR)$, $\supp (\bar{h}) \subset (t_{i-1},t_i)$ and
	\begin{equation*}\label{int-h^(s)w2}
	\begin{aligned}	
		\int_{\RR}|(\bar{h}w)(x)|^q \rd x 
	&=  C^{-q} n^{-q(1-1/\lambda)(r-1/p)}\int_{t_{i-1}}^{t_i}|g(x)|^q \rd x
	\\
	&
	=  C^{-q} n^{-q(1-1/\lambda)(r-1/p)} \delta \int_0^1|\varphi(x)|^q \rd x  \gg n^{-q(1-1/\lambda)(r-1/p+1/q)}
	\end{aligned}
\end{equation*}
 Since the interval $(t_{i-1}, t_i)$ does not contain any point from the set $\brab{m_1,...,m_n}$, we have $\bar{h}(m_k) = 0$, $k = 1,...,n$. Hence, by the inequality \eqref{rho_n>},
%  and \eqref{int-h^(s)w},
	\begin{equation}\nonumber
		\varrho_n\big(\bW^r_{p,w}(\RR), L_{q,w}(\RR)\big) 	\ge  \|\bar{h}\|_{L_{p,w}(\RR)}
	 \gg  n^{- r_{\lambda,p,q}}.
\end{equation}
The lower bound in \eqref{rho_n-d=1} in the case $p < q$ is proven.

We now prove the lower bound in \eqref{rho_n-d=1} in the case $p \ge q$.   By  \eqref{rho_n > d_n} we have
	\begin{equation}\label{rho_n>d_n}
	\varrho_n\big(\bW^r_{p,w}(\RR), L_{q,w}(\RR)\big) 	
	\ge 
	d_n\big(\bW^r_{p,w}(\RR), L_{q,w}(\RR)\big),
\end{equation}	
hence  it is sufficient to show that for $p \ge q$,
\begin{equation}\label{d_n>}
d_n\big(\bW^r_{p,w}(\RR), L_{q,w}(\RR)\big) 	
\gg  n^{- r_{\lambda,p,q}}.
\end{equation}
From Lemma \ref{lemma:B-NInequality}(i) we derive the inequality
	\begin{equation}\nonumber
	\|\varphi\|_{W^r_{p,w}(\RR)} 
	\ll
	m^{r_\lambda}\|\varphi\|_{L_{p,w}(\RR)}	 	 
	\ \ \ \
	\forall  \varphi \in \Pp_{m+1}, \ \ \forall m \in \NN.  
\end{equation}
This implies the inclusion 
$$
Cm^{-r_\lambda}\Bb^*_{N(m),p} \subset \bW^r_{p,w}(\RR)
$$ 
for some constant $C >0$ independent of $m$, where $N(m):=2j(m)$ and
$$
\Bb^*_{N(m),p}:= \brab{\varphi \in \Pp^*_{m+1}: \|\varphi\|_{L_{p,w}(\RR)} \le 1}
$$
is the unit ball of $N(m)$-dimensional subspace $L^*_{N(m),p}:= \Pp^*_{m+1}\cap L_{p,w}(\RR)$ in $L_{p,w}(\RR)$.
Given $n \in \NN$, let $N:=N(\bar{m}):= \min\brab{N(m): \, N(m) \ge 2n}$. Due to \eqref{<j(m)<} we have
\begin{equation} \label{N><}
2n\le N \asymp \bar{m} \asymp n.
\end{equation} 
Hence,
\begin{equation}\label{d_n-ge}
	d_n\big(\bW^r_{p,w}(\RR), L_{q,w}(\RR)\big) 	
	\ge 	d_n\big(C \bar{m}^{-r_\lambda} \Bb^*_{N,p}, L^*_{N,q}\big) 
	\asymp 	n^{-r_\lambda}d_n \big(\Bb^*_{N,p}, L^*_{N,q}\big).
\end{equation}
For $1\le \nu < \infty$ and $M \in \NN$, denote by $\ell^M_\nu$ the normed space of $\bx=(x_j)_{j=1}^M$ with $x_j \in \RR$, equipped with the norm
$$
\norm{\bx}{\ell^M_\nu}:= \brac{\sum_{j=1}^M |x_j|^\nu}^{1/\nu},
$$
and by $B^M_\nu$ the unit ball in $\ell^M_\nu$.
From Lemma \ref{lemma:MarcinkiewiczInequality} it follows that for $\nu = p,q$, 
 there holds the norm  equivalence  
\begin{equation}\label{}
	\|\varphi\|_{L_{w}^\nu(\RR)} 
	\asymp
	m^{-(1 - 1/\lambda)/\nu}	\|\bvarphi\|_{\ell^N_\nu} \ \ 
	\varphi \in \Pp^*_{m+1}, \ \ \forall m \in \NN, 
\end{equation}
where  $\bvarphi:=\big(\varphi(x_{m,k}) w(x_{m,k})\big)_{|k|\le j(\bar{m})}$.
Hence by \eqref{N><} and the well-known equality 
\begin{equation}\label{d_n=}
	d_n\big(B^N_p, \ell^N_q\big) 	
	=
	(N-n)^{1/q - 1/p},  \ \ N > n \ (p \ge q)
\end{equation}
 (see, e.g., \cite[Page 232]{Tikh1976}) we  deduce that
\begin{equation}\nonumber
	\begin{split}
	d_n \big(\Bb^*_{N,p}, L^*_{N,q}\big)
	&\asymp 
		n^{(1 - 1/\lambda)(1/p -  1/q)}	d_n\big(B^N_p, \ell^N_q\big)\\
		&= n^{(1 - 1/\lambda)(1/p -  1/q)}(N-n)^{1/q - 1/p} \asymp 
		n^{\delta_{\lambda,p,q}}.
	\end{split}
\end{equation}
This together with \eqref{rho_n>d_n}, \eqref{d_n>} and \eqref{d_n-ge} proves the lower bound in \eqref{rho_n-d=1}  in the case $p \ge q$. 
	\hfill
\end{proof}
%%%%%%%%%%%%%%%%%%%%%%%%%%%

%We have proven the right convergence rate of the sampling $n$-widths $\varrho_n\big(\bW^r_{p,w}(\RR),  L_{q,w}(\RR)\big)$ in the case when $1< p < \infty$ and $1\le q \le \infty$. 

By using a similar  technique with some modification, we now give upper and lower bounds for $\varrho_n\big(\bW^r_{p,w}(\RR),  L_{q,w}(\RR)\big)$  in the case when $p = \infty$ and $1\le q \le \infty$ and when the generating weights $w$ and $v$ are of the form:
\begin{equation} \label{w(x),p=infty}
	w(x):= 	\exp \brac{- a|x|^\lambda + b},
	\ \ \lambda > 1, \ a > 0, \ b \in \RR,
\end{equation} 
and 
\begin{equation} \label{v(x),p=infty}
	v(x):= 	 w^2(x) = \exp \brac{- 2a|x|^\lambda + 2b}, 
\end{equation}
where $\lambda , a, b$ are the same as in  \eqref{w(x),p=infty}. 
%%%%%%%%%%%%%%%%%%%%%%%%%

\begin{lemma} \label{lemma:Delta_k-p=infty }
	Let  $1\le q \le \infty$ and $r_{\lambda,\infty,q} >0$. Let the generating weights $w$ and $v$ be given by \eqref{w(x),p=infty} and \eqref{v(x),p=infty}.  Then we have that
	for every $ f \in W^r_{\infty,w}(\RR)$,
	\begin{equation}\label{Series2-p=infty}
		f
		= 	\sum_{k \in \NN_0}\Delta_k(f)
	\end{equation}
	with absolute convergence in the space $L_{q,w}(\RR)$  of the series. 
	Moreover,
	\begin{equation}\label{Delta_kf-p=infty}
		\big\|\Delta_k (f)\big\|_{L_{q,w}(\RR)}  
		\ll 2^{-  r_{\lambda,\infty,q} k} k\,\|f\|_{W^r_{\infty,w}(\RR)}, 
		\ \  k \in \NN_0.
	\end{equation}
\end{lemma}

\begin{proof}
	We first prove the inequality
		\begin{equation}\label{|f - I_m f|<-p=infty}
		\|f - I_m (f)\|_{L_{w}^\infty(\RR)}  
		\ll
			m^{-r_\lambda} \log m \,\|f \|_{W^r_{\infty,w}(\RR)}, \ \  f \in W^r_{\infty,w}(\RR).  
	\end{equation}
For $1 \le p \le \infty$ and $f \in L_{p,w}(\RR)$, we define 	
		\begin{equation}\label{}
	E_m(f)_{w,p}:= \inf_{\varphi \in \Pp_m}	\|f - \varphi\|_{L_{p,w}(\RR)}.  
	\end{equation}
	Then there holds the inequality \cite{DM2003}
			\begin{equation}\label{}
		E_m(f)_{w,p} 
		\ll 
	m^{-r_\lambda}  \|f \|_{W^r_{p,w}(\RR)}, \ \  f \in W^r_{p,w}(\RR).  
	\end{equation}
	On the other hand, by \cite[Theorem 3.2]{MN2010}
		\begin{equation}\label{|f - I_m f|<-p=infty2}
	\|f - I_m (f)\|_{L_{w}^\infty(\RR)}  
	\ll
	\log m \, 	E_{m'}(f)_{w,\infty} + e^{-cm}\|f \|_{L_{w}^\infty(\RR)},
\end{equation}
where $m'= \left\lfloor \brac{\frac{\rho}{\rho + 1}}^\lambda m \right\rfloor \asymp m$	and $c$ is a positive constant independent of $m$ and $f$.
From the last two inequalities we easily deduce \eqref{|f - I_m f|<-p=infty}. Now by using \eqref{|f - I_m f|<-p=infty}, we can prove the lemma in the same way as the proof of Lemma \ref{lemma:Delta_k}.
	\hfill	
\end{proof}	
%%%%%%%%%%%%%%%%%%%%%%%%%%%%%%
\begin{theorem} \label{thm:I_m,p=infty}
	Let  $1\le q \le \infty$ and $r_{\lambda,\infty,q} >0$. Let the generating weights $w$ and $v$ be given by \eqref{w(x),p=infty} and \eqref{v(x),p=infty}.  For any $n \in \NN$, let $k(n)$ be the largest integer such that $2^{k(n)}  \le n$. Then we have 
	\begin{equation}\label{rho_n-d=1,p=infty}
			n^{- r_{\lambda,\infty,q}}
			\ll
		\varrho_n\big(\bW^r_{\infty,w}(\RR),  L_{q,w}(\RR)\big) 
		\le
		\sup_{f\in \bW^r_{\infty,w}(\RR)} \big\|f - I_{m_{k(n)}}(f)\big\|_{ L_{q,w}(\RR)} 
		\ll
		n^{- r_{\lambda,\infty,q}} \log n.
	\end{equation}
\end{theorem}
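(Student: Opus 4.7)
I would split the argument into an upper bound that reuses the telescoping decomposition of Lemma~\ref{lemma:Delta_k^I-p=infty }, and a lower bound that follows the Kolmogorov-width route used in the $p \ge q$ case of Theorem~\ref{thm:I_m}.

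For the upper bound, since $I_{m_{k(n)}}$ uses $2 j(m_{k(n)}) \le m_{k(n)} + 1 \le 2^{k(n)} \le n$ sample points, it belongs to $\Ss_n$, so the middle quantity dominates $\varrho_n$ by definition of the sampling width.  Summing the identity $\Delta_k^I = I_{m_k} - I_{m_{k-1}}$ up to $k = k(n)$ gives $I_{m_{k(n)}} = \sum_{k \le k(n)} \Delta_k^I$, and hence, by the decomposition \eqref{Series2-p=infty}, $f - I_{m_{k(n)}}(f) = \sum_{k > k(n)} \Delta_k^I(f)$ for every $f \in \bW^r_{\infty,w}(\RR)$. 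The triangle inequality together with \eqref{Delta_k^If-p=infty} yields
\begin{equation*}
\big\|f - I_{m_{k(n)}}(f)\big\|_{L_{q,w}(\RR)} \ll \sum_{k > k(n)} k\, 2^{-r_{\lambda,\infty,q} k} \ll k(n)\, 2^{-r_{\lambda,\infty,q} k(n)} \ll n^{-r_{\lambda,\infty,q}} \log n,
\end{equation*}
where I used $r_{\lambda,\infty,q} > 0$ to sum the geometric series; the only difference from the argument in Theorem~\ref{thm:I_m} is the extra $\log n$ factor, inherited from the $k$-factor in \eqref{Delta_k^If-p=infty} (itself coming from the $\log m$ loss in \eqref{|f - I_m f|<-p=infty2}).

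For the lower bound I would mimic the $p \ge q$ lower-bound argument of Theorem~\ref{thm:I_m} with $p = \infty$. Applying the Bernstein-type inequality Lemma~\ref{lemma:B-NInequality}(i) $r$ times gives $\|\varphi\|_{W^r_{\infty,w}(\RR)} \ll m^{r_\lambda}\|\varphi\|_{L_{\infty,w}(\RR)}$ for every $\varphi \in \Pp^*_{m+1}$, whence the inclusion $C \bar m^{-r_\lambda}\, \Bb^*_{N,\infty} \subset \bW^r_{\infty,w}(\RR)$, with $\Bb^*_{N,\infty}$ the unit ball of $L^*_{N,\infty} := \Pp^*_{\bar m+1}\cap L_{\infty,w}(\RR)$ and $\bar m$ chosen, exactly as in Theorem~\ref{thm:I_m}, so that $N := 2 j(\bar m) \ge 2n$ and $N \asymp \bar m \asymp n$. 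From $\varrho_n \ge d_n$ and the inclusion,
\begin{equation*}
\varrho_n\big(\bW^r_{\infty,w}(\RR), L_{q,w}(\RR)\big) \gg n^{-r_\lambda}\, d_n\big(\Bb^*_{N,\infty}, L^*_{N,q}\big).
\end{equation*}
The final ingredient is the Marcinkiewicz-type equivalence at $p = \infty$,
\begin{equation*}
\|\varphi\|_{L_{\infty,w}(\RR)} \asymp \max_{|k| \le j(\bar m)} |\varphi(x_{\bar m,k}) w(x_{\bar m,k})|, \qquad \varphi \in \Pp^*_{\bar m+1},
\end{equation*}
the natural $p = \infty$ counterpart of Lemma~\ref{lemma:MarcinkiewiczInequality}, which together with the $L^q$ version of that lemma identifies the map $\varphi \mapsto (\varphi(x_{\bar m,k}) w(x_{\bar m,k}))_{|k|\le j(\bar m)}$ as an isomorphism sending $\Bb^*_{N,\infty}$ to a set equivalent to the cube $B^N_\infty$ and transforming the $L_{q,w}(\RR)$ norm to one equivalent to $n^{-(1-1/\lambda)/q}\|\cdot\|_{\ell^N_q}$. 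The classical formula $d_n(B^N_\infty, \ell^N_q) = (N - n)^{1/q} \asymp n^{1/q}$ (Stesin) then gives $d_n(\Bb^*_{N,\infty}, L^*_{N,q}) \asymp n^{1/(\lambda q)}$, so that $\varrho_n \gg n^{-r_\lambda + 1/(\lambda q)} = n^{-r_{\lambda,\infty,q}}$, covering all $1 \le q \le \infty$ uniformly (with the correct convention $1/(\lambda \infty) = 0$).

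The main obstacle is the direction $\|\varphi\|_{L_{\infty,w}(\RR)} \ll \max_{|k|} |\varphi(x_{\bar m,k}) w(x_{\bar m,k})|$ of the Marcinkiewicz equivalence on $\Pp^*_{\bar m+1}$; this is a Chebyshev-type sup estimate for Freud-weighted polynomials that is not covered verbatim by Lemma~\ref{lemma:MarcinkiewiczInequality} and requires either invoking the corresponding results for pure Freud weights from \cite{MN2010,JMN2021} or a dedicated argument exploiting the vanishing of $\varphi \in \Pp^*_{\bar m+1}$ at $\pm a_{\bar m}$; this is precisely the reason for restricting to the simpler weight form \eqref{w(x),p=infty} (no $|x|^\tau (1+|x|)^\eta$ factors) in the hypothesis of the theorem. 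The logarithmic gap between upper and lower bounds is intrinsic to the method and reflects the unboundedness of Lagrange-type interpolation on $L^\infty$.
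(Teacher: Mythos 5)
Your proposal follows essentially the same route as the paper's proof: the upper bound is obtained exactly as in Theorem~\ref{thm:I_m} with Lemma~\ref{lemma:Delta_k^I-p=infty } in place of Lemma~\ref{lemma:Delta_k^I} (the $k$-factor producing the extra $\log n$), and the lower bound repeats the $p\ge q$ argument of Theorem~\ref{thm:I_m} with $p=\infty$ (Bernstein inequality, $\varrho_n\ge d_n$, Marcinkiewicz-type discretization, and the finite-dimensional width formula). Your closing remark that the sup-norm (and endpoint $q$) version of the Marcinkiewicz equivalence is not literally covered by Lemma~\ref{lemma:MarcinkiewiczInequality} and must be imported from \cite{MN2010,JMN2021} correctly pinpoints the one detail the paper leaves implicit when it states that the lower bound ``can be proven in the same way.''
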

%%%%%%%%%%%%%%%
\begin{proof}
The proof of the upper bound in \eqref{rho_n-d=1,p=infty} is similar to the proof of the upper bound in \eqref{rho_n-d=1} of Theorem \ref{thm:I_m} with replacing Lemma \ref{lemma:Delta_k} by Lemma \ref{lemma:Delta_k-p=infty }. The lower bound in \eqref{rho_n-d=1,p=infty} can be proven in the same way as the proof of  the lower bound \eqref{rho_n-d=1} of Theorem~\ref{thm:I_m} in the case $p \ge q$.
	\hfill	
\end{proof}

%\begin{remark} \label{Comment on G-quadrature}
%{\rm	
%In the case when $w(x)= \exp (- x^2/2)$ is the Gaussian density, the truncated Gaussian quadratures $Q^{\rm{TG}}_{2 j(m)}$ in Theorem \ref{thm:I_n-d=1} give
%\begin{equation}\label{I_n-d=1-tau=2}
%		\sup_{f\in {\bW}^r_{1,w}(\RR)} \bigg|\int_{\RR}f(x) w(x) \rd x - Q^{\rm{TG}}_{2 j(m_n)}f\bigg| 
%\asymp
%	\varrho_n\big({\bW}^r_{1,w}(\RR)\big) 
%	\asymp 
%	n^{-r/2}.
%\end{equation}
%On the other hand, for the full Gaussian quadratures $Q^{\rm G}_n$,  it has been proven in \cite[Proposition 1]{DM2003}	the convergence rate
%\begin{equation}\nonumber
%	\sup_{f\in \bW^1_{1,w}(\RR)}	\bigg|\int_{\RR}f(x) w(x) \rd x  - Q^{\rm G}_nf\bigg|
%	\asymp
%	n^{-1/6}
%\end{equation}
%which is much worse than the convergence rate of 
%$	\varrho_n\big(\bW^1_{1,w}(\RR)\big) \asymp 	n^{-1/2}$ as 
%in \eqref{I_n-d=1-tau=2} for $r = 1$.
%}
%\end{remark}
%in comparing with 
%	\begin{equation}\label{Polynomial-approxL_1}
	%	\sup_{f\in \bW^1_1(\RR,w_2)}	\inf_{P \in \Pp_m} \norm{f - P}{L_1(\RR,w_2)}
	%	\asymp
	%	m^{-1/2},
	%\end{equation}
	%which is a particular case of Proposition 1 in Appendix.
%%%%%%%%%%%%%%%%%%%%%%%%%%%%%%%%%%%%%
%%%%%%%%%%%%%%%%%%%%%%%%%%%%%%%%%%%%%%
%%%%%%%%%%%%%%%%%%%%%%%%%%%%%%%%%

%%%%%%%%%%%%%%%%%%%%%%%%%%%%%%%%%
%%%%%%%%%%%%%%%%%%%%%%%%%%%%%%%%%

	\section{Sparse-grid sampling recovery in high dimension}
\label{Multivariate sampling recovery}

In this section, in the high dimensional case ( $d \ge 2$), we establish upper and lower bounds of   $\varrho_n\big(\bW^r_{p,w}(\RRd), L_{q,w}(\RRd)\big)$ and construct linear sampling  algorithms based on  step-hyperbolic-cross grids of sparse sampled points which realize the upper bounds. 

For $\bx \in \RRd$ and $e \subset \brab{1,...,d}$, let $\bx^e \in \RR^{|e|}$ be defined by $(x^e)_i := x_i$, and  $\bar{\bx}^e\in \RR^{d-|e|}$ by $(\bar{x}^e)_i := x_i$, $i \in \brab{1,...,d} \setminus e$. With an abuse we write 
$(\bx^e,\bar{\bx}^e) = \bx$.
For the proof of the following lemma, see \cite[Lemma 3.2]{DD2023}. 

\begin{lemma} \label{lemma:g(bx^e}
	Let $1\le p \le \infty$,  $e \subset \brab{1,...,d}$ and $\br \in \NNd_0$. Assume that $f$ is a function on $\RRd$  such that for every $\bk \le \br$, $D^\bk f \in L_{p,w}(\RRd)$. 
	Put for  $\bk \le \br$ and $\bar{\bx}^e \in \RR^{d-|e|}$,
	\begin{equation*}\label{g(bx^e}
	g(\bx^e): =  D^{\bar{\bk}^e} f(\bx^e,\bar{\bx}^e).
	\end{equation*}
Then $D^\bs g \in L_{p,w}(\RR^{|e|})$ for every $\bs \le \bk^{e}$ and almost every 
$\bar{\bx}^e \in \RR^{d-|e|}$.
\end{lemma}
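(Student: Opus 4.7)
The plan is to reduce the claim to a single multi-index on $\RRd$ and then exploit the tensor-product structure of $w$ together with Fubini's theorem. Define $\bt \in \NN_0^d$ by $\bt^e = \bs$ and $\bar{\bt}^e = \bar{\bk}^e$. Since $\bs \le \bk^e$, we have $\bt \le \bk \le \br$, so by hypothesis $D^\bt f \in L_{p,w}(\RRd)$. The first observation I would make is that, under the weak-derivative definition tested against separable test functions $\Psi(\bx) = \varphi(\bx^e)\chi(\bar{\bx}^e)$ with $\varphi \in C^\infty_c(\RR^{|e|})$ and $\chi \in C^\infty_c(\RR^{d-|e|})$, one has the identity
\begin{equation*}
\int_{\RRd} D^\bt f(\bx)\, \varphi(\bx^e)\chi(\bar{\bx}^e)\, \rd\bx
\;=\; (-1)^{|\bs|}\int_{\RRd} D^{\bar{\bk}^e} f(\bx)\, D^\bs\varphi(\bx^e)\,\chi(\bar{\bx}^e)\, \rd\bx ,
\end{equation*}
which follows by applying the weak-derivative identity in $\RRd$ twice (once to integrate the $\bs$-derivatives by parts using $D^\bt f$, once using $D^{\bar{\bk}^e}f$).

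Next, because $w(\bx) = w(\bx^e)\,w(\bar{\bx}^e)$ is a tensor product (see \eqref{w(bx)d}), Fubini applied to the finite integral $\int_{\RRd} |D^\bt f \cdot w|^p\, \rd\bx$ yields, for almost every $\bar{\bx}^e \in \RR^{d-|e|}$,
\begin{equation*}
\int_{\RR^{|e|}} \bigl|D^\bt f(\bx^e, \bar{\bx}^e)\, w(\bx^e)\bigr|^p\, \rd\bx^e \;<\; \infty ,
\end{equation*}
so that the slice $\bx^e \mapsto D^\bt f(\bx^e, \bar{\bx}^e)$ lies in $L_{p,w}(\RR^{|e|})$. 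The same argument applied to $D^{\bar{\bk}^e} f$ shows that $g(\bx^e) = D^{\bar{\bk}^e} f(\bx^e, \bar{\bx}^e)$ is locally integrable on $\RR^{|e|}$ for a.e.\ $\bar{\bx}^e$.

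It remains to identify the slice of $D^\bt f$ as the weak derivative $D^\bs g$. Fix a countable dense family $\{\varphi_j\} \subset C^\infty_c(\RR^{|e|})$. For each $j$, Fubini rewrites the displayed identity of the first paragraph as
\begin{equation*}
\int_{\RR^{d-|e|}} \chi(\bar{\bx}^e)\, \Bigl[\, I_j(\bar{\bx}^e) - (-1)^{|\bs|} J_j(\bar{\bx}^e)\,\Bigr]\, \rd\bar{\bx}^e \;=\; 0
\end{equation*}
for every $\chi \in C^\infty_c(\RR^{d-|e|})$, where $I_j(\bar{\bx}^e) := \int D^\bt f(\bx^e,\bar{\bx}^e)\varphi_j(\bx^e)\, \rd\bx^e$ and $J_j(\bar{\bx}^e) := \int g(\bx^e)D^\bs \varphi_j(\bx^e)\, \rd\bx^e$. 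By the fundamental lemma of the calculus of variations, $I_j = (-1)^{|\bs|} J_j$ almost everywhere; taking the intersection over $j$ gives a conull set of $\bar{\bx}^e$ on which $\int D^\bt f(\bx^e, \bar{\bx}^e)\varphi(\bx^e)\rd\bx^e = (-1)^{|\bs|}\int g(\bx^e) D^\bs\varphi(\bx^e) \rd\bx^e$ for every $\varphi \in C^\infty_c(\RR^{|e|})$. This is precisely the definition of $D^\bs g(\bx^e) = D^\bt f(\bx^e, \bar{\bx}^e)$ in the weak sense, and combined with the $L_{p,w}$-bound above this proves the lemma.

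The main obstacle I anticipate is the last step: passing from the $\chi$-averaged identity on $\RRd$ to a pointwise a.e.\ weak-derivative identity on slices. Handling it cleanly requires the countable-density trick to ensure that a single conull set works simultaneously for all test functions $\varphi$; once this is set up, the rest of the argument is a direct application of Fubini and integration by parts.
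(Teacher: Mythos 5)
Your argument is correct and is the standard Fubini--slicing proof of this lemma; the paper itself gives no proof here, deferring to \cite[Lemma 3.2]{DD2023}, and your route (reduce to the single multi-index $\bt$ on $\RRd$, integrate by parts against separated test functions, apply Fubini using the tensor-product structure of $w$, and identify the slice of $D^{\bt}f$ with $D^{\bs}g$ via the fundamental lemma of the calculus of variations over a countable dense family of test functions) is exactly that argument. The only point worth adding is the endpoint $p=\infty$, where $L_{\infty,w}$ is defined as $C_w$ (continuity plus decay of $gw$) rather than by an essential supremum, so the Fubini step should be supplemented by passing to the continuous representative guaranteed by the paper's standing convention $W^r_{p,w}(\RRd)\subset C_w(\RRd)$.
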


%	Assume that there exists a sequence of sampling operators $\brac{S_{2^k}}_{k \in \NN_0}$ with
%\begin{equation}\label{I_2^kf}
%	S_{2^k}f: = \sum_{s=1}^{2^k}  f(x_{k,s}) \phi_{k,s}, \ \ \{x_{k,1},\ldots,x_{k,2^k}\}\subset \RR,
%\end{equation}
%such that 
%\begin{equation}\label{IntErrorR}
%	\big\|f - S_{2^k} f\big\|_{ L_{q,w}(\RR)} 
%	\leq C 2^{- ak} \|f\|_{W^r_{p,w}(\RR)}, 
%	\ \  \ k \in \NN_0,  \ \ f \in W^r_{p,w}(\RR), 
%\end{equation}
%for some  number $a>0$ and constant $C>0$. 
%

Based on the operators $\Delta_k, \ k \in \NN_0$, defined in \eqref{Delta}, we construct sampling operators on $\RRd$ by using the well-known Smolyak algorithm. For convenience, with an abuse we make use of the notation:
$$
S_{2^k}:= I_{m_k} \in \Ss_{2^k}, \ \  k \in \NN_0,
$$
where we recall that the sampling operator $I_m$ is given as in \eqref{I_m f}, and  $m_k$ is the largest number such that $m_k + 1 \le 2^k$.  
Then we have
$$
\Delta_k =  S_{2^k} - S_{2^{k-1}}, \  \ k \in \NN_0.
$$
We also define for $k \in \NN_0$, the one-dimensional operators
\begin{equation*}\label{E}
	E_k f:= f - S_{2^k} (f), \ \ \ k \in \NN_0.
\end{equation*}
For $\bk \in \NNd$, the $d$-dimensional operators $S_{2^\bk}$,  $\Delta_\bk$ and $E_\bk$ are defined as the tensor  product of one-dimensional operators:
\begin{equation}\label{tensor-product}
	S_{2^\bk}:= \bigotimes_{i=1}^d S_{2^{k_i}} , \ \	
	\Delta_\bk:= \bigotimes_{i=1}^d \Delta_{k_i}, \ \ 	
	E_\bk:= \bigotimes_{i=1}^d E_{k_i}, 
\end{equation}
where $2^\bk:= (2^{k_1},\cdots, 2^{k_d})$ and 
the univariate operators $S_{2^{k_j}}$, $\Delta_{k_j}$ and $E_{k_j}$ 
 are successively applied to the univariate functions $\bigotimes_{i<j} S_{2^{k_i}}(f)$, $\bigotimes_{i<j} \Delta_{k_i}(f)$ and $\bigotimes_{i<j} E_{k_i} $, respectively, by considering them  as 
functions of  variable $x_j$ with the other variables held fixed. The operators $S_{2^\bk}$, $\Delta_\bk$ and $E_\bk$ are well-defined for continuous functions on $\RRd$, in particular for ones from $W^r_{p,w}(\RRd)$.

Notice that  $S_{2^\bk}$ is a  sampling operator on $\RRd$  given by
\begin{equation}\label{I_2^bkf}
	S_{2^\bk}f = \sum_{\bs=\bone}^{m_\bk} f(\bx_{m_\bk,\bs}) \varphi_{m_\bk,\bs} , 
	\ \ \{\bx_{m_\bk,\bs}\}_{\bone \le \bs \le m_\bk}\subset \RRd,
\end{equation}
where $$
m_\bk:= \brac{m_{k_1},...,m_{k_d}}, \ \ \ 
\bx_{m_\bk,\bs}:= \brac{x_{m_{k_1},s_1},...,x_{m_{k_d},s_d}}, \ \ \ 
\varphi_{m_\bk,\bs}:= \bigotimes_{i=1}^d \varphi_{m_{k_i},s_i} ,
$$
 and the summation $\sum_{\bs=\bone}^{m_\bk}$ means that the sum is taken over all $\bs$ such that $\bone \le \bs \le m_\bk$. Hence we derive that 
\begin{equation}\label{Delta_bk}
\Delta_\bk f =  \sum_{e \subset \brab{1,...,d}} (-1)^{d - |e|}S_{2^{\bk(e)}} f
	= \sum_{e \subset \brab{1,...,d}} (-1)^{d - |e|}\sum_{\bs=\bone}^{m_{\bk(e)}} 
	f(\bx_{\bk(e),\bs}) \varphi_{\bk(e),\bs} , 
\end{equation}
where  $\bk(e) \in \NNd_0$ is defined by $k(e)_i = k_i$, $i \in e$, and 	$k(e)_i = \max(k_i-1,0)$, $i \not\in e$.  
%We also have
%\begin{equation*}
%	E_\bk (f )(\bx)=  \sum_{e \subset \brab{1,...,d}} (-1)^{|e|} 
%S_{2^{\bk^e}}(f(\cdot,\bar{\bx^e}))(\bx^e) 
%\end{equation*}

%Notice that as mappings from $C(\RRd)$ to $\RR$, the operators $S_{2^\bk}$, $\Delta_\bk$ and $E_\bk$  possess commutative and associative properties  with respect to applying the component operators $S_{2^{k_j}}$, $\Delta_{k_j}^S$ and $E_{k_j}^S$ in the following sense.  We have  for any $e \subset \brab{1,...,d}$,
%$$
%S_{2^\bk}f= S_{2^{\bk^e}}\brac{S_{2^{\bar{\bk}^e}} f}, \ \ 
%\Delta_\bk^S f= \Delta_{\bk^e}^S \brac{\Delta_{\bar{\bk}^e}^S f}, \ \ 
%E_\bk^S f= E_{\bk^e}^S \brac{E_{\bar{\bk}^e}^S f},
%$$
%and for any reordered sequence $\brab{i(1),..., i(d)}$ of $\brab{1,...,d}$, 
%\begin{equation}\label{commutative}
%	S_{2^\bk} = \bigotimes_{j=1}^d S_{2^{k_{i(j)}}}, 
%	\ \	\Delta_\bk^S = \bigotimes_{j=1}^d \Delta_{k_{i(j)}}^S, \ \ 	
%	E_\bk^S = \bigotimes_{j=1}^d E_{k_{i(j)}}^S.
%\end{equation}
%  These properties directly follow from 
%\eqref{I_2^bkf}--\eqref{E_bk}.

\begin{lemma} \label{lemma:E_k}
		Let $1< p < \infty$, $1\le q \le \infty$ and $r_{\lambda,p,q} >0$.    Then we have that
	\begin{equation*}\label{E_k}
	\big\|E_\bk (f)\big\|_{L_{q,w}(\RRd)}  
		\leq C 2^{-r_{\lambda,p,q}|\bk|_1} \|f\|_{W^r_{p,w}(\RRd)}, 
		\ \  \bk \in \NNd_0, \ \ f \in W^r_{p,w}(\RRd).
	\end{equation*}
\end{lemma}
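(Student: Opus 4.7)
The plan is to decompose $E_\bk$ into a tensor-product sum of $\Delta$-operators and reduce the statement to a one-dimensional bound in each coordinate. Using the telescoping identity \eqref{I_{m_k}} in the univariate case we have $E_{k_i}=I-S_{2^{k_i}}=\sum_{j_i>k_i}\Delta_{j_i}^I$, and since operators acting on different variables commute, the tensor product structure \eqref{tensor-product} gives
\begin{equation*}
E_\bk \;=\;\sum_{\bj>\bk}\Delta_\bj,\qquad \Delta_\bj:=\bigotimes_{i=1}^d \Delta_{j_i}^I,
\end{equation*}
where $\bj>\bk$ is understood componentwise (with the natural reading at $k_i=0$). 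Since $r_{\lambda,p,q}>0$ guarantees summability of the geometric series, it suffices to prove the key auxiliary bound
\begin{equation*}
\|\Delta_\bj(f)\|_{L_{q,w}(\RRd)} \ll 2^{-r_{\lambda,p,q}|\bj|_1}\,\|f\|_{W^r_{p,w}(\RRd)}, \qquad \bj\in\NN_0^d,\ f\in W^r_{p,w}(\RRd). \qquad(\star)
\end{equation*}
Summing $(\star)$ over $\bj>\bk$ then gives the claim.

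To establish $(\star)$ I would first prove the $p=q$ version by iterating the univariate $L_p$-estimate and then upgrade to general $q$ via an iterated Nikolskii inequality. Concretely, in Step 1 I fix $\bar\bx^{\{1\}}$, view $\Delta_\bj(f)(\cdot,\bar\bx^{\{1\}})=\Delta_{j_1}^I[g_\bj(\cdot,\bar\bx^{\{1\}})]$ with $g_\bj=(\Delta_{j_2}^I\otimes\cdots\otimes\Delta_{j_d}^I)(f)$, and apply Lemma \ref{lemma:Delta_k^I} in the case $p=q$, which yields
\begin{equation*}
\bigl\|\Delta_{j_1}^I[g_\bj(\cdot,\bar\bx^{\{1\}})]\bigr\|_{L_{p,w}(\RR)} \ll 2^{-r_\lambda j_1}\, \|g_\bj(\cdot,\bar\bx^{\{1\}})\|_{W^r_{p,w}(\RR)}.
\end{equation*}
Raising to the $p$-th power and integrating against $\prod_{i\neq 1}w(x_i)^p\,d\bar\bx^{\{1\}}$, using Lemma \ref{lemma:g(bx^e} to justify Fubini and the exchange of $\Delta_{j_i}^I$ with $D_1^{k_1}$ (they act on disjoint variables), I then iterate the same estimate in the coordinates $x_2,\ldots,x_d$. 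Each iteration contributes a factor $2^{-r_\lambda j_i}$ and one extra sum over an index $k_i\le r$, and after $d$ steps I arrive at
\begin{equation*}
\|\Delta_\bj(f)\|_{L_{p,w}(\RRd)}^p \;\ll\; 2^{-r_\lambda p|\bj|_1}\sum_{|\bk|_\infty\le r}\|D^\bk f\|_{L_{p,w}(\RRd)}^p \;=\;2^{-r_\lambda p|\bj|_1}\|f\|_{W^r_{p,w}(\RRd)}^p.
\end{equation*}

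In Step 2 I exploit that $\Delta_\bj(f)$ is, for each fixed $\bar\bx^{\{i\}}$, a univariate polynomial in $x_i$ of degree $\le m_{j_i}+1\asymp 2^{j_i}$ (by construction of $\Delta_{j_i}^I$ and \eqref{Delta^I}). Applying the Nikolskii-type inequalities Lemma \ref{lemma:B-NInequality}(iii)–(iv) in the direction $x_i$ pointwise in the remaining variables gives
\begin{equation*}
\bigl\|\Delta_\bj(f)(\bar\bx^{\{i\}},\cdot)\bigr\|_{L_{q,w}(\RR)} \ll 2^{\delta_{\lambda,p,q} j_i}\,\bigl\|\Delta_\bj(f)(\bar\bx^{\{i\}},\cdot)\bigr\|_{L_{p,w}(\RR)}.
\end{equation*}
Iterating this one coordinate at a time and using Minkowski's integral inequality (for $p\le q$) or the reverse rearrangement (for $p>q$) to move an inner $L^p$-norm past outer $L^q$-norms so that the next univariate Nikolskii can be applied, I obtain after $d$ steps
\begin{equation*}
\|\Delta_\bj(f)\|_{L_{q,w}(\RRd)}\;\ll\; 2^{\delta_{\lambda,p,q}|\bj|_1}\,\|\Delta_\bj(f)\|_{L_{p,w}(\RRd)}.
\end{equation*}
Combining with Step 1 yields $(\star)$, and $r_\lambda-\delta_{\lambda,p,q}=r_{\lambda,p,q}$ produces the stated exponent.

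The main technical obstacle is Step 2 when $p\neq q$: the iterated Nikolskii inequality requires one to interchange $L^p$- and $L^q$-norms in different variables using Minkowski's integral inequality (which goes in one direction only depending on the sign of $q-p$), and the case $q=\infty$ needs the supremum to be interpreted through the continuous representative on $\RRd_\tau$. Step 1, while conceptually routine, still relies on Lemma \ref{lemma:g(bx^e} to guarantee that the slices $g_\bj(\cdot,\bar\bx^{\{1\}})$ belong to $W^r_{p,w}(\RR)$ for a.e.\ $\bar\bx^{\{1\}}$, so that the univariate Lemma \ref{lemma:Delta_k^I} is applicable there.
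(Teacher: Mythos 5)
Your quantitative core is correct, but your route inverts the paper's logical order and the reduction step you treat as formal is in fact the delicate point. The paper proves this lemma first and directly: the univariate error bound $\|E_k(f)\|_{L_{q,w}(\RR)}\ll 2^{-r_{\lambda,p,q}k}\|f\|_{W^r_{p,w}(\RR)}$ (obtained from Lemma \ref{lemma:I_m} together with the Nikol'skii step already packaged in Lemma \ref{lemma:Delta_k^I}) is iterated one coordinate at a time, using Lemma \ref{lemma:g(bx^e} to guarantee that slices lie in $W^r_{p,w}(\RR)$, with induction on $d$; the block estimate you call $(\star)$ is then deduced \emph{afterwards}, as Lemma \ref{lemma:Delta_k}, by inclusion--exclusion from the present lemma, and the tensorized Nikol'skii inequalities you want in Step 2 are exactly the mixed-norm inequalities recorded in the Appendix. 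Your Steps 1--2 do constitute an independent proof of $(\star)$ (a $p=q$ coordinatewise iteration plus a Nikol'skii upgrade, legitimate because $\Delta_\bj(f)\in\Pp_{2^\bj}$), so in effect you reprove Lemma \ref{lemma:Delta_k} without passing through $E_\bk$; that part is sound, modulo the same slice/commutation technicalities the paper also glosses.

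The genuine gap is the identity $E_\bk f=\sum_{\bj>\bk}\Delta_\bj f$ in $L_{q,w}(\RRd)$, which you assert from commutativity alone. In the paper this type of convergence statement is a \emph{consequence} of Lemma \ref{lemma:E_k} (it is how \eqref{Series1} and $S_{2^\bk}f\to f$ are proved), so you cannot borrow it. Concretely, the rectangular partial sums $\bigotimes_{i}\bigl(S_{2^{m_i}}-S_{2^{k_i}}\bigr)f$ differ from $E_\bk f$ by $2^d-1$ mixed tensor terms, each containing at least one univariate factor $E_{m_i}=I-S_{2^{m_i}}$; such a term is not a polynomial in the $i$-th variable, so your Nikol'skii upgrade cannot be applied to it, and absolute summability of the blocks only gives convergence of the series to \emph{some} limit — it does not identify that limit with $E_\bk f$, since the operators involve point evaluations and are not bounded on $L_{q,w}$, hence cannot be interchanged with the limit for free. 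A repair inside your framework: first prove the identity in $L_{p,w}(\RRd)$, where the $p=q$ iteration of your Step 1 applies verbatim to mixed products of $E$- and $\Delta$-factors (no polynomial structure is needed there) and shows the mixed terms tend to $0$ as $\bm\to\infty$; then identify the $L_{q,w}$-sum with $E_\bk f$ by uniqueness of a.e.\ limits along subsequences. But note that once you can estimate mixed products containing $E$-factors coordinatewise, you can estimate $E_\bk=\bigotimes_i E_{k_i}$ itself the same way — which is the paper's shorter argument and avoids the series altogether.
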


\begin{proof} 
	The proof of this lemma is similar to the proof of Lemma 3.2  in \cite{DD2024}. For completeness let us process it.
	The case $d=1$ of the lemma follows from Lemma \ref{lemma:I_m}.  For simplicity we prove the lemma for the case $d=2$ and $q < \infty$. The general case can be proven in the same way by induction on $d$.
	Indeed, by  applying successively the case  $d=1$ of the lemma with respect to variables $x_2$ and $x_1$ we obtain 	
	\begin{equation}\nonumber
		\begin{aligned}
			\big\|E_{(k_1,k_2)}f\big\|_{L_{q,w}(\RR^2)}^q
			& =
			\int_{\RR}\int_{\RR}\big|E_{k_2} (E_{k_1}f(x_1,x_2))\big|^q w(\bx)^q\rd x_2 \rd x_1
			\\
			&\leq 
			2^{-q r_{\lambda,p,q} k_2}	\int_{\RR} \sum_{s_2=0}^r\int_{\RR}
			\big|D^{(0,s_2)} (E_{k_1}f(x_1,x_2))\big|^q w(\bx)^q\rd x_2 \rd x_1
			\\
			&= 
			2^{-q r_{\lambda,p,q} k_2}	\int_{\RR} \sum_{s_2=0}^r\int_{\RR}
			\big| (E_{k_1}D^{(0,s_2)}f(x_1,x_2))\big|^q w(\bx)^q\rd x_2 \rd x_1
			\\
			&\le
			2^{-q r_{\lambda,p,q} k_2}	\int_{\RR} \sum_{s_2=0}^r\int_{\RR}
			2^{-q r_{\lambda,p,q} k_1}	\sum_{s_1=0}^r\int_{\RR}
			\big| D^{(s_1,s_2)}f(x_1,x_2)\big|^q w(\bx)^q\rd x_1 \rd x_2
			\\
			&= 
			2^{-q r_{\lambda,p,q} |\bk|_1}\sum_{|\bs|_\infty \le r}\int_{\RR^2}
			\big| D^{(s)}f(\bx)\big|^q w(\bx)^q\rd \bx
			\\
			&= 
			2^{- q r_{\lambda,p,q}|\bk|_1} \|f\|_{W^r_{p,w}(\RR^2)}^q.   
		\end{aligned}
	\end{equation}
	
		\hfill
\end{proof}

We say that $\bk \to \infty$, $\bk \in \NNd_0$, if and only if $k_i \to \infty$ for every $i = 1,...,d$.
\begin{lemma} \label{lemma:Delta_k}
	Let $1< p < \infty$, $1\le q \le \infty$ and $r_{\lambda,p,q} >0$.    Then we have that
	for every $ f \in W^r_{p,w}(\RRd)$,
		\begin{equation}\label{Series1}
f
	= 	\sum_{\bk \in \NNd_0}\Delta_\bk (f)
	\end{equation}
with absolute convergence in the space $L_{q,w}(\RRd)$  of the series, and
	\begin{equation}\label{Delta_k}
			\big\|\Delta_\bk (f)\big\|_{L_{q,w}(\RRd)}  
		\leq C 2^{-r_{\lambda,p,q}|\bk|_1} \|f\|_{W^r_{p,w}(\RRd)}, 
		\ \  \bk \in \NNd_0.
	\end{equation}
\end{lemma}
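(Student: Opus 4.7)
The plan is to prove the norm estimate \eqref{Delta_k} first via the tensor-product structure of $\Delta_\bk$, and then deduce the series representation by combining this with a telescoping identity together with Lemma \ref{lemma:E_k}.

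For the estimate \eqref{Delta_k}, recall from \eqref{tensor-product} that $\Delta_\bk = \bigotimes_{i=1}^d \Delta_{k_i}^I$. Lemma \ref{lemma:Delta_k^I} supplies the univariate bound
$$\|\Delta_k^I(g)\|_{L_{q,w}(\RR)} \le C\,2^{-r_{\lambda,p,q}k}\|g\|_{W^r_{p,w}(\RR)}, \ \ g \in W^r_{p,w}(\RR).$$
I would then iterate this coordinate-by-coordinate exactly as in the proof of Lemma \ref{lemma:E_k}: fix $d-1$ of the variables, invoke Lemma \ref{lemma:g(bx^e} to guarantee that the resulting univariate section belongs to $W^r_{p,w}(\RR)$ with the correct norm control in the remaining variable, and apply the univariate estimate. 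A straightforward induction on $d$ yields \eqref{Delta_k}.

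For the series representation, the key observation is the telescoping identity in one variable: since $\Delta_0^I = I_1 = S_1$ and $\Delta_k^I = S_{2^k} - S_{2^{k-1}}$ for $k\ge 1$, one gets $\sum_{k=0}^{N}\Delta_k^I = S_{2^N}$. Taking tensor products over the $d$ coordinates gives the cube partial sum
$$\sum_{\bk \in \NNd_0,\, \bk \le N\bone} \Delta_\bk(f) \, = \, S_{2^{N\bone}}(f),$$
so the cube remainder equals $f - S_{2^{N\bone}}(f) = E_{N\bone}(f)$. By Lemma \ref{lemma:E_k},
$$\|E_{N\bone}(f)\|_{L_{q,w}(\RRd)} \, \le \, C\, 2^{-r_{\lambda,p,q}Nd}\|f\|_{W^r_{p,w}(\RRd)} \, \to \, 0 \ \ \text{as} \ \ N\to\infty.$$
Absolute convergence is then routine: since $r_{\lambda,p,q}>0$, the geometric series $\sum_{\bk \in \NNd_0} 2^{-r_{\lambda,p,q}|\bk|_1} = (1 - 2^{-r_{\lambda,p,q}})^{-d}$ is finite, which combined with \eqref{Delta_k} yields absolute convergence of $\sum_\bk \Delta_\bk(f)$ in $L_{q,w}(\RRd)$. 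Absolute convergence forces every rearrangement, and in particular the cube one just analyzed, to converge to the same limit, whence $f = \sum_\bk \Delta_\bk(f)$.

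The only nontrivial point is the first step: after fixing several coordinates, one must verify that the remaining univariate section lies in $W^r_{p,w}(\RR)$ with a controlled norm. This is precisely the content of Lemma \ref{lemma:g(bx^e}; once it is in hand, the iteration of the univariate bound of Lemma \ref{lemma:Delta_k^I} is mechanical, and the convergence of the series reduces to telescoping plus absolute summability.
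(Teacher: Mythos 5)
Your estimate \eqref{Delta_k} is essentially fine and close in spirit to the paper's argument: you tensorize the univariate bound of Lemma \ref{lemma:Delta_k^I} coordinate by coordinate, using Lemma \ref{lemma:g(bx^e} to control the sections, whereas the paper first proves Lemma \ref{lemma:E_k} for $E_{\bk}$ by exactly this iteration and then obtains \eqref{Delta_k} from the identity $\Delta_\bk=\sum_{e\subset\{1,\dots,d\}}(-1)^{d-|e|}S_{2^{\bk(e)}}$, i.e.\ as an alternating sum of $E$-type remainders. Either route works, and both leave the same interchange of the $L_{q,w}$-norm in one variable with the $W^r_{p,w}$-norm in the others at the same (informal) level of rigor as the paper.

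The genuine gap is your identification of the cube remainder. You assert $f-S_{2^{N\bone}}(f)=E_{N\bone}(f)$, but this is false for $d\ge 2$: by \eqref{tensor-product}, $E_{N\bone}=\bigotimes_{i=1}^{d}\bigl(I-S_{2^{N}}\bigr)$, while $f-S_{2^{N\bone}}(f)=\bigl(I-\bigotimes_{i=1}^{d}S_{2^{N}}\bigr)(f)$, and a tensor product of differences is not the difference of the identity and the tensor product. For $d=2$,
\begin{equation*}
E_{(N,N)}=I-S_{2^{N}}\otimes I-I\otimes S_{2^{N}}+S_{2^{N}}\otimes S_{2^{N}},
\qquad
I-S_{2^{(N,N)}}=I-S_{2^{N}}\otimes S_{2^{N}},
\end{equation*}
which differ. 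Hence your bound $\|f-S_{2^{N\bone}}(f)\|\ll 2^{-r_{\lambda,p,q}Nd}\|f\|$ is not justified (and its rate is too strong). What is true, and what the paper uses, is the inclusion--exclusion expansion
\begin{equation*}
f-S_{2^{\bk}}(f)=\sum_{e\ne\varnothing}(-1)^{|e|+1}\Bigl(\prod_{i\in e}E_{k_i}\Bigr)(f),
\end{equation*}
where $E_{k_i}$ acts in the $i$-th variable and the identity in the others; each term is bounded by applying the $|e|$-variable version of Lemma \ref{lemma:E_k} in the coordinates of $e$ and the embedding $W^r_{p,w}\hookrightarrow L_{q,w}$ in the remaining coordinates, giving $\|f-S_{2^{N\bone}}(f)\|_{L_{q,w}(\RRd)}\ll 2^{-r_{\lambda,p,q}N}\|f\|_{W^r_{p,w}(\RRd)}\to 0$. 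With this correction your final step (absolute convergence from \eqref{Delta_k} plus convergence of the cube partial sums $\sum_{\bk\le N\bone}\Delta_\bk=S_{2^{N\bone}}$) does yield \eqref{Series1}.
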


\begin{proof}  The operator $\Delta_\bk$ can be represented in the form
	\begin{equation}\nonumber
	\Delta_\bk (f)
	= 	\sum_{e \subset \brab{1,...,d}} (-1)^{|e|} E_{\bk(e)} (f).
\end{equation}	
Therefore, by using Lemma \ref{lemma:E_k} we derive that for every $f \in W^r_{p,w}(\RRd)$ and  $\bk \in \NNd_0$,
	\begin{equation}\nonumber
		\begin{aligned}
			\big\|\Delta_\bk f\big\|_{L_{q,w}(\RRd)} 
			& \le
			\sum_{e \subset \brab{1,...,d}} \big\|E_{\bk(e)} f\big\|_{L_{q,w}(\RRd)}
			\\
			&\leq 
			\sum_{e \subset \brab{1,...,d}}  C 2^{-r_{\lambda,p,q}|\bk(e)|_1} \|f\|_{W^r_{p,w}(\RRd)}
			\le C 2^{-r_{\lambda,p,q}|\bk|_1} \|f\|_{W^r_{p,w}(\RRd)}
		\end{aligned}
	\end{equation}
which proves \eqref{Delta_k} and hence  the absolute convergence of the series in \eqref{Series1} follows.
 Notice that 
\begin{equation}\nonumber
f - S_{2^\bk}f
	= 	\sum_{e \subset \brab{1,...,d}, \ e \not= \varnothing} (-1)^{|e|} E_{\bk^e}f,
\end{equation}	
where recall $\bk^e \in \NNd_0$ is defined by $k^e_i = k_i$, $i \in e$, and 	$k^e_i = 0$, $i \not\in e$.
By using Lemma~\ref{lemma:E_k} we derive  for $\bk \in \NNd_0$ and $f \in W^r_{p,w}(\RRd)$,
\begin{equation}\nonumber
	\begin{aligned}
		\big\|f - S_{2^\bk}f\big\|_{L_{q,w}(\RRd)}  
		& \le
		\sum_{e \subset \brab{1,...,d}, \ e \not= \varnothing} \big\|E_{\bk^e} f\big\|_{L_{q,w}(\RRd)}
		\\
		&\leq 
		C \max_{e \subset \brab{1,...,d}, \ e \not= \varnothing} \ \max_{1 \le i \le d} 2^{-r_{\lambda,p,q}|k^e_i|} \|f\|_{W^r_{p,w}(\RRd)}
	\\
	&\leq 
	C \max_{1 \le i \le d} 2^{-r_{\lambda,p,q}|k_i|} \|f\|_{W^r_{p,w}(\RRd)},
	\end{aligned}
\end{equation}
which is going to $0$ when $\bk \to \infty$. This together with the obvious equality
	\begin{equation}\nonumber
S_{2^\bk}
	= 	\sum_{\bs \le \bk} \Delta_{\bs}
\end{equation}	
proves  \eqref{Delta_k}.
	\hfill
\end{proof}

We now define an  algorithm for sampling on sparse grids   initiated by Smolyak (for detail see \cite[Sections 4.2 and 5.3]{DTU18B}). For $m \in \NN_0$, we define the operator
	\begin{equation}\label{P_m}
	P_m
	:= 	\sum_{|\bk|_1 \le m } \Delta_{\bk}.
\end{equation}	
From \eqref{Delta_bk} we can see that $P_m$ is a linear sampling algorithm on $\RRd$ of the form \eqref{S_k} :
\begin{equation}\label{I_xi}
	P_m (f)
	= 	\sum_{|\bk|_1 \le m } \ \sum_{e \subset \brab{1,...,d}} (-1)^{d - |e|}\ \sum_{\bs=\bone}^{2^{\bk(e)}}  f(\bx_{\bk(e),\bs}) \phi_{\bk(e),\bs}
	= \sum_{(\bk,e,\bs) \in G(m)} f(\bx_{\bk,e,\bs}) \phi_{\bk,e,\bs} , 
\end{equation}
where 
$$
\bx_{\bk,e,\bs}:= \bx_{\bk(e),\bs}, \quad \phi_{\bk,e,\bs}:= (-1)^{d - |e|}\phi_{\bk(e),\bs}
$$ 
and 
\begin{equation}\nonumber
G(m)	:= \brab{(\bk,e,\bs): \ |\bk|_1 \le m, \,   e \subset \brab{1,...,d}, \,  \bone \le \bs \le \bk(e)}
\end{equation}
is a finite set.
The set of  sampled points in this operator  
\begin{equation}\nonumber
H(m):=\brab{\bx_{\bk,e,\bs}}_{(\bk,e,\bs) \in G(m)}
\end{equation}
is a step hyperbolic cross   in the function domain $\RRd$. 
The number of sampled in the operator $P_m$ is 
\begin{equation}\nonumber
		|H(m)|
	= |G(m)|
	= 	\sum_{|\bk|_1 \le m } \ \sum_{e \subset \brab{1,...,d}}2^{|\bk(e)|_1}
\end{equation}
which can be estimated as 
\begin{equation}\label{|G(xi)|}
	|H(m)|
	\asymp	\sum_{|\bk|_1 \le m }2^{|\bk|_1} \ \asymp \ 2^m m^{d - 1}, \ \ m \ge 1.
\end{equation}
The sampling operator  $P_m$ plays a basic  role in the proof of the upper bound \eqref{UpperB-introduction}.

\begin{lemma} \label{lemma:I_xi-error}
	Let $1< p < \infty$, $1\le q \le \infty$ and $r_{\lambda,p,q} >0$.    Then we have 	for every $m > 1$  and every  $f \in W^r_{p,w}(\RRd)$,
\begin{equation}\label{upperbound}
	\big\|f - P_m (f)\big\|_{L_{q,w}(\RRd)}  
	\ll 
	 \|f\|_{W^r_{p,w}(\RRd)}
\begin{cases}
	2^{- r_\lambda} m^{d - 1}  &  \ \ \text{if} \ \ p = q, \\
	2^{- r_{\lambda,p,q}m} m^{(d - 1)/q} &  \ \ \text{if} \ \ p \not= q< \infty, \\
2^{- r_{\lambda,p,q}m} m^{(d - 1)} &  \ \ \text{if} \ \ p \not= q=\infty.
\end{cases}		
\end{equation}
\end{lemma}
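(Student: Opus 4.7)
The plan is to combine the unconditional convergence $f=\sum_{\bk\in\NNd_0}\Delta_\bk(f)$ in $L_{q,w}(\RRd)$ from Lemma \ref{lemma:Delta_k} with the definition \eqref{P_m} of $P_m$ as the partial sum over $|\bk|_1\le m$. This yields the tail representation $f-P_m(f)=\sum_{|\bk|_1>m}\Delta_\bk(f)$, and Lemma \ref{lemma:Delta_k} supplies the layerwise bound $\|\Delta_\bk(f)\|_{L_{q,w}(\RRd)}\ll 2^{-r_{\lambda,p,q}|\bk|_1}\|f\|_{W^r_{p,w}(\RRd)}$.

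For the cases $p=q$ and $p\ne q=\infty$, I would apply the triangle inequality, regroup the tail by level $j=|\bk|_1$, and use the standard count $|\{\bk\in\NNd_0:|\bk|_1=j\}|\asymp j^{d-1}$ to obtain
\[
\|f-P_m(f)\|_{L_{q,w}(\RRd)}\;\ll\;\|f\|_{W^r_{p,w}(\RRd)}\sum_{j>m}j^{d-1}\,2^{-r_{\lambda,p,q}\,j}\;\ll\;m^{d-1}\,2^{-r_{\lambda,p,q}\,m}\,\|f\|_{W^r_{p,w}(\RRd)},
\]
which, recalling $r_{\lambda,p,p}=r_\lambda$, delivers the stated bounds in these two cases.

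For $p\ne q$ with $q<\infty$, the triangle inequality is too wasteful and yields $m^{d-1}$, whereas the target is the sharper $m^{(d-1)/q}$. My plan is to replace, at each fixed level $j$, the triangle inequality by a $q$-power aggregation of the form
\[
\Bigl\|\sum_{|\bk|_1=j}\Delta_\bk(f)\Bigr\|_{L_{q,w}(\RRd)}^{q}\;\ll\;\sum_{|\bk|_1=j}\|\Delta_\bk(f)\|_{L_{q,w}(\RRd)}^{q},
\]
so that the combinatorial count $j^{d-1}$ enters only through its $1/q$-th power. Combined with the layerwise bound this produces the level estimate $j^{(d-1)/q}\,2^{-r_{\lambda,p,q}\,j}\|f\|_{W^r_{p,w}(\RRd)}$, and summing the geometric tail over $j>m$ gives $m^{(d-1)/q}\,2^{-r_{\lambda,p,q}\,m}\|f\|_{W^r_{p,w}(\RRd)}$, as required.

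The main obstacle is establishing the $\ell^q$-aggregation per level. Since each $\Delta_\bk(f)$ lies in the tensor-product polynomial space $\bigotimes_{i=1}^{d}\Pp^*_{m_{k_i}+1}$, the natural route is to transfer $L_{q,w}$-norms of such polynomials to weighted $\ell^q$-norms of their nodal values via a tensor-product Marcinkiewicz equivalence built from Lemma \ref{lemma:MarcinkiewiczInequality}, and then to exploit a near-disjointness or partition-of-unity property of the Lagrange-type blocks $\ell_{m_\bk,\bs}$ across different $\bk$ with $|\bk|_1=j$ on a common refinement of the nodes. The absence of a Fourier Littlewood--Paley tool on $(\RR,w)$ is what makes this discrete additivity on the Freud-weighted grid the delicate technical step; once it is in hand, the remaining estimates are routine geometric-series bookkeeping.
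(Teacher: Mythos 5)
Your reduction to the tail $f-P_m(f)=\sum_{|\bk|_1>m}\Delta_\bk(f)$, the use of the layerwise bound from Lemma \ref{lemma:Delta_k}, and the triangle-inequality treatment of the cases $p=q$ and $p\ne q=\infty$ all agree with the paper. The problem is the case $p\ne q<\infty$: your entire argument hinges on the per-level aggregation $\bigl\|\sum_{|\bk|_1=j}\Delta_\bk(f)\bigr\|_{L_{q,w}(\RRd)}^{q}\ll\sum_{|\bk|_1=j}\|\Delta_\bk(f)\|_{L_{q,w}(\RRd)}^{q}$, which you do not prove, and the route you sketch for it is not viable as described. The blocks $\Delta_\bk(f)$ are algebraic polynomials built from the Lagrange fundamental polynomials $\ell_{m,k}$ at zeros of the Freud orthogonal polynomials; these are globally supported, so there is no near-disjointness or partition-of-unity structure to exploit, and the node sets for different components $m_{k_i}$ are not nested, so there is no ``common refinement'' on which the Marcinkiewicz equivalence of Lemma \ref{lemma:MarcinkiewiczInequality} applies simultaneously to all blocks of a level with controlled constants. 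An inequality of the type you postulate is a strong almost-orthogonality statement in $L_{q,w}$ for $q\ne 2$ and is exactly the nontrivial content that must be supplied; asserting it and calling the rest ``routine bookkeeping'' leaves the essential step open.

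The paper closes this gap differently. It proves in the Appendix (Lemma \ref{lemma:IneqL_qNorm<L_pNorm}, based on the cross-term decay of Lemma \ref{Lemma:IneqInt[varphi_k*varphi_s]}) the global inequality
\begin{equation*}
\|f\|_{L_{q,w}(\RRd)}\ \ll\ \Bigl(\sum_{\bk}\bigl\|2^{\delta_{\lambda,p,q}|\bk|_1}\varphi_\bk\bigr\|_{L_{p,w}(\RRd)}^{q}\Bigr)^{1/q},\qquad \varphi_\bk\in\Pp_{2^\bk},\ p\ne q,
\end{equation*}
whose proof uses a H\"older-product (Jensen) argument together with the weighted Nikol'skii inequalities of Lemma \ref{lemma:B-NInequality}, yielding the almost-orthogonality factor $2^{-\tau|\bk-\bs|_1}$ for integrals of $|\varphi_\bk\varphi_\bs|^{q/2}w$; this adapts a known technique of Temlyakov and of the author from the trigonometric and B-spline settings. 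Applying this to the whole tail (not level by level), with $\|\Delta_\bk(f)\|_{L_{p,w}(\RRd)}\ll 2^{-r_\lambda|\bk|_1}\|f\|_{W^r_{p,w}(\RRd)}$, gives $\|f-P_m(f)\|_{L_{q,w}}^q\ll\sum_{|\bk|_1>m}2^{-qr_{\lambda,p,q}|\bk|_1}\|f\|_{W^r_{p,w}}^q\ll 2^{-qr_{\lambda,p,q}m}m^{d-1}\|f\|_{W^r_{p,w}}^q$, i.e.\ the factor $m^{(d-1)/q}$. Note also that the blocks are measured there in $L_{p,w}$ with the correction $2^{\delta_{\lambda,p,q}|\bk|_1}$, not in $L_{q,w}$ as in your sketch; your postulated inequality is not implied by the paper's lemma and would need an independent proof. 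To repair your argument you should either prove such an aggregation inequality (essentially reproducing the Appendix lemma) or invoke Lemma \ref{lemma:IneqL_qNorm<L_pNorm} directly.
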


\begin{proof}  From  Lemma \ref{lemma:Delta_k} we derive that for $m \ge 1$ and $f \in W^r_{p,w}(\RRd)$,
		\begin{equation}\label{Series1}
	f - P_m (f) 
	= 	\sum_{|\bk|_1 > m } \Delta_\bk f, \ \  \Delta_\bk f \in \Pp_{2^{\bk}},
\end{equation}
with absolute convergence in the space $L_{q,w}(\RRd)$  of the series, and there holds  \eqref{Delta_k}. If $p \not= q$, applying Lemma	\ref{lemma:IneqL_qNorm<L_pNorm} in Appendix and \eqref{Delta_k}, we obtain \eqref{upperbound}:
	\begin{equation}\nonumber
		\begin{aligned}
		\big\|f - P_m (f) \big\|_{L_{q,w}(\RRd)}^q  
			& \ll
				\sum_{|\bk|_1 > m } \big\|2^{\delta_{\lambda,p,q}|\bk|_1}\Delta_\bk f \big\|_{L_{p,w}(\RRd)}^q
				\ll \sum_{|\bk|_1 > m } 2^{-q r_{\lambda,p,q}|\bk|_1}  \|f\|_{W^r_{p,w}(\RRd)}^q
		\\
		&= 
	 \|f\|_{W^r_{p,w}(\RRd)}^q \sum_{|\bk|_1 > m } 2^{-q r_{\lambda,p,q}|\bk|_1} 
	\ll  2^{- q r_{\lambda,p,q}m} m^{d - 1} \|f\|_{W^r_{p,w}(\RRd)}^q.
		\end{aligned}
	\end{equation}
	
	If $p  = q$  or $p\not=q=\infty $, the upper bound \eqref{upperbound} can be derived similarly by using \eqref{Series1}, \eqref{Delta_k} and the inequality
	\begin{equation}\nonumber
			\big\|f - P_m (f) \big\|_{L_{q,w}(\RRd)} 
			\le
			\sum_{|\bk|_1 > m } \big\|\Delta_\bk f \big\|_{L_{q,w}(\RRd)}.			
	\end{equation}
	\hfill
\end{proof}

%\begin{remark}
%	{\rm	
%The technique for proving the upper bound \eqref{upperbound} is analogous to a general technique  for establishing upper bounds of the error of unweighted sampling recovery by Smolyak algorithms of functions with mixed smoothness   on a bounded domain (see, e.g., \cite[Section 5.3]{DTU18B} and \cite[Section 6.9]{TB18} for detail).		
%	}
%\end{remark}

\begin{theorem} \label{thm:rho_n(W)}
	Let  $1< p < \infty$, $1\le q \le \infty$ and $r_{\lambda,p,q} >0$, and  
	denote 
	$$
	\varrho_n:= \varrho_n(\bW^r_{p,w}(\RRd), L_{q,w}(\RRd)) ).
	$$  
For every $n \in \NN$, let $m_n$ be the largest number such that $|G(m_n)| \le n$.	  Then  $P_{m_n}$ defines a sampling operator belonging to $\Ss_n$, and we have that
	\begin{equation}\label{rho_n(W)}
	\varrho_n
	\le 
	\sup_{f\in \bW^r_{p,w}(\RRd)}	
	\big\|f - P_{m_n} f\big\|_{L_{q,w}(\RRd)}   
	\ll 
		\begin{cases}
		n^{-r_{\lambda}} (\log n)^{(r_{\lambda} + 1)(d-1)} &   \ \text{if} \ \ p = q, \\
		n^{-r_{\lambda,p,q}} (\log n)^{(r_{\lambda,p,q} + 1/q)(d-1)} &   \ \text{if} \ \ p \not= q<\infty,
		\\
		n^{-r_{\lambda,p,q}} (\log n)^{(r_{\lambda,p,q} + 1)(d-1)} &   \ \text{if} \ \ p \not= q=\infty;
	\end{cases}	
	\end{equation}
	and
	 \begin{equation}\label{LowerB}
		\varrho_n
		\gg
		\begin{cases}
				n^{-r_{\lambda}} (\log n)^{r_{\lambda}(d-1)} &   \ \text{if} \ \ p = q, \\
		n^{-r_{\lambda,p,q}} (\log n)^{r_{\lambda,p,q}(d-1)} &   \ \text{if} \ \ p < q, \\
		n^{-r_{\lambda,p,q}} &  \ \ \text{if} \ \ p > q.
		\end{cases}	 
	\end{equation}
\end{theorem}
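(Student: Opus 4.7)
\emph{Upper bound.} The upper bound in \eqref{rho_n(W)} follows by direct substitution into Lemma \ref{lemma:I_xi-error}. Since $P_{m_n}$ is a sampling operator of the form \eqref{I_xi} using $|H(m_n)| = |G(m_n)| \le n$ points, $P_{m_n} \in \Ss_n$. By \eqref{|G(xi)|}, $|G(m)| \asymp 2^m m^{d-1}$, whence maximality of $m_n$ forces $m_n \asymp \log n$ and $2^{m_n} \asymp n(\log n)^{-(d-1)}$. Plugging these into each of the three cases of \eqref{upperbound} produces the matching rates; for example, for $p = q$,
\[ 2^{-r_\lambda m_n} m_n^{d-1} \asymp n^{-r_\lambda}(\log n)^{r_\lambda(d-1)}(\log n)^{d-1} = n^{-r_\lambda}(\log n)^{(r_\lambda + 1)(d-1)}, \]
and the other two rates arise by the same substitution.

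\emph{Lower bound for $p > q$.} I would reduce to the one-dimensional Theorem \ref{thm:I_m} via a tensor embedding and $\varrho_n \ge d_n$ from \eqref{rho_n > d_n}. Fix a nonzero $\chi \in C_0^\infty(\RR)$ and consider the map $g \mapsto g \otimes \chi \otimes \cdots \otimes \chi$; since $w$ is a tensor product, both the $W^r_{p,w}$-- and $L_{q,w}$--norms factor across this embedding up to constants depending only on $\chi$, and the image is a subspace of $W^r_{p,w}(\RRd)$ norm-isomorphic to $W^r_{p,w}(\RR)$. Kolmogorov-width monotonicity then transfers the one-dimensional bound $d_n \gg n^{-r_{\lambda,p,q}}$ (proved in the case $p\ge q$ of Theorem \ref{thm:I_m}) directly to $d$ dimensions.

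\emph{Lower bound for $p = q$ and $p < q$.} Use $\varrho_n \ge d_n$ together with a hyperbolic-cross Bernstein--Marcinkiewicz discretization. For $\bk \in \NNd_0$ set $\Pp^*_{\bk+\bone} := \bigotimes_{i=1}^d \Pp^*_{m_{k_i}+1}$, of dimension $\asymp 2^{|\bk|_1}$, with unit ball $\Bb^*_{\bk,p}$ in $L_{p,w}(\RRd)$. Tensorizing Lemma \ref{lemma:B-NInequality}(i) one verifies $\|\varphi\|_{W^r_{p,w}} \ll 2^{r_\lambda|\bk|_1}\|\varphi\|_{L_{p,w}}$ for $\varphi \in \Pp^*_{\bk+\bone}$, so $C 2^{-r_\lambda|\bk|_1}\Bb^*_{\bk,p} \subset \bW^r_{p,w}(\RRd)$. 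Tensorizing Lemma \ref{lemma:MarcinkiewiczInequality} identifies $L_{p,w}$-- and $L_{q,w}$--norms on $\Pp^*_{\bk+\bone}$ with weighted $\ell^N_p$-- and $\ell^N_q$--norms of coefficient vectors ($N \asymp 2^{|\bk|_1}$). Choosing $m$ with $2^m m^{d-1}$ just above $n$ and forming the hyperbolic-cross sum $H_m := \sum_{|\bk|_1 = m} \Pp^*_{\bk+\bone}$ of dimension $\gg n$, the Bernstein radius $2^{-r_\lambda m} \asymp n^{-r_\lambda}(\log n)^{r_\lambda(d-1)}$ delivers the $p=q$ bound, while for $p<q$ an additional Nikol'skii-type correction of order $2^{-\delta_{\lambda,p,q}m}$ shifts the exponent of $n$ from $r_\lambda$ to $r_{\lambda,p,q}$.

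\emph{Main obstacle.} The delicate point is producing the logarithmic exponent $(\log n)^{r_{\lambda,p,q}(d-1)}$ in the $p \le q$ lower bounds. A single-ball discretization (fixed $\bk$) only gives $d_n \gg n^{-r_{\lambda,p,q}}$ without any log factor; the whole hyperbolic-cross family $\{\Pp^*_{\bk+\bone}\}_{|\bk|_1 = m}$ must be used simultaneously, and the tensorized Marcinkiewicz and Bernstein normalization constants must be tracked carefully against the relevant $\ell^N_p \to \ell^N_q$ Kolmogorov widths. This parallels the classical lower-bound arguments for hyperbolic-cross Kolmogorov widths on the torus, here adapted to the weighted polynomial discretization developed in Section \ref{Univariate sampling recovery}.
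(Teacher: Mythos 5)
Your treatment of the upper bound is exactly the paper's: plug $2^{m_n}m_n^{d-1}\asymp n$, $m_n\asymp\log n$ into Lemma \ref{lemma:I_xi-error}, and this part is fine; likewise your reduction of the $p>q$ lower bound to the one-dimensional result of Theorem \ref{thm:I_m} is in the spirit of what the paper does (though the transfer of the width bound to $d$ dimensions via your tensor embedding still needs a bounded projection of $L_{q,w}(\RRd)$ onto the line of functions $g\otimes\chi^{\otimes(d-1)}$, since Kolmogorov widths do not automatically survive an isometric embedding of the target space).

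The genuine gap is in the lower bounds for $p\le q$. You propose to get \eqref{LowerB} from $\varrho_n\ge d_n$ (inequality \eqref{rho_n > d_n}) plus a hyperbolic-cross Bernstein--Marcinkiewicz discretization of the Kolmogorov width. This cannot work in the full range of the theorem: for $p<q$ with $q>2$ (in particular $q=\infty$) the Kolmogorov widths of such classes are of strictly smaller order than the claimed sampling lower bound $n^{-r_{\lambda,p,q}}(\log n)^{r_{\lambda,p,q}(d-1)}$ -- this is the weighted analogue of the classical fact that $d_n\big(\bW^r_p(\TTd),L_q(\TTd)\big)$ behaves like $\big(n^{-1}\log^{d-1}n\big)^{r-1/p+1/2}$ for $p\le 2\le q$, which decays faster than $n^{-(r-1/p+1/q)}$ -- so no argument that only bounds $d_n$ from below can yield \eqref{LowerB} there. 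Even in the regimes where the width route is not a priori hopeless ($p=q$, or $p<q\le 2$), the step you yourself flag as the ``main obstacle'' is left unproven, and it is genuinely delicate here: the blocks $\Pp^*_{\bk+\bone}$ for different $\bk$ with $|\bk|_1=m$ are nested and heavily overlapping (unlike orthogonal trigonometric dyadic blocks), so Lemma \ref{lemma:MarcinkiewiczInequality} does not simply ``tensorize over the family $|\bk|_1=m$'' to a Marcinkiewicz system for the sum space. The paper avoids all of this: for $p\le q$ it proves \eqref{LowerB} directly from the sampling-specific inequality \eqref{rho_n>}, by constructing, for arbitrary $n$ sample points, a fooling bump $\bar h=C^{-1}M_n^{-(1-1/\lambda)(r-1/p)}\,g\,w^{-1}$ supported in a cell $K_\bs$ with $\bs$ running over the set $\Gamma_d(M_n)$ of cardinality $\asymp M_n(\log M_n)^{d-1}$; the pigeonhole over $\Gamma_d(M_n)$ is precisely what produces the factor $(\log n)^{r_{\lambda,p,q}(d-1)}$, and the argument exploits that sampling algorithms only see function values -- information that is lost once you pass to $d_n$. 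To repair your proof you would need to replace the Kolmogorov-width step for $p\le q$ by such a fooling-function construction (the weighted scaling of the bump, i.e.\ controlling $\|(g w^{-1})^{(\bk)}w\|_{L_p}$ on cells located at distance $\asymp M_n^{1/\lambda d}$ from the origin, is the substantive computation).
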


\begin{proof}  
 Let us prove the case $p \not= q < \infty$ of the upper bounds \eqref{rho_n(W)}. The cases $p = q$ and $p\not=q=\infty $ can be proven in a similar manner.
 From \eqref{|G(xi)|} it follows 
$$
\ 2^{m_n} m_n^{d - 1} \asymp |G(m_n)|  \asymp n.
$$ 
Hence we deduce the asymptotic equivalences
$$
\ 2^{-m_n}  \asymp n^{-1} (\log n)^{d-1}, \ \  m_n \asymp \log n,
$$
which together with Lemma \ref{lemma:I_xi-error}  yield that
	\begin{equation*}\label{I_xi-error}
		\begin{aligned}
		\varrho_n
		&\le 
	\sup_{f\in \bW^r_{p,w}(\RRd)}	
	\big\|f - P_{m_n} f\big\|_{L_{q,w}(\RRd)}   
		\\
		&
		\leq 
		C 2^{- r_\lambda m_n} m_n^{(d-1)/q}
		\asymp  	n^{-r_{\lambda,p,q}} (\log n)^{(r_{\lambda,p,q} + 1/q)(d-1)}.
		\end{aligned}
	\end{equation*}
The upper bound in \eqref{rho_n(W)} for the case $p \not= q<\infty$ is proven.

We now prove the lower bounds  in \eqref{LowerB}.  The lower bound for the case $p > q$ can be derived from the lower bound in Theorem \ref{thm:I_m} for $d=1$.
We prove it for the cases $p=q$ and  $p < q$ merged as the case $p \le q$,  by using the inequality \eqref{rho_n>}.
For $M \ge 1$, we define the set 
	\begin{equation*}\label{Gamma-set}
\Gamma_d(M):= \brab{\bs \in \NNd: \, \prod_{i=1}^d s_i \le 2M, \ s_i \ge M^{1/d}, \ i=1,...,d}.
\end{equation*}
Then we have by  \cite[(3.15)]{DD2023}
	\begin{equation}\label{|Gamma|}
	|\Gamma_d(M)| \asymp M (\log M)^{d-1}, \ \ M>1.
\end{equation}

For a given $n \in \NN$, let $\brab{\bxi_1,...,\bxi_n} \subset \RRd$ be arbitrary  $n$ points. Denote by $M_n$  the smallest number such that $|\Gamma_d(M_n)| \ge n + 1$. We define the $d$-parallelepiped $K_\bs$ for $\bs \in \NNd_0$ of size 
$$
\delta:= M_n^{\frac{1/\lambda - 1}{d}}
$$
 by
	\begin{equation*}\label{K_bs}
K_\bs:= \prod_{i=1}^d K_{s_i}, \ \ K_{s_i}:= (\delta s_i, \delta s_{i-1}).
\end{equation*}
 Since  $|\Gamma_d(M_n)| > n$, there exists a multi-index 
$\bs \in \Gamma_d(M_n)$ such that $K_\bs$ does not contain any point from $\brab{\bxi_1,...,\bxi_n}$.

As in the proof of Theorem \ref{thm:I_m}, we take a nonnegative function $\varphi \in C^\infty_0([0,1])$, $\varphi \not= 0$, and put
	\begin{equation}\label{b_s-d}
	b_s :=  \|\varphi^{(s)}(y)\|_{L_p([0,1])}, \ s = 0,1,...,r.
\end{equation}				
For $i= 1,...,d$, we define the univariate functions $g_i$ in variable $x_i$ by
\begin{equation}\label{g_i}
	g_i(x_i):= 
	\begin{cases}
		\varphi(\delta^{-1}(x_i - \delta s_{i-1})), & \ \ x_i  \in K_{s_i}, \\
		0, & \ \ \text{otherwise}.	
	\end{cases}	
\end{equation}
Then the multivariate functions $g$ and $h$ on $\RRd$ are defined by
	\begin{equation*}\label{g(bx)}
	g(\bx):= \prod_{i=1}^d g_i(x_i),
\end{equation*}
and  
\begin{equation}\label{h(bx)}
	h(\bx):= (gw^{-1})(\bx)= \prod_{i=1}^d g_i(x_i)w^{-1}(x_i)=:
	\prod_{i=1}^d h_i(x_i).		
\end{equation}
Let us estimate the norm $\norm{h}{W^r_{p,w}(\RRd)}$. For every $\bk \in \NNd_0$ with 
$0 \le |\bk|_\infty \le r$,  we prove the inequality
\begin{equation}\label{int-D^br}
	\int_{\RRd}\big|(D^{\bk} h) w\big|^p(\bx) \rd \bx 
	\le  C   M_n^{(1 - 1/\lambda)(r - 1/p)}.
\end{equation}
We have
\begin{equation}\label{D^bk h}
	D^\bk h  = \prod_{i=1}^d h_i^{(k_i)}. 		
\end{equation}
Similarly to \eqref{h^(s)}--\eqref{h^(s)w(x)} we derive that for every $i = 1,...,d$,
\begin{equation*}\label{h^(s-i)w(x_i)}
	h_i^{(k_i)}(x_i) w(x_i)= \sum_{\nu_i=0}^{k_i} \binom{k_i}{\nu_i} g_i^{(k_i- \nu_i)}(x_i) (\sign (x_i))^{\nu_i}\sum_{\eta_i=1}^{\nu_i} c_{\nu_i,\eta_i}(\lambda,a) |x_i|^{\lambda_{\nu_i,\eta_i}}, 		
\end{equation*}
where
\begin{equation*}\label{lambda_{nu,nu}}
	\lambda_{\nu_i,\nu_i} = \nu_i(\lambda - 1) > \lambda_{\nu_i,\nu_i-1} > \cdots > \lambda_{\nu_i,1} = \lambda - \nu_i,
\end{equation*}
and  $c_{\nu_i,\eta_i}(\lambda,a)$ are polynomials in the variables $\lambda$ and $a$ of degree at most $\nu_i$ with respect to each variable.
This together with \eqref{b_s-d}, \eqref{g_i} and the inequalities $s_i \ge M_n^{\frac{1}{d}}$ and $\lambda_{\nu_i,\nu_i} = \nu_i(\lambda - 1) \ge  0$  yields that 
\begin{equation}\label{int-h^(s)w-d>1}
	\begin{aligned}
			\int_{\RR}\big|h_i^{(k_i)}(x_i) w(x_i)\big|^p\rd x_i
	&\le  C \max_{0\le \nu_i \le k_i} \ \max_{1 \le \eta_i \le \nu_i}  
	\int_{K_{s_i}}|x_i|^{p\lambda_{\nu_i,\eta_i}}\big|g^{(k_i-\nu_i)}(x_i)\big|^p \rd x_i 
	\\
	&\le  C \max_{0\le \nu_i \le k_i} (\delta s_i)^{p\lambda_{\nu_i,\nu_i}}
	\int_{K_{s_i}}\big|g^{(k_i-\nu_i)}(x_i)\big|^p \rd x_i 
	\\
	&\le C \max_{0\le \nu_i \le k_i} (\delta s_i)^{p\nu_i(\lambda - 1)}
	\delta^{- p(k_i - \nu_i -1/p)} b_{k_i - \nu_i}^p
	\\
	&= C \delta^{- p(k_i -1/p)} \max_{0\le \nu_i \le k_i} \brac{\delta^\lambda  s_i^{\lambda -1}}^{p\nu_i}. 
\end{aligned}
\end{equation}
Since $s_i \ge M_n^{\frac{1}{d}}$ and $\delta:= M_n^{\frac{1/\lambda - 1}{d}}$, we have that
$\delta^\lambda  s_i^{\lambda -1} \ge 1$, and consequently, 
$$
\max_{0\le \nu_i \le k_i} \brac{\delta^\lambda  s_i^{\lambda -1}}^{p\nu_i}
 = 
 \brac{\delta^\lambda  s_i^{\lambda -1}}^{pk_i}.
$$
This equality,  the estimates \eqref{int-h^(s)w-d>1} and the inequalities  $0 \le k_i \le r$ and 
$\delta s_i \ge 1$ yield that
\begin{equation}\nonumber
	\begin{aligned}
		\int_{\RR}\big|h_i^{(k_i)}(x_i) w(x_i)\big|^p\rd x_i
		&\le  C \delta^{- p(k_i -1/p)} \brac{\delta^\lambda  s_i^{\lambda -1}}^{p k_i} 
		= C \delta \brac{\delta s_i}^{p k_i(\lambda -1)}
		\\ 
		& \le  C \delta \brac{\delta s_i}^{p r(\lambda -1)} 
		=  C \delta ^{p r(\lambda - 1) + 1} s_i^{p r(\lambda - 1)}. 
	\end{aligned}
\end{equation}
Hence, by \eqref{D^bk h} we deduce 
\begin{equation}\nonumber
	\begin{aligned}
		\int_{\RRd}|(D^\bk h) w|^p(\bx) \rd \bx 
		&= \prod_{i=1}^d  \int_{\RR}\big|h^{(k_i)}(x_i) w(x_i)\big|^p\rd x_i
		\\
		&  \le  C \prod_{i=1}^d  \delta ^{p r(\lambda - 1) + 1}  s_i^{p r(\lambda - 1)} 
		 \le  C \delta ^{d\brac{p(r(\lambda - 1) + 1}} \brac{\prod_{i=1}^d  s_i}^{p r(\lambda - 1)}.
\end{aligned}
\end{equation}
Since  $\prod_{i=1}^d  s_i \le 2M_n$, $\delta:= M_n^{\frac{1/\lambda - 1}{d}}$ and $\lambda  > 1$, we can continue the estimation as
\begin{equation*}\label{int-D^br2}
	\begin{aligned}
		\int_{\RRd}|(D^\bk h) w|^p(\bx) \rd \bx 
		\le  C  M_n^{p(r(\lambda - 1) + 1/p)(1/\lambda - 1)}   M_n^{p r(\lambda - 1)} 
			= C   M_n^{p(1 - 1/\lambda)(r - 1/p)},
	\end{aligned}
\end{equation*}
which completes the proof of the inequality \eqref{int-D^br}.
This  inequality means that $h \in W^r_{p,w}(\RRd)$ and  
$$
\norm{h}{W^r_{p,w}(\RRd)} \le  C   M_n^{(1 - 1/\lambda)(r - 1/p)}.
$$
 If  we define 
\begin{equation*}\label{h-bar}
	\bar{h}:=  C^{-1} M_n^{-(1-1/\lambda)(r-1/p)}h,
\end{equation*}
then $\bar{h}$ is nonnegative, $\bar{h} \in \bW^r_{p,w}(\RR)$, $\supp \bar{h} \subset K_\bs$ and by \eqref{b_s-d}--\eqref{h(bx)}, we have that for $q < \infty$,
\begin{equation}\nonumber
	\begin{aligned}	
		\int_{\RRd}|\bar{h}w|^q(\bx) \rd \bx 
		&=  C^{-q} M_n^{-q(1-1/\lambda)(r-1/p)}\int_{\RRd}|h w|^q(\bx) \rd \bx \\
		&
		 = C^{-q} M_n^{-q(1-1/\lambda)(r-1/p)}\prod_{i=1}^d \int_{K_{s_i}} |g_i(x_i)|^q \rd x_i \\
		&=  C^{-q} M_n^{-q(1-1/\lambda)(r-1/p)} \prod_{i=1}^d \delta \int_0^1|\varphi(x)|^q \rd x \\
		&=  {C'}^q M_n^{-q(1-1/\lambda)(r-1/p)}M_n^{1/\lambda - 1}
		=  {C'}^q M_n^{-q r_{\lambda,p,q}}.
	\end{aligned}
\end{equation}
From the definition of $M_n$ and \eqref{|Gamma|} it follows that
$$M_n(\log M_n)^{d-1} \asymp |\Gamma(M_n)| \asymp n,$$ 
 which implies that $M_n^{-1} \asymp n^{-1} (\log n)^{d-1}$. This allows to receive the estimate
\begin{equation}\label{int-h-bar2}
\|\bar{h}\|_{L_{q,w}(\RRd)}  
		=  C' M_n^{-r_{\lambda,p,q}}
		\gg
		n^{-r_{\lambda,p,q}} (\log n)^{r_{\lambda,p,q}(d-1)}.
\end{equation}
 Since the interval $K_\bs$ does not contain any point from the set $\brab{\bxi_1,...,\bxi_n}$ which has been arbitrarily chosen, we have 
 $$\bar{h}(\bxi_k) = 0, \ \ k = 1,...,n.
 $$
  Hence, by  the inequality \eqref{rho_n>} and \eqref{int-h-bar2} we have that
\begin{equation*}\label{int-h-bar3}
	\varrho_n
	\ge
\|\bar{h}\|_{L_{q,w}(\RRd)}  
	\gg 
		n^{-r_{\lambda,p,q}} (\log n)^{r_{\lambda,p,q}(d-1)}.
\end{equation*}
The lower bound in \eqref{rho_n(W)} for the case $p \le q< \infty$  is proven. It can be proven similarly for the case $p < q = \infty$ with a certain modification.
	\hfill
\end{proof}
%%%%%%%%%%%%%%%%%%%%%%%%%%%

We have proven  upper and lower bounds of the sampling widths 	$\varrho_n(\bW^r_{p,w}(\RRd), L_{q,w}(\RRd))$ for $1< p < \infty$ and $1\le q \le \infty$. In a similar way with a certain modification, we can prove upper and lower bounds of these sampling widths for $p=\infty$ and $1\le q \le \infty$ in the case when the generating weights $w$ and $v$ are of the form \eqref{w(x),p=infty} and \eqref{v(x),p=infty}.
More precisely, by using the same technique and by replacing \eqref{|f - I_m f|<} in  Lemma \ref{lemma:I_m} with \eqref{|f - I_m f|<-p=infty} we derive the following result.

\begin{lemma} \label{lemma:Delta_k-p=infty,d>2 }
Let the generating weights $w$ and $v$ be given by \eqref{w(x),p=infty} and \eqref{v(x),p=infty}. 	Let  $1\le q \le \infty$ and $r_{\lambda,\infty,q} >0$.  Then we have that
	for every $ f \in W^r_{\infty,w}(\RRd)$,
	\begin{equation}\label{Series2-p=infty,d>2}
		f
		= 	\sum_{\bk \in \NNd_0}\Delta_\bk (f)
	\end{equation}
	with absolute convergence in the space $L_{q,w}(\RR)$  of the series. 
	Moreover,
	\begin{equation}\label{Delta_kf-p=infty,d>2}
		\big\|\Delta_\bk f\big\|_{L_{q,w}(\RRd)}  
		\leq C 2^{-  r_{\lambda,\infty,q} |\bk|_1} |\bk|_1^d\,\|f\|_{W^r_{\infty,w}(\RRd)}, 
		\ \  \bk \in \NNd_0.
	\end{equation}
\end{lemma}

In the next step, similarly to Lemma \ref{lemma:I_xi-error}, this lemma implies

\begin{lemma} \label{lemma:I_xi-error-p=infty}
	Under the assumptions of Lemma 	\ref{lemma:Delta_k-p=infty,d>2 }
we have that	
	\begin{equation}\label{upperbound-p=infty}
		\big\|f - P_m (f)\big\|_{L_{q,w}(\RRd)}  
		\ll 
		\|f\|_{W^r_{\infty,w}(\RRd)}
			2^{- r_{\lambda,\infty,q}m} m^{2d - 1},
		\ \ m > 1, \  f \in W^r_{\infty,w}(\RRd). 
	\end{equation}
\end{lemma}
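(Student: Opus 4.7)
The plan is to mirror the proof of Lemma~\ref{lemma:I_xi-error}, with Lemma~\ref{lemma:Delta_k^I-p=infty,d>2 } playing the role of Lemma~\ref{lemma:Delta_k}. Since the per-block bound now carries an extra polynomial factor $|\bk|_1^d$, while the exponential decay rate $2^{-r_{\lambda,\infty,q}|\bk|_1}$ is unchanged, I expect only an additional polynomial-in-$m$ loss of order $m^d$ beyond the exponent $(d-1)$ that already appears in the $p\ne q=\infty$ case of \eqref{upperbound}.

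First I would invoke Lemma~\ref{lemma:Delta_k^I-p=infty,d>2 } to obtain the expansion $f=\sum_{\bk\in\NNd_0}\Delta_\bk(f)$ with absolute convergence in $L_{q,w}(\RRd)$, so that by the definition \eqref{P_m} of $P_m$,
\begin{equation}\nonumber
f - P_m(f) \ = \ \sum_{|\bk|_1 > m}\Delta_\bk(f).
\end{equation}
Unlike the $1<p<\infty$ case of Lemma~\ref{lemma:I_xi-error}, where a refined $\ell_q$-type inequality (Lemma~\ref{lemma:IneqL_qNorm<L_pNorm}) produced the sharper exponent $(d-1)/q$ for $q<\infty$, here I would simply apply the plain triangle inequality uniformly in $1\le q\le\infty$, since any gain one might hope for from $\ell_q$-summation is already absorbed into the $L_{q,w}$-bound \eqref{Delta_k^If-p=infty,d>2}.

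What remains is a routine geometric-polynomial tail estimate. Grouping indices by the level $j = |\bk|_1$ and using $|\{\bk\in\NNd_0 : |\bk|_1 = j\}| \asymp j^{d-1}$ together with \eqref{Delta_k^If-p=infty,d>2} gives
\begin{equation}\nonumber
\big\|f - P_m(f)\big\|_{L_{q,w}(\RRd)} \ \ll \ \|f\|_{W^r_{\infty,w}(\RRd)} \sum_{j>m} 2^{-r_{\lambda,\infty,q} j}\, j^{2d-1} \ \ll \ 2^{-r_{\lambda,\infty,q} m}\, m^{2d-1}\,\|f\|_{W^r_{\infty,w}(\RRd)},
\end{equation}
which matches the exponent $2d-1$ in \eqref{upperbound-p=infty}; the last $\ll$ uses $r_{\lambda,\infty,q}>0$. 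I do not foresee any real obstacle: the one point worth a line of comment is that the termwise triangle inequality is legitimate precisely because absolute convergence in $L_{q,w}(\RRd)$ has already been established in Lemma~\ref{lemma:Delta_k^I-p=infty,d>2 }; no additional ingredients are required.
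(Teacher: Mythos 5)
Your proposal is correct and follows exactly the route the paper intends: the paper derives Lemma~\ref{lemma:I_xi-error-p=infty} from Lemma~\ref{lemma:Delta_k^I-p=infty,d>2 } ``similarly to Lemma~\ref{lemma:I_xi-error}'', i.e.\ via the tail decomposition $f-P_m(f)=\sum_{|\bk|_1>m}\Delta_\bk(f)$, the triangle inequality (as in the $p\neq q=\infty$ case of \eqref{upperbound}, since Lemma~\ref{lemma:IneqL_qNorm<L_pNorm} is unavailable for $p=\infty$), and the level-count $|\{\bk:|\bk|_1=j\}|\asymp j^{d-1}$ combined with the extra factor $|\bk|_1^{d}$ to produce the exponent $2d-1$. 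Your tail estimate and the justification of termwise summation by the established absolute convergence are both sound.
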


Analogously to Theorem \ref{thm:rho_n(W)}, from Lemmata \ref{lemma:Delta_k-p=infty,d>2 } and \ref{lemma:I_xi-error-p=infty} we deduce the following results.
\begin{theorem} \label{theorem:Int_n(W)-p=infty}
	Under the assumptions of Lemma 	\ref{lemma:Delta_k-p=infty,d>2 }, 	denote 
	$$
	\varrho_n:= \varrho_n(\bW^r_{\infty,w}(\RRd), L_{q,w}(\RRd)).
	$$  
	For every $n \in \NN$, let $m_n$ be the largest number such that $|G(m_n)| \le n$.	  Then  $P_{m_n}$ defines a sampling operator belonging to $\Ss_n$, and we have that
	\begin{equation}\label{rho_n(W)-p=infty}
		n^{-r_{\lambda,\infty,q}}
		\ll
		\varrho_n
		\le
	\sup_{f\in \bW^r_{\infty,w}(\RRd)}	
\big\|f - P_{m_n} (f)\big\|_{L_{q,w}(\RRd)}   		
		\ll 
			n^{-r_{\lambda,\infty,q}} (\log n)^{r_{\lambda,\infty,q}(d-1) + 2d -1}.
	\end{equation}	
\end{theorem}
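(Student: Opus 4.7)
The plan is to mirror the structure of Theorem \ref{thm:rho_n(W)}, replacing the role of Lemma \ref{lemma:I_xi-error} by its $p=\infty$ analogue Lemma \ref{lemma:I_xi-error-p=infty}, and deriving the lower bound from the one-dimensional Theorem \ref{thm:I_m,p=infty}.

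For the upper bound, I first observe that by \eqref{|G(xi)|} the choice of $m_n$ as the largest integer with $|G(m_n)| \le n$ yields $2^{m_n} m_n^{d-1} \asymp n$, whence
$$
2^{-m_n} \asymp n^{-1}(\log n)^{d-1}, \qquad m_n \asymp \log n.
$$
Substituting these asymptotics into the estimate of Lemma \ref{lemma:I_xi-error-p=infty} gives
$$
\sup_{f\in \bW^r_{\infty,w}(\RRd)}\big\|f - P_{m_n}(f)\big\|_{L_{q,w}(\RRd)}
\ll 2^{-r_{\lambda,\infty,q}m_n} m_n^{2d-1}
\asymp n^{-r_{\lambda,\infty,q}}(\log n)^{r_{\lambda,\infty,q}(d-1) + 2d - 1},
$$
which establishes the upper bound and in particular $P_{m_n} \in \Ss_n$.

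For the lower bound I reduce to the univariate case already handled in Theorem \ref{thm:I_m,p=infty}. I would pick a fixed nonzero function $\varphi \in C_0^\infty(\RR)$ (of the kind constructed in the proof of Theorem \ref{thm:rho_n(W)} but in one variable) and, given $n$ arbitrary sample points $\bxi_1,\dots,\bxi_n\in\RRd$, reduce to a univariate fooling argument in the first coordinate. Concretely, I would tensorize a scaled bump $\bar h$ on $\RR$, chosen exactly as in the univariate lower bound proof of Theorem \ref{thm:I_m} (with $\delta = n^{1/\lambda-1}$ and the target interval $(t_{i-1},t_i)$ avoiding the projections of $\bxi_1,\dots,\bxi_n$ onto the $x_1$-axis), with a single fixed nonzero function $\psi(x_2,\dots,x_d)\in C_0^\infty(\RR^{d-1})$ whose $W^r_{\infty,w}$-norm is a universal constant. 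The product function vanishes on all $n$ sample points, has $W^r_{\infty,w}(\RRd)$-norm $\le$ const, and its $L_{q,w}(\RRd)$-norm is bounded below by a multiple of the univariate lower bound, giving
$$
\varrho_n \;\gg\; n^{-r_{\lambda,\infty,q}}.
$$

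The main obstacle I anticipate is purely bookkeeping: verifying that the tensor-product fooling function still satisfies $\|\bar h\otimes \psi\|_{W^r_{\infty,w}(\RRd)} \le C$ uniformly in $n$, since the derivatives of the tensor-product weight $w(\bx)=\prod w(x_i)$ interact with derivatives of the univariate Freud weight exactly through formulas \eqref{h^(s)}--\eqref{h^(s)w(x)} used in the proof of Theorem \ref{thm:I_m}, but now in the $L_\infty$ setting. This is a straightforward adaptation since all the polynomial factors $|x_i|^{\lambda_{s,j}}$ are controlled on the support $K_\bs$ exactly as in the proof of Theorem \ref{thm:rho_n(W)}. The resulting gap between the upper and lower bound is a logarithmic factor of order $(\log n)^{r_{\lambda,\infty,q}(d-1)+2d-1}$, of the same flavour as the gap appearing in Theorem \ref{thm:rho_n(W)}.
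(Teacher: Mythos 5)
Your upper bound is exactly the paper's argument: plug $2^{m_n}m_n^{d-1}\asymp n$ into Lemma \ref{lemma:I_xi-error-p=infty}, which is all the paper does for that half.

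The lower bound, however, has a genuine gap. You propose to tensorize the \emph{fooling function} $\bar h$ from the proof of Theorem \ref{thm:I_m} with a fixed bump $\psi$. But that fooling-function construction is the one the paper uses only for the case $p<q$; for $p\ge q$ it is quantitatively too weak, and here $p=\infty\ge q$ always. Concretely, redoing the univariate computation with $p=\infty$ (sup-norms of $\varphi^{(s)}$ in place of $L_p$-integrals) forces the normalization $\bar h = C^{-1}n^{-r_\lambda}h$, and then
$$
\|\bar h\|_{L_{q,w}(\RR)} \asymp n^{-r_\lambda}\,\delta^{1/q} \asymp n^{-(1-1/\lambda)(r+1/q)},
$$
whereas the claimed lower bound is $n^{-r_{\lambda,\infty,q}}$ with $r_{\lambda,\infty,q}=(1-1/\lambda)r-\tfrac{1}{\lambda q}$. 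The two exponents differ by exactly $1/q$, so for every $q<\infty$ your construction falls short of the stated bound by a factor $n^{-1/q}$ (only for $q=\infty$ do the exponents coincide). This is not a bookkeeping issue with the weight derivatives, as you anticipate, but a structural one: no single localized bump can witness the bound $n^{-r_{\lambda,\infty,q}}$ when $p>q$.

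The argument the paper intends (via Theorem \ref{thm:I_m,p=infty}, whose lower bound is proved ``in the same way as ... Theorem \ref{thm:I_m} in the case $p\ge q$'') goes through Kolmogorov widths: use $\varrho_n\ge d_n$, the Bernstein inequality to embed $C m^{-r_\lambda}\Bb^*_{N,\infty}$ into $\bW^r_{\infty,w}(\RR)$, the Marcinkiewicz-type discretization of Lemma \ref{lemma:MarcinkiewiczInequality} to pass to finite-dimensional balls, and the exact formula $d_n(B^N_p,\ell^N_q)=(N-n)^{1/q-1/p}$ with $p=\infty$ and $N\asymp 2n$, which produces the extra factor $n^{1/q}\cdot n^{-(1-1/\lambda)/q}=n^{\delta_{\lambda,\infty,q}}$ and hence the correct exponent $r_{\lambda,\infty,q}$. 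Once the univariate bound is in hand, your tensorization idea \emph{is} the right way to lift it to $d\ge 2$ (restrict to functions $g(x_1)\psi(x_2,\dots,x_d)$ with $\psi$ fixed), but the univariate input must come from the width argument, not from a fooling function.
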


\begin{figure}
	\centering{
		\begin{tabular}{cc}
			\includegraphics[height=7.0cm]{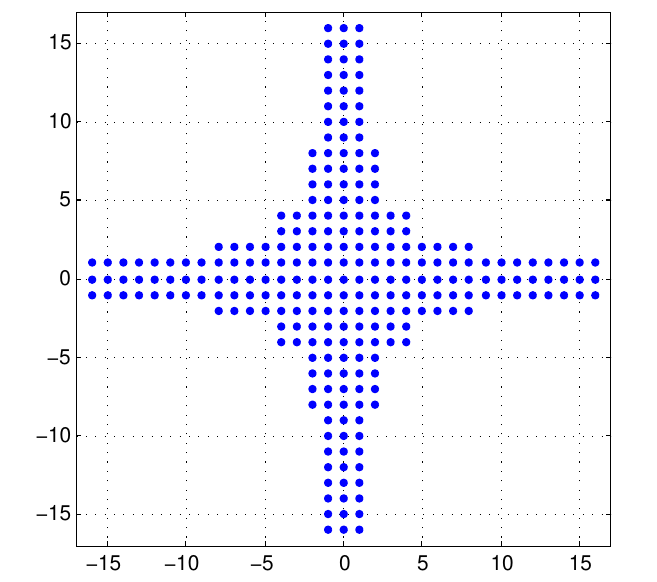}	 &  
			\includegraphics[height=7.0cm]{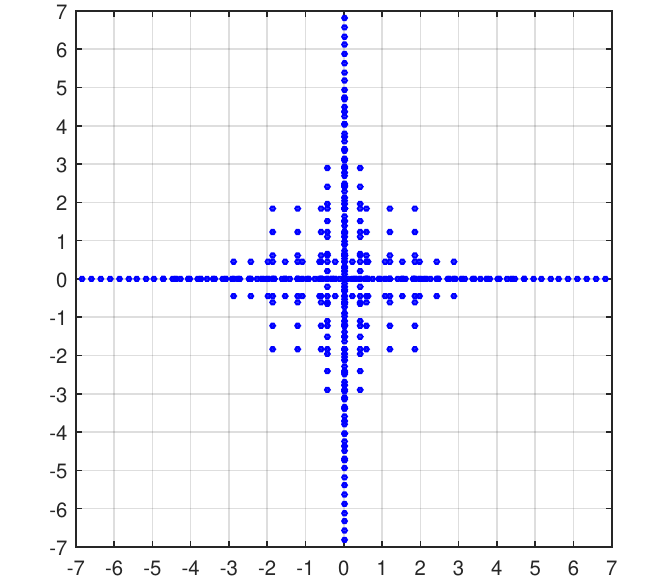}
			\\
			{A classical HC in frequency domain} & {A Hermite HC in function domain}
		\end{tabular}
	}
	\caption{Different hyperbolic crosses (HC)($d=2$) borrowed from \cite{DD2023}}
	\label{Fig-1}
\end{figure}
\begin{figure}
	\centering{
		\begin{tabular}{cc}
			\includegraphics[height=7.0cm]{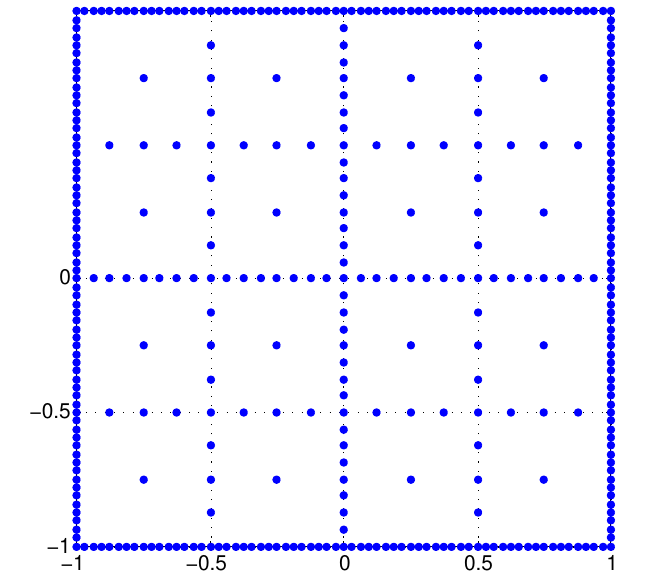}	 &  
		%	\includegraphics[height=5.0cm]{Smolyak-AssembGrid.pdf} 
			%\\
			%{A classical Smolyak grid} %& { An assembling of Smolyak grids}
		\end{tabular}
	}
	\caption{A classical Smolyak grid ($d=2$) borrowed from \cite{DD2023}}
	\label{Fig-2}
\end{figure}

%\begin{figure}
%	\centering{	
%		\begin{tabular}{cc}
%			\includegraphics[height=7.5cm]{Hermite-HCross.pdf}
%			%		\\
%			%	{Point set of our construction (560 points)}
%		\end{tabular}
%	}
%	\caption{ A Hermite step-hyperbolic-cross grid on the function domain}
%	\label{Fig-1}
%\end{figure}
		The grids of sample points $H(m)$ in the  algorithm $P_m$, which are formed from the non-equidistant zeros of the orthonornal polynomials $p_m(w)$, as noticed above,
		is a  step hyperbolic cross   on the function domain $\RRd$.  The grids  $H(m)$  completely differ from classical Smolyak grids  on the function domain $[-1,1]^d$ which are used in sampling recovery  for functions having a mixed smoothness (see, e.g., \cite[Section~5.3]{DTU18B} for detail). In Figure~\ref{Fig-1},  a step hyperbolic cross grid in the two-dimensional function domain  $\RR^2$ in the right picture is designed for the weight $w(\bx)= \exp(- x_1^2 - x_2^2)$, and  a classical step hyperbolic cross in the two-dimensional frequency domain is in the left picture. The classical Smolyak  grid on the domain $[-1,1]^2$ is in the left picture  of Figure \ref{Fig-2}.
	 The grids  $H(m)$ are very sparsely distributed inside the $d$-cube 
		$$K(m): = \brab{\bx \in \RRd: \, |x_i| \le C2^{m/\lambda}, \ i =1,...,d},$$
		for some constant $C > 0$.
		Its diameter  which is the length of its symmetry axes is  $2C2^{m/\lambda}$, i.e.,  the size of $K(m)$. 
	%	The number of  sample points in $H(m)$ is  $|G(m)|\asymp  2^m m^{d - 1}$.

\appendix
\section{Appendix} \label{appendix}

For $\bp = (p_1,...,p_d) \in [1,\infty)^d$, we defined the mixed integral  norm 
$\|\cdot\|_{L_{\bp,w}(\RRd)}$ for functions on $\RRd$ as follows
\begin{equation*}
	\|f\|_{L_{\bp,w}(\RRd)}  
	\ := \
	\left( \int_{\RR} \left(\cdots \int_{\RR}\left(\int_{\RR}|f(\bx) w(\bx)|^{p_1} \rd x_1\right)^{p_2/p_1} 
	\rd x_2 \cdots \right)^{p_d/p_{d-1}} \rd x_d \right)^{1/p_d},
\end{equation*}
and put $\bone/{\bp} := (1/p_1,...,1/p_d)$. For $\bm \in \NNd_0$, denote by $\Pp_\bm$ the space of polynomials on $\RRd$ of degree at most $m_j$ in the variable $x_j$, $j =1,...,d$.
If ${\bp},{\bq} \in [1,\infty)^d$, from Lemma \ref{lemma:B-NInequality} one can deduce the
Nikol'skii's inequalities
\begin{equation} \label{ineq:Nikol'skii p<q}
	\|\varphi\|_{L_{\bq,w}(\RRd)} 
	\  \le \
	C 2^{(1 - 1/\lambda)|(\bone/{\bp} - \bone/{\bq})\bk|_1} \|f\|_{L_{\bp,w}(\RRd)} \ \ \ \forall \varphi \in \Pp_{2^\bk} \ \ \ ({\bp} < {\bq}),
\end{equation}
and
\begin{equation} \label{ineq:Nikol'skii p>q}
	\|\varphi\|_{L_{w,\bq}(\RRd)} 
	\  \le \
	C 2^{(1/\lambda)|(\bone/{\bq} - \bone/{\bp})\bk|_1} \|f\|_{L_{\bp,w}(\RRd)} \ \ \ \forall \varphi \in \Pp_{2^\bk}
 \ \ \ ({\bp}  > {\bq}),
 \end{equation}
with constants $C$ depending on $\lambda, {\bp},{\bq},d$ only, where we denote
$\bx \by:= (x_1y_1,...,x_d y_d)$.
%This inequality can be proven by  a generalization of 
%Jensen's inequality for mixed norms and the following equivalences of the 
%mixed integral  quasi-norm $\|\cdot\|_{\bp}$. 
%For all $k \in {\ZZ}^d_+$  and all $f \in \Sigma_r^d(k)$ of the form \eqref{def:StabIneq}, 
%\begin{equation*} 
%	\|f\|_{\bp} 
%	\ \asymp \ \prod_{i=1}^d 2^{- k_i/p_i} \|\{a_s\}\|_{{\bp},k}, 
%\end{equation*}
%where
%\begin{equation*}
%	\| \{a_s\} \|_{{\bp},k}
%	\ := \
%	\left( \sum_{s_d \in J(k_d)} \left(\cdots \sum_{s_2 \in J(k_2)}\left(\sum_{s_1 \in J(k_1)}
%	|a_s|^{p_1}\right)^{p_2/p_1}  \cdots \right)^{p_d/p_{d-1}} \right)^{1/p_d}.
%\end{equation*}

\begin{lemma}  \label{Lemma:IneqInt[varphi_k*varphi_s]}
	Let $1\le  p , q < \infty$, $p \not= q$ and $\tau := |1/2 - p/(p+q)|$. 
	Then there holds the inequality 
	\begin{equation*}  
		\int_{\RRd} |\varphi_\bk(\bx) \varphi_\bs(\bx) |^{q/2} w(\bx) \rd \bx
		\ \le \ 
		C S_\bk S_\bs 2^{- \tau |\bk - \bs|_1} \ \ \ 
		\forall \varphi_\bk  \in \Pp_{2^\bk},  \ \varphi_\bs  \in \Pp_{2^\bs},  \ \bk, \bs \in \NNd_0,
	\end{equation*}
	with some constant $C$ depending at most on $\lambda, p,q,d$, where
	\begin{equation*}  
		S_\bk := \ \left(2^{\delta_{\lambda,p,q}|\bk|_1} \|\varphi_\bk \|_{L_{p,w}(\RRd)}\right)^{q/2}.
	\end{equation*} 
\end{lemma}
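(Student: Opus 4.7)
The proof naturally rests on Hölder's inequality combined with the Nikol'skii-type inequalities \eqref{ineq:Nikol'skii p<q}--\eqref{ineq:Nikol'skii p>q}, with the exponent $\tau = |1/2 - p/(p+q)|$ emerging from a carefully chosen Hölder split adapted to the weight.

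First I would write $w = w^{p/(p+q)} \cdot w^{q/(p+q)}$ and decompose the integrand as
\begin{equation*}
|\varphi_\bk \varphi_\bs|^{q/2} w \;=\; \bigl(|\varphi_\bk|^{q/2} w^{p/(p+q)}\bigr) \cdot \bigl(|\varphi_\bs|^{q/2} w^{q/(p+q)}\bigr).
\end{equation*}
Applying Hölder's inequality with the conjugate exponents $(p+q)/p$ and $(p+q)/q$ --- calibrated precisely so that, after raising each factor to its Hölder exponent, the weight powers recombine to $w$ --- yields
\begin{equation*}
\int_{\RRd} |\varphi_\bk \varphi_\bs|^{q/2} w \;\le\; \|\varphi_\bk\|_{L_{q(p+q)/(2p),w}(\RRd)}^{q/2} \; \|\varphi_\bs\|_{L_{(p+q)/2,w}(\RRd)}^{q/2}.
\end{equation*}
When $p<q$ both auxiliary exponents $q(p+q)/(2p)$ and $(p+q)/2$ strictly exceed $p$, and each $L_{r,w}$-norm can be converted to the corresponding $L_{p,w}$-norm via \eqref{ineq:Nikol'skii p<q}; when $p>q$ both exponents fall strictly below $p$, and I would instead appeal to the dual inequality \eqref{ineq:Nikol'skii p>q}.

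To extract coordinate-wise decay $2^{-\tau|\bk-\bs|_1}$ rather than a bound symmetric in $|\bk|_1 + |\bs|_1$, I would perform this Hölder split \emph{coordinate-by-coordinate}: on those coordinates $j$ with $k_j \le s_j$, assign $\varphi_\bk$ to the factor carrying the larger exponent $q(p+q)/(2p)$ (so that $\varphi_\bk$ bears the heavier Nikol'skii loss on that coordinate), and reverse the assignment on coordinates with $k_j > s_j$. Formally this is realized by a single mixed-norm Hölder inequality with anisotropic exponents $\bp,\bp'$ whose components switch based on $\operatorname{sign}(k_j-s_j)$, followed by the mixed-norm Nikol'skii inequalities \eqref{ineq:Nikol'skii p<q}--\eqref{ineq:Nikol'skii p>q}.

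The main obstacle lies in the exponent bookkeeping. After combining the Nikol'skii factors with the compensating factor $2^{\delta_{\lambda,p,q}(|\bk|_1 + |\bs|_1)q/2}$ absorbed into $S_\bk S_\bs$, one must verify that the residual coordinate-$j$ exponent collapses to exactly $-\tau|k_j - s_j|$. The key algebraic cancellation is that the $L_{(p+q)/2}$-to-$L_p$ Nikol'skii exponent $(1/p - 2/(p+q))$ and the $L_{q(p+q)/(2p)}$-to-$L_p$ Nikol'skii exponent $(1/p - 2p/(q(p+q)))$ average, after multiplication by $q/2$, to exactly $\delta_{p,q} \cdot q/2$, while their difference yields $2\tau$; thus the symmetric part is absorbed into $S_\bk S_\bs$ and the antisymmetric part produces the claimed coordinate-wise decay $2^{-\tau|k_j - s_j|}$. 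Verifying this identity and checking that the symmetric computation in the regime $p>q$, where the Nikol'skii factor involves $1/\lambda$ in place of $1-1/\lambda$, yields the same $\tau$, is the delicate point.
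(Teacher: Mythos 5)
Your proposal follows essentially the same route as the paper's proof: a coordinate-wise (mixed-norm) H\"older split with conjugate exponents $(p+q)/p$ and $(p+q)/q$, switched according to $\sign(k_j-s_j)$, followed by the mixed-norm Nikol'skii inequalities \eqref{ineq:Nikol'skii p<q}--\eqref{ineq:Nikol'skii p>q}; your auxiliary exponents $q(p+q)/(2p)$ and $(p+q)/2$ are exactly the paper's $\bu$ and $\bu'$. The only caveat is the bookkeeping you flag yourself: carrying it out gives an antisymmetric exponent $(1/\lambda)\tau$ for $p>q$ (resp.\ $(1-1/\lambda)\tau$ for $p<q$) rather than $\tau$ itself, but since any positive multiple of $|\bk-\bs|_1$ suffices for Lemma \ref{lemma:IneqL_qNorm<L_pNorm}, this discrepancy --- which the paper's own proof also leaves unexamined --- is harmless.
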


\begin{proof} We prove the lemma for the case $p > q$. 
	It can be proven in a similar way with some modification for the $p < q$.
	Let $\nu := (p + q)/p$. Then $\tau = 1/\nu - 1/2 $ and $1< \nu < 2$.
	Put $\nu':= \nu/(\nu - 1)$, we have $1/\nu + 1/\nu' = 1$ and $1 < \nu' < 2 $.
	Let ${\bu}, {\bv} \in (1,\infty)^d$ be defined by ${\bu} := q{\bv}/2$ and 
	$v_i = \nu$ if $k_i <  s_i$, and $v_i = \nu'$ if $k_i \ge s_i$ for $i= 1,...,d$.  
	Let ${\bu}'$ and ${\bv}'$ be given by $\bone/{\bu}+ \bone/{\bu}' = \bone$
	and $\bone/{\bv}+ \bone/{\bv}' = \bone$, respectively.
	Notice that ${\bv} \in (1,\infty)^d$.
	Applying the H\"older  inequality for the mixed norm $\|\cdot\|_{L_{w,\bv}(\RRd)}$, we obtain 
	\begin{equation}  \label{ineq:Int[varphi_k*varphi_s]}
		\begin{split}			
		\int_{\RRd} |\varphi_\bk(\bx) \varphi_\bs(\bx)|^{q/2} w(\bx)\rd \bx
		\ &\le \ 
		\||\varphi_\bk|^{q/2}\|_{L_{\bv,w}(\RRd)} \||\varphi_\bs|^{q/2}\|_{L_{\bv',w}(\RRd)} \\
		\ &= \ 
	\|\varphi_\bk\|_{L_{\bu,w}(\RRd)}^{q/2} \|\varphi_\bs\|_{L_{\bu',w}(\RRd)}^{q/2}.	
\end{split}
	\end{equation}
	Since ${\bu} < p{\bf 1}$ and ${\bu}' < p{\bf 1}$, by inequality \eqref{ineq:Nikol'skii p>q} we have
	\begin{equation} \label{ineq:Norms[varphi_k,varphi_s]}
		\begin{split}
			\|\varphi_\bk\|_{L_{\bu,w}(\RRd)}
		\  &\le \
		2^{(1/\lambda)|(\bone/{\bu} - \bone/p)\bk|_1} \|\varphi_\bk\|_{L_{p,w}(\RRd)}, \\
		\|\varphi_\bs\|_{L_{\bu',w}(\RRd)}
		\  &\le \
		2^{(1/\lambda)|(\bone/{\bu'} - \bone/p)\bs|_1} \|\varphi_\bs\|_{L_{p,w}(\RRd)}.  
\end{split}
	\end{equation}
	From \eqref{ineq:Int[varphi_k*varphi_s]} and 
	\eqref{ineq:Norms[varphi_k,varphi_s]} we prove the lemma.
	\hfill
\end{proof}

\begin{lemma}  \label{lemma:IneqL_qNorm<L_pNorm}
	Let $1 \le  p, q < \infty$, $p \not= q$ and  
	$f \in L_{q,w}(\RRd)$ be represented by  the  series 
	\begin{equation*} 
		f \ = \sum_{\bk \in \NNd_0} \ \varphi_\bk, \ \varphi_\bk \in \Pp_{2^\bk}
	\end{equation*}
	converging in $L_{q,w}(\RRd)$.
	Then there holds the inequality 
	\begin{equation} \label{ineq:IneqL_qNorm<L_pNorm}
		\|f\|_{L_{q,w}(\RRd)}
		\ \le C \left( \sum_{k \in {\ZZ}^d_+} \ \|2^{\delta_{\lambda,p,q}|\bk|_1}  \varphi_\bk \|_{L_{p,w}(\RRd)}^q \right)^{1/q}, 
	\end{equation}
	with some constant $C$ depending at most on $\lambda,p,q,d$, whenever the right side is finite. 
\end{lemma}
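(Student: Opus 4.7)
The strategy is the classical Bernstein--Nikol'skii summation (or weighted Littlewood--Paley) argument, with the bilinear almost-orthogonality estimate of the preceding Lemma~\ref{Lemma:IneqInt[varphi_k*varphi_s]} as the main input. The proof splits naturally into the cases $q \ge 2$ and $1 \le q < 2$.

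For $q \ge 2$, I would linearize the $q$-th power of the norm by passing to $|f|^2 = \sum_{\bk, \bs \in \NNd_0} \varphi_\bk \overline{\varphi_\bs}$ and re-expressing $\|f\|_{L_{q,w}(\RRd)}^2$ as a $q/2$-integral of $|f|^2$ against the appropriate power of $w$. Because $q/2 \ge 1$, the resulting functional is a norm and the triangle inequality reduces the estimate to a double sum $\sum_{\bk,\bs}$ of bilinear norms, each of which --- up to the correct exponent --- is exactly the integral controlled by the preceding lemma by $C\,S_\bk S_\bs\,2^{-\tau|\bk-\bs|_1}$. Since $p \ne q$ we have $\tau > 0$, so the kernel $2^{-\tau|\bk-\bs|_1}$ has row- and column-sums uniformly bounded over $\bk-\bs \in \ZZd$; Schur's test (equivalently, Young's inequality for convolution with an $\ell^1$-sequence on $\ZZd$) therefore collapses the double sum to $C \sum_\bk S_\bk^2$. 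Unwinding the definition of $S_\bk$ delivers \eqref{ineq:IneqL_qNorm<L_pNorm}.

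For $1 \le q < 2$ the triangle inequality at the $L_{q/2}$ level is unavailable, so I would argue by duality: write $\|f\|_{L_{q,w}(\RRd)}$ as a supremum over unit vectors $g$ in the appropriate dual weighted space of pairings $\int f g \, w^\alpha \, d\bx$ for the correct weight exponent $\alpha$, apply a polynomial approximation to $g$, and reduce to a dual problem with conjugate exponent $q' > 2$ that has already been treated in the first case (with the roles of $p$ and $q$ swapped and $\delta_{\lambda,p,q}$ replaced by its dual counterpart). The principal technical obstacle throughout is the careful bookkeeping of powers of the weight $w$: the integral in the preceding lemma carries only a single factor of $w$, whereas the target norm $\|\cdot\|_{L_{q,w}}^q$ carries $w^q$, and the intermediate mixed-norm H\"older splittings redistribute fractional powers of $w$ that must be matched exactly against the scaling $2^{\delta_{\lambda,p,q}|\bk|_1}$ absorbed into $S_\bk$. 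This alignment of weight exponents --- mirroring the mixed-norm construction in the proof of Lemma~\ref{Lemma:IneqInt[varphi_k*varphi_s]} --- is the delicate step; once it is in place, the Schur-test summation and the case reduction are routine.
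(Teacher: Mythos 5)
Your overall architecture (feed everything through the bilinear almost\-/orthogonality estimate of Lemma~\ref{Lemma:IneqInt[varphi_k*varphi_s]}, then sum) is in the same spirit as the paper's, and your remark about the powers of $w$ is well taken: the single factor $w$ in the statement of Lemma~\ref{Lemma:IneqInt[varphi_k*varphi_s]} must consistently be read as $w^q$, i.e.\ the integrand is $|\varphi_\bk w\,\varphi_\bs w|^{q/2}$, as that lemma's own proof shows. However, your summation step has a genuine gap. In the case $q\ge 2$, the triangle inequality in $L_{q/2}$ gives
$\|f\|_{L_{q,w}(\RRd)}^2\le\sum_{\bk,\bs}\bigl(\int_{\RRd}|\varphi_\bk\varphi_\bs|^{q/2}w^q\,\rd\bx\bigr)^{2/q}\le C\sum_{\bk,\bs}a_\bk a_\bs\,2^{-(2\tau/q)|\bk-\bs|_1}$, where $a_\bk:=2^{\delta_{\lambda,p,q}|\bk|_1}\|\varphi_\bk\|_{L_{p,w}(\RRd)}$; the exponent $2/q$ is unavoidable because the $L_{q/2}$ triangle inequality applies to the norm, not to its $q/2$-th power. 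Schur's test then collapses the double sum to $C\sum_\bk a_\bk^2$, \emph{not} to $C\sum_\bk S_\bk^2=C\sum_\bk a_\bk^q$ as you assert. For $q>2$ the resulting bound $\|f\|\le C(\sum_\bk a_\bk^2)^{1/2}$ is strictly weaker than the asserted $\|f\|\le C(\sum_\bk a_\bk^q)^{1/q}$, and the stronger bound is genuinely out of reach of the bilinear route: the sequence inequality $\sum_{\bk,\bs}a_\bk a_\bs 2^{-c|\bk-\bs|_1}\le C\|a\|_{\ell^q}^2$ is false for $q>2$ (take $a_\bk=1$ on a box of $N$ indices: the left-hand side is $\asymp N$ while the right-hand side is $\asymp N^{2/q}$). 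The weaker $\ell^2$ version would also degrade the logarithmic exponent $(d-1)/q$ in Lemma~\ref{lemma:I_xi-error} and Theorem~\ref{thm:rho_n(W)} to $(d-1)/2$.

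The reduction of $1\le q<2$ to the dual case is also not routine: the spaces $\Pp_{2^\bk}$ are nested rather than mutually orthogonal, so after writing $\int fg\,w^q\,\rd\bx=\sum_\bk\int\varphi_\bk g\,w^q\,\rd\bx$ and applying H\"older on the sum you would need a dual square-function estimate for an arbitrary test function $g$ against blocks matched to the $\varphi_\bk$, and no such projection of $g$ is available here. The paper sidesteps both difficulties with a single argument valid for all $1\le q<\infty$: set $n=[q]+1$, use subadditivity of $t\mapsto t^{q/n}$ to get $|f|^q\le\sum_{\bk^1,\dots,\bk^n}\prod_{j}|\varphi_{\bk^j}|^{q/n}$, rewrite each product over ordered pairs via $\prod_j c_j=\bigl(\prod_{i\ne j}c_ic_j\bigr)^{1/(2(n-1))}$, apply H\"older to the resulting $n(n-1)$ factors so that each one is exactly the bilinear integral of Lemma~\ref{Lemma:IneqInt[varphi_k*varphi_s]}, and then a final H\"older over the $n$ summation indices together with $\sum_{\bs}2^{-\eta\tau|\bk-\bs|_1}\le C$ yields $\|f\|_{L_{q,w}(\RRd)}^q\le C\sum_\bk S_\bk^2=C\sum_\bk a_\bk^q$ directly. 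To salvage your plan you would have to replace the bilinear expansion by this $n$-linear device (or an equivalent one); the Schur test alone cannot produce the $\ell^q$ sum.
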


\begin{proof} 
	It is sufficient to show the inequality \eqref{ineq:IneqL_qNorm<L_pNorm} for $f$ of the form
	\begin{equation*} 
		f \ = \sum_{\bk \le \bm} \ \varphi_\bk, \ \varphi_\bk \in \Pp_{2^\bk},
	\end{equation*} 
	for any $\bm \in \NNd_0$. We will prove this  for the case $1 \le  q < p < \infty$.   The case $1 \le  p < q < \infty$ can be proven analogously.
	
	Put $n:= [q] + 1$. Then $0 < q/n \le 1$. By the Jensen  inequality we have
	\begin{equation*}
		\begin{aligned}  
			\left|\sum_{\bk \le \bm} \ \varphi_\bk(\bx)\right|^q
			\ & = \
			\left(\left|\sum_{\bk \le \bm} \ \varphi_\bk(\bx)\right|^{q/n}\right)^n \\
			\  \le \
			\left(\sum_{\bk \le \bm} \ |\varphi_\bk(\bx)|^{q/n}\right)^n 
			\ & = \
			\sum_{\bk^1 \le m} \cdots  \sum_{\bk^n \le m}\ \prod_{j=1}^n |\varphi_{\bk^j}(\bx)|^{q/n}. \\  
		\end{aligned}
	\end{equation*}
	and therefore,
	\begin{equation} \label{ineq:[|f|_q^q](1)}
		\|f\|_{L_{q,w}(\RRd)}^q
		\  \le \
		\sum_{\bk^1 \le \bm} \cdots  \sum_{\bk^n \le \bm}\ \int_{\RRd} \ 
		\prod_{j=1}^n |\varphi_{\bk^j}(\bx)|^{q/n} w(\bx) \rd \bx.   
	\end{equation}
From the identity
	\begin{equation} \label{ineq:prod_{j=1}^n a_j}
		\prod_{j=1}^n a_j 
		\ = \
		\left( \prod_{i \ne j} a_i a_j \right)^{1/2(n-1)} 
	\end{equation}
	for non-negative numbers $a_1,..., a_n$, we obtain
	\begin{equation*} 
		\Ii := \ \int_{\RRd} \ \prod_{j=1}^n |\varphi_{\bk^j}(\bx)|^{q/n} w(\bx) \rd \bx
		\ = \ 
		\int_{\RRd} \prod_{i \ne j}|\varphi_{\bk^i}(\bx)\varphi_{\bk^j}(\bx)|^{q/2n(n-1)} w(\bx) \rd \bx.   
	\end{equation*}
	Hence, applying the H\"older inequality to $n(n-1)$ functions 
	in the right side of the last equality, Lemma \ref{Lemma:IneqInt[varphi_k*varphi_s]} and 
	\eqref{ineq:prod_{j=1}^n a_j} give
	\begin{equation*}
		\begin{aligned}  
			\Ii
			\ & \le \ 
			\prod_{i \ne j} \left( \int_{\RRd} |\varphi_{\bk^i}(\bx)\varphi_{\bk^j}(\bx)|^{q/2} w(\bx) \rd \bx \right)^{1/n(n-1)} 
			\  \le \ 
			\prod_{i \ne j} \left( S_{\bk^i} S_{\bk^j} 2^{- \tau |\bk^i - \bk^j|_1} \right)^{1/n(n-1)} \\   
			\ & = \ 
			\prod_{i \ne j} \left( S_{\bk^i} S_{\bk^j} \right)^{1/n(n-1)}
			\left\{\left( \prod_{i \ne j}\prod_{i' = 1}^n 2^{- \tau |\bk^i - \bk^{i'}|_1}
			\prod_{j' = 1}^n 2^{- \tau |\bk^j - \bk^{j'}|_1}\right)^{1/2(n-1)}\right\}^{1/n(n-1)} \\  
			\ & = \ 
			\left\{\prod_{i \ne j} \left( S_{\bk^i} S_{\bk^j} \right)^{2/n}
			\left( \prod_{i' = 1}^n 2^{- \tau |\bk^i - \bk^{i'}|_1}
			\prod_{j' = 1}^n 2^{- \tau |\bk^j - \bk^{j'}|_1}\right)^{1/n(n-1)}\right\}^{1/2(n-1)} \\
			\ & = \ 
			\prod_{j = 1}^n S_{\bk^j}^{2/n}
			\left(\prod_{i = 1}^n 2^{- \tau |\bk^j - \bk^i |_1}\right)^{1/n(n-1)} 
			\  = \ 
			\left(\prod_{j = 1}^n S_{\bk^j}^2
			\prod_{i = 1}^n 2^{- \eta \tau |\bk^j - \bk^i |_1}\right)^{1/n},  
		\end{aligned}  
	\end{equation*}
	where $\eta := \tau/(n-1) > 0$. (Here and below we use the notation in Lemma \ref{Lemma:IneqInt[varphi_k*varphi_s]}.)
	Therefore, from \eqref{ineq:[|f|_q^q](1)} and the H\"older inequality we obtain
	\begin{equation} \label{ineq:[|f|_q^q](2)} 
		\begin{aligned}  
			\|f\|_{L_{q,w}(\RRd)}^q
			\ & \le \
			\sum_{\bk^1 \le \bm} \cdots  \sum_{\bk^n \le \bm}\ 
			\left(\prod_{j = 1}^n S_{\bk^j}^2
			\prod_{i = 1}^n 2^{- \eta \tau |\bk^j - \bk^i |_1}\right)^{1/n} \\
			\ & \le \
			\prod_{j = 1}^n 
			\left(\sum_{\bk^1 \le \bm} \cdots  \sum_{\bk^n \le \bm}\ 
			S_{\bk^j}^2
			\prod_{j = 1}^n 2^{- \eta \tau |\bk^j - \bk^i |_1}\right)^{1/n} 
			\ =: \ \prod_{j = 1}^n B_j.   
		\end{aligned}  
	\end{equation}
	We have
	\begin{equation*}
		\begin{aligned}  
			B_j 
			\ & = \
			\sum_{\bk^j \le \bm} \ S_{\bk^j}^2 
			\sum_{\bk^1 \le \bm} \cdots  \sum_{\bk^{j-1} \le \bm} \ \sum_{\bk^{j+1} \le \bm} \cdots  \sum_{\bk^n \le \bm} 
			\prod_{i = 1}^n 2^{- \eta \tau |\bk^j - \bk^i |_1} \\
			\ & = \
			\sum_{\bk^j \le \bm} \ S_{\bk^j}^2 \left( \sum_{\bs \le \bm}  
			2^{- \eta \tau |\bk^j - \bs|_1} \right)^{n-1}
			\ \le \
			C \sum_{\bk^j \le \bm} \ S_{\bk^j}^2.
		\end{aligned}  
	\end{equation*}
	Using the last bound for $B_j$, continuing the estimates \eqref{ineq:[|f|_q^q](2)} we  finish the proof of the lemma:
	\begin{equation*}
		\begin{aligned}  
			\|f\|_{L_{q,w}(\RRd)}^q 
			\ & \le \
			\prod_{j = 1}^n B_j^{1/n} 
			\ \le \
			C \sum_{\bk \le \bm} \ S_\bk^2 \\
			\ & = \
			C \sum_{\bk \le \bm} \ \|2^{\delta_{\lambda,p,q}|\bk|_1} \varphi_\bk \|_{L_{p,w}(\RRd)}^q .
		\end{aligned}  
	\end{equation*}
\hfill
\end{proof}

In the proofs of Lemma \ref{lemma:IneqL_qNorm<L_pNorm}, we have used a modification of a technique employed in \cite{Tem85} and \cite{Dung11a}  in the proofs  of a trigonometric version ($1 \le p < q < \infty$)  and of  a B-spline version ($0 < p < q < \infty$) of the results.

\medskip
\noindent
{\bf Acknowledgments:} 
This work is funded by the Vietnam National Foundation for Science and Technology Development (NAFOSTED) in the frame of the NAFOSTED-SNSF Joint Research Project under  
Grant IZVSZ2$_{ - }$229568.  
 A part of this work was done when  the author was working at the Vietnam Institute for Advanced Study in Mathematics (VIASM). He would like to thank  the VIASM  for providing a fruitful research environment and working condition. The author  gratefully acknowledges the referees for valuable comments and remarks that significantly improved the presentation of this paper.
 
	%%%%%%%%%%%%%%%%%%%%%%%%%%%%%%%%%%
	%%%%%%%%%%%%%%%%%%%%%%%%%%%%%%%%%%
\bibliographystyle{abbrv}
\bibliography{WeightedSampling}
\end{document}